\documentclass[12pt]{amsart}

\usepackage{amsmath,amsfonts,amssymb,amsthm}
\usepackage[mathscr]{eucal}
\usepackage{enumitem,kantlipsum}
\usepackage{color}
\usepackage{datetime2}
\usepackage[dvipsnames]{xcolor}
\usepackage{graphicx}
\usepackage[abs]{overpic}
\usepackage{caption}
\usepackage{float}
\usepackage{marginnote}
\usepackage[normalem]{ulem}
\usepackage[numbers]{natbib}
\setlength{\headheight}{0cm}
\setlength{\headsep}{1cm}
\addtolength{\textheight}{0.5cm}
\addtolength{\textwidth}{-.5cm}
\addtolength{\oddsidemargin}{-.4cm}
\renewcommand{\baselinestretch}{1.1}

\makeatletter
\@namedef{subjclassname@2020}{\textup{2020} Mathematics Subject Classification}
\makeatother

\makeatletter
\def\l@section{\@tocline{1}{0pt}{0pc}{}{}}
\def\l@subsection{\@tocline{2}{0pt}{0pc}{0em}{}}
\renewcommand{\tocsubsection}[3]{%
  \indentlabel{\@ifnotempty{#2}{\hspace*{2em}\makebox[2.2em][l]{%
    \ignorespaces#1 #2.\hfill}}}#3}
\makeatother

\newcommand{\no}{\nonumber}

\definecolor{blue}{RGB}{3, 8, 135}
\definecolor{red}{RGB}{188, 55, 84}

\RequirePackage[colorlinks=true, linkcolor = red,citecolor=blue,urlcolor=blue,backref=page]{hyperref}

\numberwithin{equation}{section}
\theoremstyle{definition}
\newtheorem{definition}{Definition}[section]
\theoremstyle{definition}

\newtheorem*{ntn}{Notation}

\newtheorem{remark}[definition]{Remark}

\theoremstyle{plain}
\newtheorem{theorem}[definition]{Theorem}

\newtheorem{lemma}[definition]{Lemma}
\newtheorem{cor}[definition]{Corollary}
\newtheorem{Prop}[definition]{Proposition}
\newtheorem{conjecture}[definition]{Conjecture}

\newcommand{\nc}{\newcommand}
\nc{\I}{{\mathbf 1}}
\newcommand{\remove}[1]{}
\nc{\bN}{{\mathbf N}}
\nc{\cB}{{\mathcal B}}
\nc{\cF}{{\mathcal F}}
\nc{\cX}{{\mathcal X}}
\nc{\cS}{{\mathcal S}}
\nc{\cC}{{\mathcal C}}
\nc{\cP}{{\mathcal P}}
\nc{\cN}{{\mathcal N}}
\nc{\R}{{\mathbb R}}

\nc{\E}{{\mathbb E}}
\nc{\C}{{\mathbb C}}
\newcommand{\Cd}{\mathbb{C}^d}
\nc{\N}{{\mathbb N}}
\nc{\Z}{{\mathbb Z}}
\nc{\BX}{{\mathbb X}}
\nc{\BM}{{\mathbb M}}
\nc{\BY}{{\mathbb Y}}
\nc{\bfW}{{\mathbf W}}
\nc{\bfY}{{\mathbf Y}}
\nc{\bfZ}{{\mathbf Z}}
\nc{\bx}{\mathbf{x}}
\nc{\bd}{\partial}
\def\1{\mathbf{1}}
\newcommand{\md}{\text{d}}
\nc{\BP}{\mathbb{P}}
\nc{\BE}{\mathbb{E}}
\nc{\BQ}{\mathbb{Q}}
\newcommand{\eps}{\varepsilon}
\newcommand{\beas}{\begin{eqnarray*}}
\newcommand{\eeas}{\end{eqnarray*}}
\newcommand{\bes} {\begin{equation*}}
\newcommand{\ees} {\end{equation*}}
\newcommand{\be} {\begin{equation}}
\newcommand{\ee} {\end{equation}}
\newcommand{\bea} {\begin{eqnarray}}
\newcommand{\eea} {\end{eqnarray}}
\newcommand{\bdy}{\partial}

\newcommand{\GL}{\operatorname{GL}}
\newcommand{\wt}{\widetilde}
\newcommand{\M}{\mathcal{M}}
\newcommand{\tr}{{\operatorname{T}}}
\newcommand{\rl}{\mathbb{R}}

\newcommand{\cont}{\mathcal{C}}

\newcommand{\de} {\delta}
\newcommand{\bde}{\boldsymbol\de}
\newcommand{\bbet}{{\boldsymbol\beta}}
\newcommand{\rea}{\operatorname{Re}}
\newcommand{\ima}{\operatorname{Im}}
\newcommand{\zbar}{\overline{z}}


\DeclareMathOperator{\BV}{{\mathbb Var}}
\DeclareMathOperator{\BC}{{\mathbb Cov}}

\newcommand\smpartl[2]{\frac{\partial{#1}}{\partial{#2}}}
\newcommand\partl[2]{\dfrac{\partial{#1}}{\partial{#2}}}
\newcommand\smsecpartl[3]{\frac{\partial^2{#1}}{\partial{#2}\text{ }\partial{#3}}}
\newcommand\secpartl[3]{\dfrac{\partial^2{#1}}{\partial{#2}\partial{#3}}}

\newcommand{\balp}{{\boldsymbol\alpha}}
\newcommand{\zt}{\zeta}

\newcommand{\gam}{\gamma}

\newcommand{\Om}{\Omega}

\setlength{\topmargin}{-0.05in} 
\setlength{\oddsidemargin}{0in}
\setlength{\evensidemargin}{0in}
\setlength{\textheight}{8.7in} \setlength{\textwidth}{6.5in} 

\tolerance=1
\emergencystretch=\maxdimen
\hyphenpenalty=10000
\hbadness=10000

\makeatletter
\g@addto@macro\bfseries{\boldmath}
\makeatother

\begin{document} 
\title{Volume Approximation of Strongly  $\C$-Convex Domains by Random Polyhedra} 
\author{Siva Athreya}
\address{Theoretical Statistics and Mathematics Unit,  Indian Statistical Institute,  Bangalore.}
\email{athreya@isibang.ac.in}
\author{Purvi Gupta}
\address{Department of Mathematics, Indian Institute of Science, Bangalore}
\email{purvigupta@iisc.ac.in}
\author{D. Yogeshwaran}
\address{Theoretical Statistics and Mathematics Unit,  Indian Statistical Institute,  Bangalore.}
\email{d.yogesh@isibang.ac.in}

\keywords{ Strong $\C$-convexity,  random polyhedra,  Poisson process,  binomial process,   volume approximation,  optimal approximation,  limit theorems,  normal approximation. }
\subjclass[2020]{60D05,  
32F17,  	
 60G55,   	
60F05
}


\begin{abstract}
Polyhedral-type approximations of convex-like domains in $\C^d$ have been considered recently by the second author. In particular, the decay rate of the error in optimal volume approximation as a function of the number of facets has been obtained.  In this article,  we take these studies further by investigating polyhedra constructed using random points (Poisson or binomial process) on the boundary of a strongly $\C$-convex domain.  We determine the rate of error in volume approximation of the domain by random polyhedra, and conjecture the precise value of the minimal limiting constant.   Analogous to the real case,  the exponent appearing in the error rate of random volume approximation coincides with that of optimal volume approximation, and can be interpreted in terms of the Hausdorff dimension of a naturally-occurring metric space. Moreover, the limiting constant is conjectured to depend on the {\em M{\"o}bius-Fefferman measure}, which is a complex analogue of the Blaschke surface area measure. Finally, we also prove $L^1$-convergence, variance bounds, and normal approximation.  
\end{abstract}

\maketitle

\renewcommand{\baselinestretch}{0.8}\normalsize
         {\hypersetup{ linktocpage=true, linkcolor=blue}\small
               \tableofcontents}   \renewcommand{\baselinestretch}{1.1}\normalsize 

\section{Introduction}\label{S:intro}

Approximation of convex bodies by (convex) polyhedra is an important topic in convex geometry; see \citet{Gr93} and \citet{bronstein2008approximation}.  Motivated by both theoretical and practical considerations, this has led to numerous studies of approximation of convex bodies by random polyhedra; see \citet{reitzner2010random} and \citet{hug2013random}.  A fascinating feature of such studies is that for both deterministic and random approximations, the asymptotic rates and leading constants are rather well understood.  For instance, the least possible volume discrepancy between a convex body $K\subset\rl^d$ and the convex hull of $n$ points on its boundary decays like $v^{\sharp}_{\rl}(K)n^{-2/(d-1)}$, where the limiting constant $v^{\sharp}_{\rl}(K)$ is given in terms of the Blaschke affine surface area measure of $K$.  This was shown by \citet{mcclure1975polygonal} for $d =2$, and by \citet{GruasymptoticII} for all $d$.  On the other hand, the least possible volume discrepancy between $K$ and the convex hull of $n$ i.i.d.  random points chosen according to a continuous density on its boundary decays like $v^*_{\rl}(K)n^{-2/(d-1)}$, where $v^*_{\rl}(K)$ differs from $v^{\sharp}_\rl(K)$ only by a dimensional constant. Here, ``least" is to be understood as over all possible continuous densities on the bounday.  This was proven by \citet{schneider1988random} in the case of $d = 2$,  \citet{muller1990approximation} for balls in all dimensions, and by \citet{schutt2000random} and \citet{reitzner2002random} for general convex domains (under different regularity assumptions). Similar behaviour has also been observed for circumscribed polytopes in \cite{GruasymptoticII} and random circumscribed polytopes in \citet{boroczky2004approximation}.

While there are several notions of convexity in several complex variables,  the asymptotic analysis of polyhedral approximations in the complex setting is rather underexplored.  Deterministic volume approximations for two different classes of (convex-like) domains in $\Cd$ were investigated by the second author in \cite{Gu17} and \cite{Gu21}.  In both cases, it was either shown or conjectured that for a domain $D\subset\Cd$,  the error in volume approximation decays like $v^{\sharp}_\C(D)n^{-1/d}$, where $v^{\sharp}_\C(D)$ depends on complex analogues of the Blaschke surface area, i.e., the Fefferman hypersurface measure in the former case, and the M{\"o}bius--Fefferman hypersurface measure in the latter case. In this article, we initiate the study of random polyhedral approximations in the complex setting by exploring random analogues of the approximation considered in \cite{Gu21}. We determine the rate of approximation, and make a conjecture for the limiting constant; see Page \hyperlink{page.7}{7} for an an overview of the results. 

\subsubsection*{\bf Organization of the paper.} For the convenience of the reader, we summarize the central objects and the main results of this paper in Section~\ref{SS:Summary}.  We also provide some motivation and context, as well as describe the key proof ideas in this section.  Section~\ref{s:modelmain} expands on Section~\ref{SS:Summary} and contains the statements of the main results of this paper. The underlying domains and polyhedral sets are defined in detail in Section~\ref{SS:DomPoly}. The main results are stated in Section~\ref{s:poissonrandpoly}.  We end Section~\ref{s:modelmain} with some more remarks on the main theorems (Section~\ref{s:remltthms}). The proofs of the main results are spread over Sections~\ref{S:GeomEst} and ~\ref{s:proofsmain}. In the former, we collect some recurring notation and all the geometric (deterministic) ingredients needed for the proofs. Finally, the main (probabilistic) proofs are in Section~\ref{s:proofsmain}, where each subsection is devoted to the proof of each main result. In Appendix \ref{s:probtool}, we recall some well-known results on Poisson and binomial processes that are used in our proofs.  In Appendices \ref{SS:curvature} and \ref{SS:Models}, we collect all the background material on the geometry of $\C$-convex domains that plays a role in Section~\ref{S:GeomEst}. In an attempt to make the article accessible to both probabilists as well as complex analysts, we have provided more background details than is customary in either of the two fields. 

\subsubsection*{\bf Convention on constants.} Throughout the paper, $c$ and $C$  will denote generic positive constants whose values may change from line to line. The dependence of these constants on specific parameters of the problem will be indicated via subscripts as in $C_{g,D}, C_D$, etc.

\subsection{Context and Contribution}
\label{SS:Summary}

Convex geometry has had fruitful interactions separately with probability and several complex variables. We envisage that the joint interaction between the three subjects could be equally exciting.  Apart from mere mathematical curiosity, the motivation behind considering $\C$-convexity is to enlarge the class of convex-like domains for which approximation problems can be studied. In geometric terms, $\C$-convexity is the natural analogue of classical convexity in complex projective geometry. Although there is a developing literature on random polytopes in non-Euclidean geometry (see, for example, \citet{besau2020asymptotic,besau2020random}),  our work is fundamentally different in that the underlying notion of convexity itself is non-Euclidean (complex projective). 

\subsubsection*{\bf Strongly $\C$-convex domains.} Recall that for a $\cont^2$ domain $K\subset\rl^d$, convexity and strong convexity are characterized by the semi-positive definiteness and positive definiteness, respectively, of the second fundamental form of $bK$, the boundary of $K$ (viewed as a smooth manifold), at all points of $bK$. Analogous to this, a $\cont^2$ domain $D\subset\Cd$ is {\em $\C$-convex} ({\em strongly $\C$-convex}), if for all $w\in bD$, 
the restriction of the second fundamental form of $bD$ to  $\mathcal H_wbD$ is semi-positive definite (positive definite), where $\mathcal H_wbD$ is the maximal complex subspace of the real tangent space of $bD$ at $w$ when viewed as a subspace of $\Cd$. We refer to $\mathcal H_wbD$ as  the {\em complex tangent space} to $bD$ at $w$. In this paper, we work with bounded strongly $\C$-convex domains in $\Cd$. However, for the sake of convenience and illustration, the examples given in Figures \ref{F:saddle}, \ref{F:hyperbola} and \ref{F:parabola} are of unbounded strongly $\C$-convex domains. The planar case ($d=1$) appears to be simpler since complex tangent spaces reduce to points, so there is no complex geometry to speak of; compare Figure \ref{F:d=1} ($d=1$) and Figures \ref{F:hyperbola} and \ref{F:parabola} ($d=2$). Thus, the case $d = 1$ calls for a separate investigation, and we exclusively deal with the case $d>1$ in this paper. 

\begin{figure}
\begin{center} 
\begin{overpic}[grid=false,tics=10,scale=0.5]{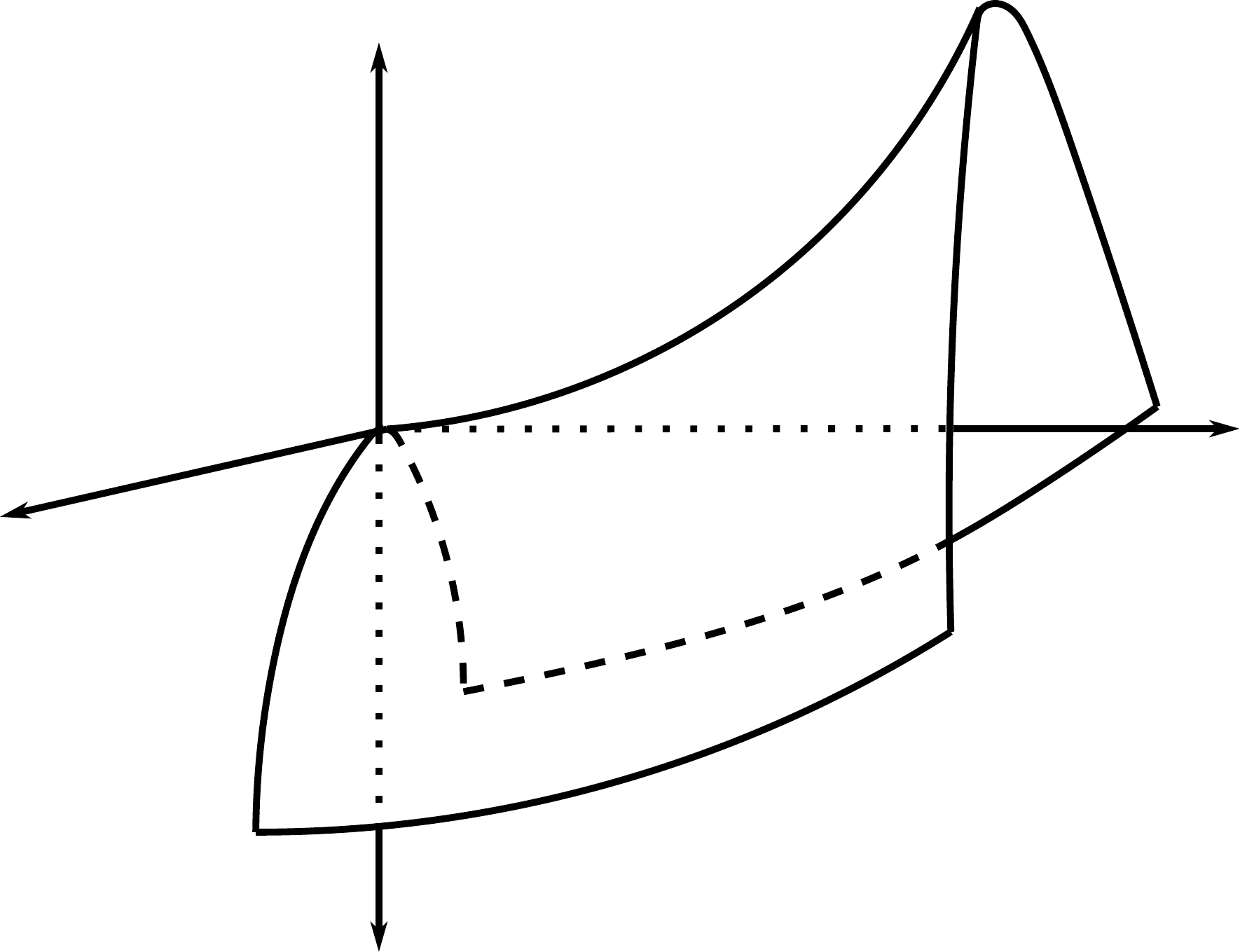} 
\put(240,90){ $|z_1|$}
\put(5,80){ $x_2$}
\put(83,172){$y_2$}
\put(200,58){$bD$}
\end{overpic}
\medskip
\caption{A non-convex strongly $\C$-convex (unbounded) domain in $\C^2$: $D=\{y_2> |z_1|^2-x_2^2\}$.}\label{F:saddle}
\end{center}

\end{figure}

We postpone a broader discussion on $\C$-convexity to Section~\ref{SS:DomPoly}, but note that (real) convex sets in $\Cd\cong\rl^{2d}$ are always $\C$-convex but the converse is not true; see Figure \ref{F:saddle}. While real convexity (and the class of real convex polyhedra) is invariant under real affine transformations,  $\C$-convexity is invariant under the (larger) group of complex projective transformations, making it a suitable notion of convexity for complex  projective geometry. From this point of view, we shortly introduce a class of polyhedral sets that is invariant under the intermediate group of complex affine transformations. In complex analysis, $\C$-convexity plays a significant role in questions of duality, interpolation, integral representations of holomorphic functions and metric geometry; see \citet{APR}, \citet{Ba16}, \citet{LaSt} and \citet{Le1984}, for instance. 
  
\subsubsection*{\bf Induced polyhedra.} In the real setting, given a finite set $\varphi=\{w^1,...,w^n\}$ on the boundary of a real convex domain $K\subset\rl^d$, the simplest construction of a convex polyhedron is to take the convex hull $P_{\text{i}}(\varphi)$ of the point set $\varphi$, i.e., $P_{\text{i}}(\varphi)$ is the intersection of all the convex sets containing $\varphi$. In the complex setting, since the intersection of two $\C$-convex sets need not be $\C$-convex, there is no notion of a ``$\C$-convex hull"; see \cite[Section~2.2]{APR}. A more natural construction of polyhedra is suggested by the following construction of circumscribed polyhedra in the real setting. Given a finite set $\varphi=\{w^1,...,w^n\}$ on the boundary of a strongly convex domain $K\subset\rl^d$, let $P_{\text{c}}(\varphi)$ denote the the intersection of those $n$ half-spaces determined by the tangent planes to $bK$ at $w^1,...,w^n$ that contain $K$. Analogous to this construction, a class of ``complex" polyhedra are defined in \cite{Gu21} as follows.  Given a strongly $\C$-convex domain $D\subset\Cd$, a {\em source set} $\varphi=\{w^1,...,w^n\}\subset bD$ and a {\em size function} $\bde:bD\rightarrow (0,\infty)$, let $P(\varphi;\bde)$ be the union of those connected components of 
\be\label{E:poly}
		\bigcap_{j=1}^{n}\left\{z\in \Cd:\text{dist}(z,\mathcal H_{w^j}bD)>\bde(w^j)\right\},
\ee
that intersect $D$, where $\mathcal H_wbD$ is the complex tangent space to $bD$ at $w$; see Section~\ref{SS:DomPoly}. The set \eqref{E:poly} is {\em $\C$-linearly convex} and hence, $P(\varphi;\bde)$ is {\em weakly $\C$-linearly convex}; see \cite[Section 2.1]{APR}. We call $P(\varphi;\bde)$ an {\em induced polyhedron with at most $n$ facets}. Though ``$D$-induced polyhedra'' may be a more precise term,  we drop the domain-dependence for convenience. In comparison,  $P_{\text{i}}(\varphi)$ and $P_{\text{c}}(\varphi)$ have at most $n$ vertices and at most $n$ facets, respectively. Geometrically, the role of half-spaces is played by complements of `tubes' of radii $\bde(\boldsymbol\cdot)$ with central axes $\mathcal H_{\boldsymbol\cdot}bD$. See Figure \ref{F:d=1} for illustrations of induced polyhedra in dimension one. In higher dimensions, induced polyhedra are hard to draw, but see Figures~\ref{F:hyperbola} and \ref{F:parabola} for illustrations of these `tubes' that generate the polyhedra.
\begin{figure}[H]
\begin{center} 
\begin{overpic}[grid=false,tics=10,scale=0.45]{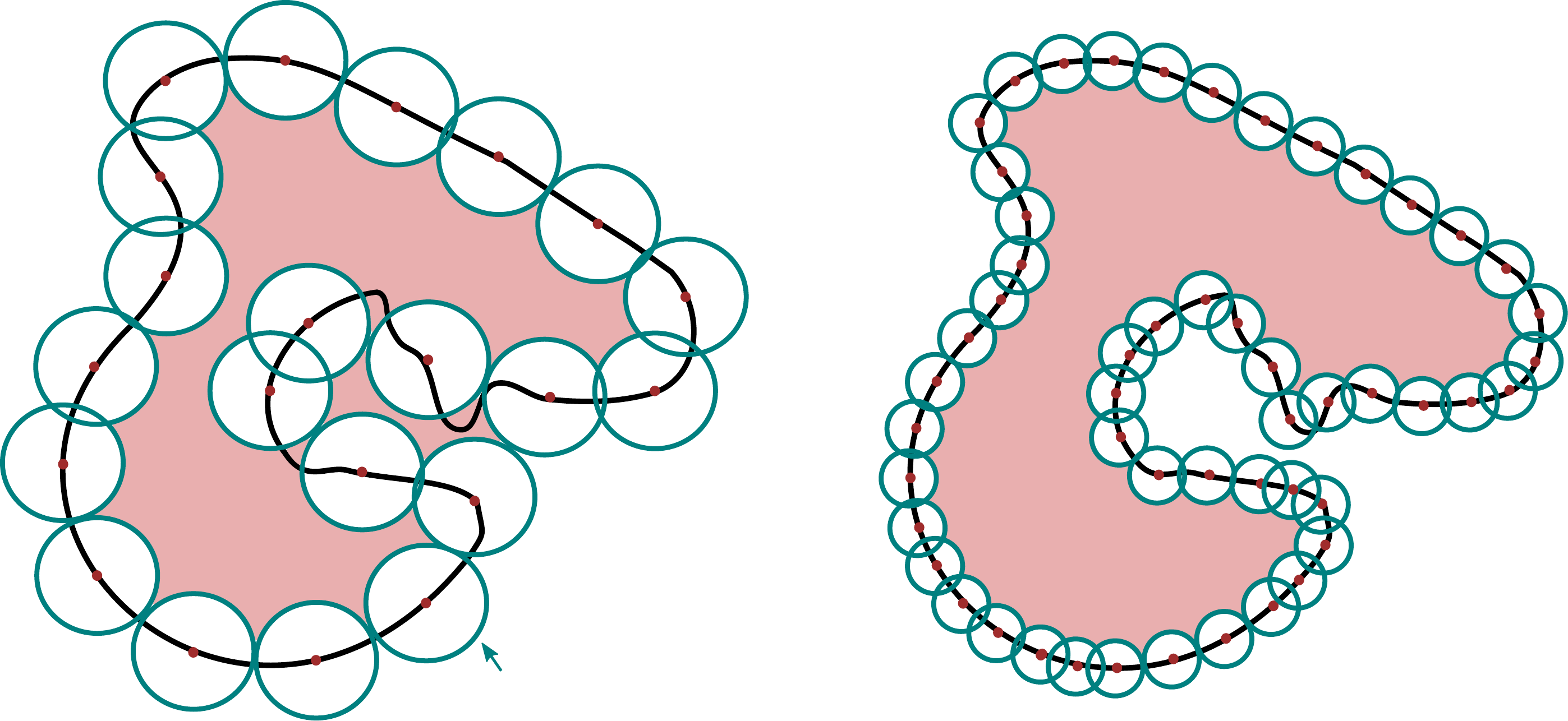} 
\put(65,-15){$(a)$}
\put(255,-15){$(b)$}
\put(100,20){\small\textcolor{Maroon}{$w$}}
\put(108,4){\textcolor{teal}{\footnotesize$\{z:\text{dist}(z,\mathcal H_w(bD))=\de\}$}}
\end{overpic}
\bigskip
\caption{Induced polyhedra in $\C$: the domain $D$ is the region bound by the black curve; the induced polyhedron is the union of regions shaded in pink.\\ $(a)$ A disconnected and non-contained induced polyhedron with $21$ facets.\\
$(b)$ A connected and contained induced polyhedron with $52$ facets.}\label{F:d=1}
\end{center}
\end{figure}

\begin{figure}[H]
\begin{center} 
\begin{overpic}[grid=false,tics=10,scale=0.5]{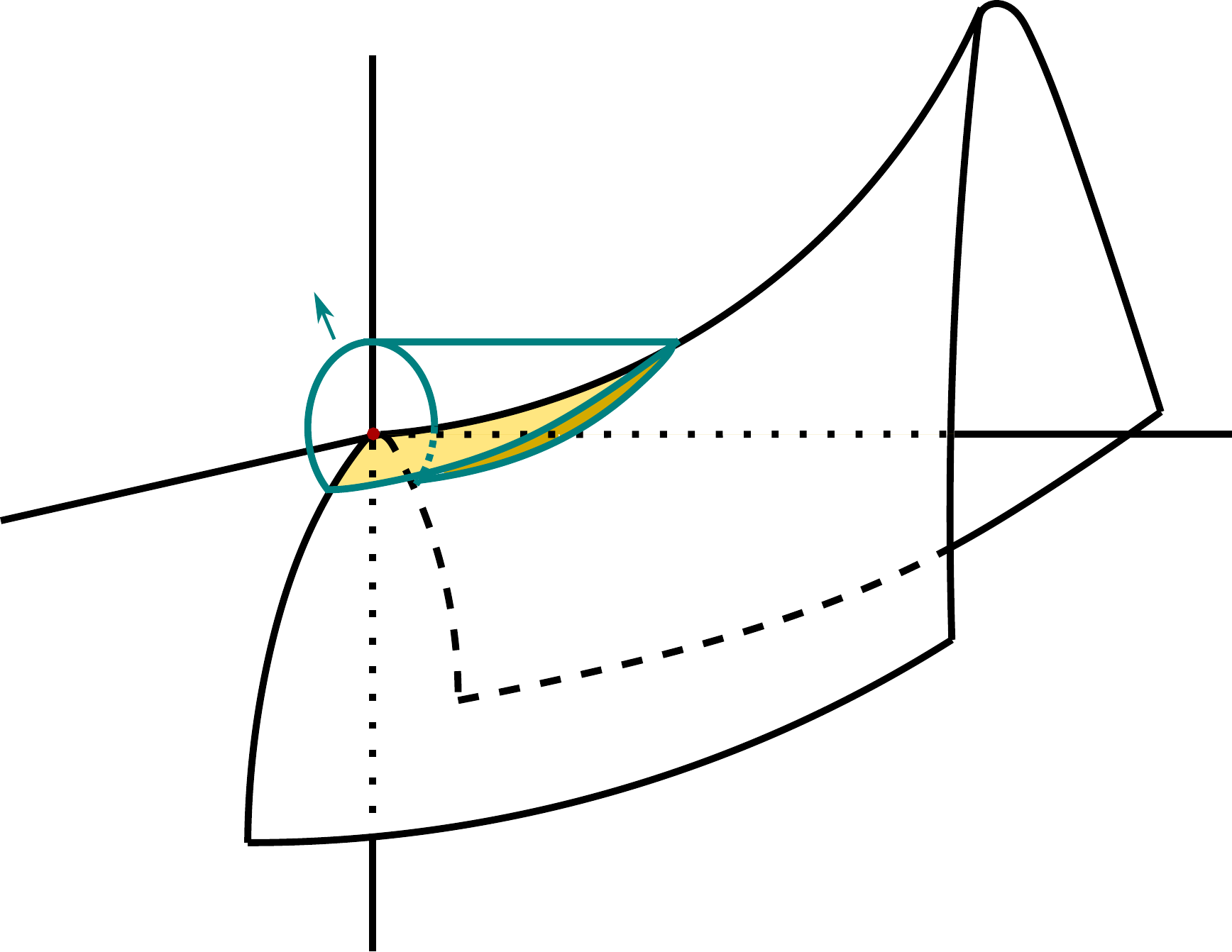} 
\put(240,95){ $|z_1|$}
\put(240,115){ \small$\mathcal H_w(bD)$}
\put(66,108){\small\textcolor{Maroon}{$w$}}
\put(5,80){ $x_2$}
\put(83,172){$y_2$}
\put(-60,140){\footnotesize\textcolor{teal}{$\{z\in\overline D:\text{dist}(z,\mathcal H_w(bD))=\de\}$}}
\put(200,58){$bD$}
\end{overpic}
\medskip
\caption{A ``cut" $C(w;\de)=\{z\in D:\text{dist}(z,\mathcal H_w(bD)<\de)\}$ (region bound by the teal tube and $bD$) and a ``cap" $S(w;\de)=\{z\in bD:\text{dist}(z,\mathcal H_w(bD)\leq\de)\}$ (in yellow) in the domain   $D=\{y_2>|z_1|^2-x_2^2\}$ at $w=(0,0)$. The interior of the complement of $C(w;\de)$ in $\C^2$ is an induced polyhedron with one facet.}\label{F:hyperbola}
\end{center}
\end{figure}

\begin{figure}[H]
\begin{center} 
\begin{overpic}[grid=false,tics=10,scale=0.45]{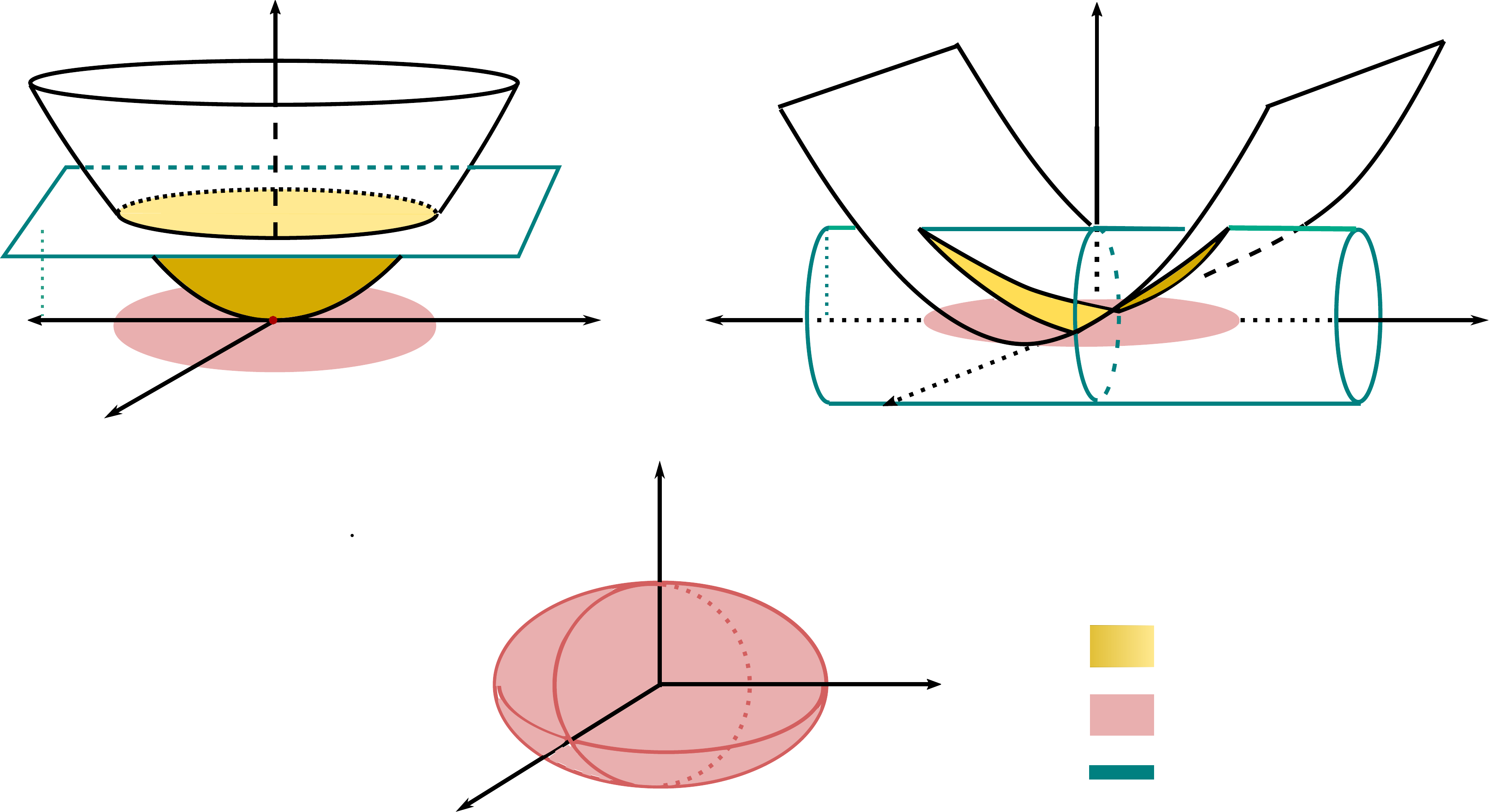} 
\put(70,240){$y_2$}
\put(130,240){$bD$}
\put(15,115){ $x_1$}
\put(180,140){ $y_1$}
\put(100,120){ $\mathcal H_w(bD)$}
\put(82,142){\textcolor{Maroon}{$w$}}
\put(70,100){ $(a)$}
\put(15,160){\footnotesize\textcolor{teal}{$\delta$}}
\put(323,240){$y_2$}
\put(400,240){$bD$}
\put(330,100){ $(b)$}
\put(260,115){ $x_2$}
\put(450,140){ $y_1$}
\put(425,160){ \small$\mathcal H_w(bD)$}
\put(257,160){\footnotesize\textcolor{teal}{$\delta$}}
\put(185,100){ $x_2$}
\put(202,50){ \small$\delta$}
\put(217,43){ \small$\sqrt{\delta}$}
\put(175,35){ \small$\sqrt{\delta}$}
\put(290,30){$y_1$}
\put(130,-5){$x_1$}
\put(195,-18){ $(c)$}
\put(335,65){\small{Key}}
\put(360,47){\small{$S(w;\de)$}}
\put(360,25){\small{$\wt S(w;\de)$}}
\put(360,9){\small{$\{\text{dist}(z,\mathcal H_w(bD))=\de\}$}}
\end{overpic}
\vspace{20pt}
\caption{$3$D slices of the domain $D=\{y_2>|z_1|^2\}$ near $w=(0,0)$; $\mathcal H_w(bD)=\{z_2=0\}$. Here, $S(w;\de)$ is the ``cap" $\{z\in bD:\text{dist}(z,\mathcal H_w(bD))\leq \de\}$ and $\wt S(w;\de)$ is the projection of $S(w;\de)$ onto $\mathcal T_w(bD)=\{y_2=0\}$. \\
$(a)$ $D$ in the $x_2=0$ plane,\\
$(b)$ $D$ in the $x_1=0$ plane,\\
$(c)$ $\wt S(w;\de)$ in the $y_2=0$ plane, i.e., in $\mathcal T_w(bD)$.}\label{F:parabola}
\end{center}

\end{figure}

  Induced polyhedra are special cases of what are known as {\em analytic polyhedra} in the complex analysis literature. However, as pointed out in \cite{Gu17,Gu21}, the full class of analytic polyhedra is not suitable for the kind of approximation problems we are interested in. The size function may seem as an artefact of the complex setting since it is absent in the construction of $P_\text{c}(\varphi)$ above. However, convex polyhedra with nontrivial size functions have appeared in the real setting; see \citet{ludwig1999asymptotic} and \citet[Section 5]{glasauer1996asymptotic}.

\subsubsection*{\bf Random polyhedra and the metric of approximation.} We wish to discuss a random analogue of the following question investigated in \cite{Gu21}: given a bounded strongly $\C$-convex domain $D\subset\Cd$, determine the asymptotics as $n\rightarrow\infty$ of 
	\be
	\label{e:VnD}
	v_n(D) := \inf \{\lambda(D\setminus P(\varphi;\bde)): \varphi=\{w^1,...,w^n\}\subset bD,\   \bde:bD\rightarrow (0,\infty),\ P(\varphi;\bde)\subset D \},
	\ee
where $\lambda$ is the Lebesgue measure on $\Cd$. We know from \cite{Gu21} that 
	\be\label{E:opt}
		v_n(D) \sim v^{\sharp}_\C(D) n^{-1/d}\qquad  \text{as}\ n \to \infty,
	\ee
for some domain-dependent constant $v^{\sharp}_\C(D)$. 

We randomize $P(\varphi;\bde)$ by taking $\varphi$ to be random point sets.  Let $\{W^j\}_{j \geq 1}$, be i.i.d.  points sampled according to a probability density $f(\cdot)$ with respect to the surface area measure $\sigma$ on $bD$.  We either take the random point set to be $\varphi = \eta_n := \{W^1,\ldots,W^{N_n}\}$ (Poisson process) where $N_n$ is an independent Poisson($n$) random variable, or $\varphi = \beta_n := \{W^1,\ldots,W^n\}$ (binomial process). We shortly comment on the choice of $\bde$.  

As with \eqref{e:VnD}, we use the Lebesgue volume $\lambda$ to measure the error of approximation of $D$ by $P(\varphi;\bde)$. Since we are interested in randomly contained polyhedra but a  random $P(\varphi;\bde)$ (in the sense defined above) may not be contained in $D$, we choose the following metric whereby the polyhedron is penalised if it is not contained in $D$ (or is unbounded): 
\begin{equation}
\label{e:volmetric}
 \delta_V\big(D,P(\varphi;\bde)\big) :=  \lambda(D \setminus P(\varphi;\bde))\1(P(\varphi;\bde) \subset D)  + \lambda(D) \1(P(\varphi;\bde)\subsetneq D).
 \end{equation}
In the real case, such a  penalty is unnecessary for inscribed polyhedra, but is used for randomly circumscribed polyhedra; see \citet{boroczky2004approximation}. An alternative would be the symmetric difference of $D$ and $P(\varphi;\bde)$, as considered by  \citet{ludwig1999asymptotic}, \citet{ludwig2006approximation} and  \citet{grote2018approximation} in the real case, which is a subject for future investigation.

\subsubsection*{\bf Overview of the results.} The first step towards understanding the random equivalent of $v_{n}(D)$ is to investigate the asymptotics of $\delta_V(D,P(\eta_n;\bde_n))$ and $\delta_V(D,P(\beta_n;\bde_n))$ for a given sequence of functions $\bde_n$.  Since we want to study randomly contained polyhedra, we assume that the polyhedra are contained in $D$ with high probability, i.e., $\BP(P(\eta_n;\bde_n) \subset D) \to 1$ as $n \to \infty$. For this, it is essential to assume that $\bde_n = g(z)(\frac{\log n}{n})^{1/d}$ for a function $g : bD \to (0,\infty)$; see Proposition \ref{lem:rdelta}. For technical reasons, we assume that $g$ is $\alpha$-H{\"o}lder for some $0<\alpha<1$.   

Our first main result, Theorem \ref{thm:cippoi}, is that when $\varphi_n$ is either $\eta_n$ (Poisson) or $\beta_n$ (binomial), and $\bde_n$ is as above,   
\begin{equation}
\label{E:cvgprobintro}  \Big( \frac{n}{\log n} \Big)^{1/d} \delta_V(D,P(\varphi_n;\bde_n)) \overset{p}{\rightarrow} \int_{bD} g(z) \md \sigma(z),  
\end{equation}
as $n\rightarrow\infty$, where $\overset{p}{\rightarrow}$ denotes convergence in probability.  Under further assumptions on the rate of convergence of the containment probabilities $\BP(P(\varphi_n;\bde_n)\subset D)$, we 
	\begin{itemize}
		\item [$1.$] strengthen $\eqref{E:cvgprobintro}$ to $L^1$-convergence (Theorem~\ref{t:expvarpoi}), 
		\item [$2.$] obtain variance (upper and lower) bounds for $\delta_V(D,P(\eta_n;\bde_n))$, and variance upper bounds for $\delta_V(D,P(\beta_n;\bde_n))$ (Theorem~\ref{t:expvarpoi}), and
		\item [$3.$] bound the rate of normal approxiation of $\delta_V(D,P(\eta_n;\bde_n))$ (Theorem \ref{t:cltpoi}). 
	\end{itemize}
Our variance upper and lower bounds for $\de_V(D,P(\eta_n;\bde_n))$ differ only by logarithmic factors. We are unable to obtain good variance lower bounds for $\delta_V(D,P(\beta_n;\bde_n))$. We prove a normal approximation result for $\delta_V(D,P(\beta_n;\bde_n))$ assuming a suitable lower bound on the variance; see Theorem \ref{t:cltpoi}.

\subsubsection*{\bf Optimal random approximations: a conjecture} In problems of approximation, it is of interest to compare the rate of random approximations to that of optimal approximations. Note that if we ignore the domain-dependent constant in \eqref{E:cvgprobintro}, we obtain that the approximation rate is $(n^{-1}\log n)^{1/d}$, which differs from the rate of optimal approximation in \eqref{E:opt} by a logarithmic factor. 
This additional factor is necessitated by the need for the containment condition on the underlying random polyhedra. In fact, once the containment condition is translated into a coverage condition on the boundary (Lemma~\ref{L:inc=cov}), the additional logarithmic factor is no longer surprising in view of coverage results of \citet{flatto1977random},  \citet{hall1985coverage} and \citet{janson1986random} (although these do not apply directly to our setting).  

Now, we compare the limiting constants in \eqref{E:opt} and \eqref{E:cvgprobintro}. At first glance,  it appears that the choice of density $f$ is absent in the limiting constant in \eqref{E:cvgprobintro}. But, it is implicit in the condition that $\BP(P(\varphi_n;\bde_n) \subset D) \to 1$ as $n \to \infty$. This raises the questions of: $(a)$ the best possible limiting constant in \eqref{E:cvgprobintro}, i.e. ,
$$v_D^*(f)=\inf \left\{ \int_{bD} g(z) \md \sigma(z) : \mbox{$g$ is continuous and} \,  \lim_{n \to \infty} \BP(P(\varphi_n;\bde_n) \subset D) = 1 \right\}$$ 
for a fixed continuous density $f$,  and $(b)$ the least possible constant, $v_{\C}^*(D)$, over all such $f$. The proofs of \cite{flatto1977random, hall1985coverage,janson1986random}  for coverage results in the real setting suggest the following conjecture. 
\begin{conjecture}\label{conj}
Let $D\subset\Cd$ be a bounded strongly $\C$-convex domain, and $f:bD\rightarrow(0,\infty)$ be a continuous probability density function. Then, 
	\be\label{E:conj}
	v^*_D(f)=(h_d\kappa_{2d-2})^{-1/d}\int_{bD}
\frac{(16\nu_D(z))^{\frac{1}{2d}}}{f(z)^{\frac{1}{d}}}\,d\sigma(z),
\ee
where $\nu_D$ is the complex-restricted curvature of $bD$ (see Definition~\ref{D:curvature}), $\kappa_d$ is the Lebesgue volume of the unit ball in $\rl^d$, and $h_d=2\int_0^{\pi/2}\cos^d\!\theta\:\md\theta$. 
\end{conjecture}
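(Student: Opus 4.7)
The plan is to reduce the problem to a random covering question on $bD$, apply a coverage threshold of Flatto--Hall--Janson type, and then solve a pointwise variational problem.

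By Lemma~\ref{L:inc=cov}, containment of $P(\varphi_n;\bde_n)$ in $D$ is asymptotically equivalent to the event that the random caps $\{S(W^j;\bde_n(W^j))\}_j$ cover $bD$. A direct local computation, using a biholomorphic normal form at each $z\in bD$ in which the defining function reduces to $y=Q(\zeta)$ for a positive-definite Hermitian form $Q$ on $\C^{d-1}$ encoding the complex-restricted curvature $\nu_D(z)$, yields the cap-area asymptotic
$$\sigma\bigl(S(z;\de)\bigr)=h_d\,\kappa_{2d-2}\,(16\nu_D(z))^{-1/2}\,\de^{d}\,(1+o(1))\qquad\text{as }\de\to 0,$$
uniformly in $z\in bD$. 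This is where the constants $h_d$, $\kappa_{2d-2}$ and the factor $16\nu_D(z)$ enter the picture, via a Beta-integral that collapses to $h_d=2\int_0^{\pi/2}\cos^d\theta\,d\theta$ in the isotropic model.

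The second step is to adapt the coverage thresholds of Flatto, Hall and Janson to this anisotropic setting. Heuristically, coverage of $bD$ with probability tending to $1$ should hold exactly when the single-cap hitting probability $p(z):=\BP(z\in S(W^1;\bde_n(W^1)))$ satisfies $n\,p(z)-\log n\to+\infty$ uniformly in $z$. Using the near-symmetry $z\in S(w;\de)\Leftrightarrow w\in S(z;\de)$ to leading order in $\de$ (a consequence of the $\cont^2$ variation of the complex tangent planes), together with continuity of $f$ and $g$, one obtains $n\,p(z)/\log n\to f(z)\,h_d\,\kappa_{2d-2}\,(16\nu_D(z))^{-1/2}\,g(z)^d$. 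Hence the coverage condition translates to the pointwise inequality
$$f(z)\,g(z)^d\,\geq\,(h_d\kappa_{2d-2})^{-1}\,(16\nu_D(z))^{1/2}\qquad\text{for all }z\in bD.$$
The minimum of $\int_{bD}g\,d\sigma$ under this decoupled constraint is attained pointwise at $g^*(z)=(h_d\kappa_{2d-2})^{-1/d}(16\nu_D(z))^{1/(2d)}f(z)^{-1/d}$, and integrating $g^*$ yields exactly the right-hand side of \eqref{E:conj}.

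The main obstacle is the Flatto--Hall--Janson step. The classical theorems treat round caps of uniform radius on spheres or tori, whereas here the caps are anisotropic ($\sim\!\sqrt{\de}$ in the $2d-2$ complex-tangential directions and $\sim\!\de$ in the single totally real tangential direction), carry the position-dependent size function $\bde_n(\cdot)$, and their overlap geometry is governed by the varying curvature $\nu_D$. A rigorous upper bound (coverage when $g$ strictly exceeds $g^*$) requires second-moment or Poisson-approximation control of the uncovered area, executed in biholomorphic normal coordinates and made uniform via the $\cont^\alpha$-regularity of $g$. The matching lower bound, namely that $\int g\,d\sigma$ strictly below the conjectured value forces $\BP(\mathrm{coverage})\not\to 1$, is the hardest piece: one must isolate a mesoscopic patch on $bD$ where the pointwise constraint fails and show, via Poisson limits for the number of uncovered points, that an uncovered point persists with positive probability. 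Handling the non-exact symmetry of the caps and the anisotropic overlap geometry, uniformly in $z$, is the technical crux of the argument.
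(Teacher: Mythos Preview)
This statement is labeled \emph{Conjecture} in the paper, and the paper does \emph{not} supply a proof of it. The authors explicitly present \eqref{E:conj} as a conjecture motivated by the coverage results of Flatto, Hall and Janson, and in Section~\ref{s:remltthms} they write that ``due to the inhomogeneous nature of the caps, this is at best a coverage problem in a sub-Riemannian metric, for which no coverage results of the type in \cite{flatto1977random,hall1985coverage,janson1986random} are available. This is one of the obstacles in proving Conjecture~\ref{conj}.'' So there is no proof in the paper for you to be compared against.

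What you have written is not a proof but a program, and you are candid about this: your own final paragraph identifies the Flatto--Hall--Janson step as the main obstacle and explains why the anisotropy of the caps (scaling like $\sqrt{\de}$ in the complex-tangential directions and like $\de$ in the totally real direction) prevents a direct appeal to the classical results. This diagnosis matches the paper's exactly. Your heuristic derivation of the pointwise constraint $f(z)g(z)^d\geq (h_d\kappa_{2d-2})^{-1}(16\nu_D(z))^{1/2}$ is consistent with the cap-area asymptotic in Theorem~\ref{T:CutsCapsGeneral} (see \eqref{eq:cap}, where $k_d(w)=4\kappa_{2d-2}\nu_D(w)^{-1/2}$), and the subsequent pointwise optimization is straightforward and correct. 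But the two genuine gaps you name --- the sharp coverage threshold for anisotropic, position-dependent caps, and the matching lower bound via a Poisson limit for uncovered points --- are precisely the missing ingredients that make this a conjecture rather than a theorem. Until those steps are carried out rigorously, your argument remains at the level of the heuristic that led the authors to formulate the conjecture in the first place.
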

Note that the right-hand side of \eqref{E:conj} is minimized when 
	\bes
		f=(16\nu_D)^{\frac{1}{2d+2}}\left(
	\int_{bD}(16\nu_D)^{\frac{1}{2d+2}}d\sigma\right)^{-1},
	\ees 
i.e., $f\md \sigma$ is the so-called normalized M{\"o}bius-Fefferman measure, which appears in \cite{Gu21} in relation to $v_{\C}^\sharp(D)$, and thus $v_{\C}^*(D) = (h_d\kappa_{2d-2})^{-1/d}\left(\int_{bD}(16\nu_D)^{\frac{1}{2d+2}}
	d\sigma\right)^{\frac{d+1}{d}}.$
This is expected to differ from the constant $v^{\sharp}_\C(D)$ in \eqref{E:opt} only by a dimensional constant. See Section \ref{s:remltthms} for a remark on this conjecture.

\subsubsection*{\bf Comparison with real random polyhedra.} Needless to say, the literature on random real polyhedra is very large owing to the numerous ways of constructing convex polyhedra. Nevertheless, there are some recurring features, within the context of which we place our results. 

As noted above, the exponent of $n$ in the rate of optimal approximations \eqref{E:opt} and random approximations \eqref{E:cvgprobintro} is the same, i.e., $2/2d$. This matching of exponents in the optimal and random rates is seen in the (real) case of approximations by $P_{\text i}(\varphi)$ and $P_{\text c}(\varphi)$ as well, where the exponent is $2/(d-1)$. The quantities $2d$ and $d-1$ in the exponents $1/d$ in the complex setting and $2/(d-1)$ in the real setting, respectively, can be unified when interpreted in terms of the Hausdorff dimensions of naturally-occuring metric space in these problems; see \cite[Section 1]{Gu17}. The appearance of the logarithmic factor in \eqref{E:cvgprobintro} is a point of departure from the case of $P_{\text i}(\varphi)$ and $P_{\text c}(\varphi)$. However, similar logarithmic factors appear even in the real setting in the case of polyhedral constructions for which containment needs to be imposed; e.g.,  see \cite[Theorems 4 and 5]{glasauer1996asymptotic}. Finally, though the optimal and random limiting constants in the complex setting ($v^\sharp_\C(D)$ and $v^*_D$) are not as well-understood as in many models of real polyhedra, we have some evidence to believe that they too will only differ by a dimensional constant, as in the real case, e.g., compare the constants in \cite{GruasymptoticII} (best inscribed and circumscribed) and \cite{schutt2000random, reitzner2002random,boroczky2004approximation} (random inscribed and circumscribed). 

So far, we have only placed the convergence in probability result (Theorem~\ref{thm:cippoi}) into context. Our expectation, variance and normal approximation results also compare well with what is known in the real setting, but we postpone the details to Section~\ref{s:remltthms}.  

\subsubsection*{\bf Overview of the proofs.} The two key ideas underpinning our proofs are that the complement of an induced polyehdron in $D$ can be expressed as a union of finitely many `cuts' i.e., sets of the form $\left\{z\in D:\text{dist}(z,\mathcal H_{w}bD) < \bde(w)\right\}$, 
and that a cut is well-approximated by a `tubular cut'. The volume of a union of tubular cuts is then esitmated via a Weyl-type tube formula for non-uniform tubes due to  \citet{roccaforte2013volume}. This requires analysis of an appropriate depth function and the so-called visibility regions. The results on convergence in probability and $L^1$-convergence follow from these estimates (and suitable assumptions on the containment probabilities $\BP(P(\varphi_n;\bde_n) \subset D)$). This approach is in the spirit of \cite{boroczky2004approximation}, where Steiner's tube formula is used to study volume approximations of randomly circumscribed (real) polyhedra. For variance bounds and normal approximation,  we use the Efron--Stein,  Poincar\'e, and second-order Poincar\'e inequalities along with first-order estimates on the volumes of cuts and tubular cuts.   Some of these geometric estimates were already available in the literature (Lemmas~\ref{L:deffn}, \ref{L:Model}, \ref{L:Lest}, \ref{L:quasimetric} and \ref{L:Vitali}), while the rest are either new or are quantitatively precise versions of what was known previously.  We believe that these estimates are of independent interest in the study of $\C$-convex domains.

\section{Limit theorems for random polyhedra}
\label{s:modelmain}

 In this section,  we formally introduce our model of random polyhedra and state our main results.  In Section~\ref{SS:DomPoly}, we define the class of polyhedral objects under consideration. In Section~\ref{s:poissonrandpoly}, we state the main limit theorems for the volume approximation.  In Section \ref{s:remltthms},  we make some observations related to the main theorems.   For reference, a list of notation (such as for complex derivatives and gradients) is provided at the beginning of Section~\ref{S:GeomEst}.

\subsection{Polyhedra induced by strongly $\C$-convex domains}\label{SS:DomPoly} Recall that a set $K\subset \rl^d$ is (real) convex if and only if the intersection of $K$ with every real line is connected. In analogy with this, a subset $D\subset\Cd$ is said to be {\em $\C$-convex} if for every {\em complex} affine line $L$, $D\cap L$ is connected and simply connected, i.e., the complement of $D$ in the extended line $L\cup\{\infty\}$ is connected.  A domain (i.e., a connected and open set) in $\C$ is $\C$-convex precisely when it is simply connected. In higher dimensions, while all $\C$-convex domains in $\Cd$ are topologically trivial, i.e., homeomorphic to the unit ball in $\Cd$, the converse is far from true. $\C$-convex sets satisfy many complex analogues of properties displayed by convex sets.  As mentioned in Section~\ref{S:intro}, $\C$-convexity is invariant under projective (or linear fractional) transformations, i.e., mappings of the form \eqref{E:LFT}. Akin to the polar of a convex set, one may define the dual complement of a $\C$-convex set; see \cite[Def. 2.1.1]{APR}. Moreover, any open or compact $\C$-convex set $D\subset\Cd$ satisfies a complex version of the Hahn--Banach separation property: $\Cd\setminus D$ is a union of complex hyperplanes in $\Cd$. We direct the reader to \cite{APR} for a more detailed discussion of $\C$-convexity and related notions. 

We now consider $\cont^2$ domains, i.e., those of the form $D=\{z\in\Cd:\rho(z)<0\}$, where $\rho$ is a $\cont^2$ function on $\Cd$ whose gradient is nonvanishing on $bD$. Here, $\rho$ is called a {\em defining function of $D$}. At any  point $w\in bD$, the {\em real tangent space} of $bD$ at $w$ is the $\rl$-vector space
	\bes
			\mathcal T_wbD = \{v\in\Cd:\rea\left<\bdy\rho(w),v\right>=0\},
	\ees
while the {\em complex tangent space} of $bD$ at $w$ is the $\C$-vector space
	\bes
			\mathcal H_wbD = \{v\in\Cd:\left<\bdy\rho(w),v\right>=0\}.
	\ees
We note that the complex tangent space at $w$ is the maximal subspace of the real tangent space at $w$ that is invariant under the action of multiplication by $i$. 

Recall that $D\subset\Cd$ as above is real convex if and only if the Hessian of $\rho$ when restricted to $\mathcal T_wbD$ is semi-positive definite for every $w\in bD$. In the same way, $D\subset\Cd$ as above is $\C$-convex if and only if the Hessian of $\rho$ when restricted to $\mathcal H_wbD$ is semi-positive definite for every $w\in bD$. This can be expressed in terms of complex derivatives as follows:
	\bes
	Q_w(v,v) := \sum_{j,k=1}^d\secpartl{\rho}{z_j}{\zbar_k}(w)v_j\overline v_k+
			\rea \sum_{j,k=1}^d\secpartl{\rho}{z_j}{z_k}(w)v_jv_k
\geq 0\quad \forall w\in b D, v\in \mathcal H_wbD.
	\ees
	\begin{definition} Let $D\subset\Cd$, $d\geq 2$, and $\rho$ be as above. Then $D$ is said to be {\em strongly $\C$-convex} if, for each $w\in bD$, $Q_w$ is positive definite when restricted to $\mathcal H_wbD$. 
\end{definition}

Any such $D$ has the property that the complex tangent hyperplane $w+\mathcal H_wbD=\{z\in\Cd:\left<\bdy\rho(w),w-z\right>=0\}$ at any $w\in bD$ intersects $\overline D$ only at $w$; see \cite[Sec. 2.5]{APR}.
By the continuity of the second-order derivatives, strong $\C$-convexity is equivalent to the existence of a continuous function $c:bD\rightarrow(0,\infty)$ such that
	\be\label{E:strcvx1}
		Q_w(v,v)\geq c(w)||v||^2\quad \forall w\in b D, v\in \mathcal H_wbD.
	\ee 
Thus, when $D$ is bounded, strong $\C$-convexity is equivalent to {\em strong $\C$-linear convexity} (in the sense of \cite{LaSt}), i.e., there exists a $\tilde c>0$ such that
	\be\label{E:strcvx2}
	\left|\left<\bdy\rho(w),w-z\right>\right|\geq \tilde c||w-z||^2 
		\quad \forall (z,w)\in\overline D\times b D.
	\ee
See Proposition 3.1 (and the subsequent remarks) in \cite{LaSt}. In general, for a fixed $w\in bD$, \eqref{E:strcvx1} implies \eqref{E:strcvx2} only for $z$ near $w$, but since the left hand side of \eqref{E:strcvx2} is positive on $\overline D\setminus\{w\}$, the compactness of $\overline D$ grants \eqref{E:strcvx2} for all $z\in \overline D$. In geometric terms, $D$ is strongly $\C$-convex if $bD$ is locally equivalent to a strongly real convex hypersurface in $\Cd$ via a linear fractional transformation. For more insight into the geometry of such domains, see Lemma~\ref{L:Model}, Lemma~\ref{L:quasimetric}, and Appendix~\ref{SS:curvature}.

Let $D\subset\Cd$ be a bounded strongly $\C$-convex domain and $n\in\N$. Given a defining function $\rho$ of $D$, $\varphi=\{w^1,...,w^n\}\subset bD$ and a function $\bde:bD\rightarrow (0,\infty)$, let $P(\varphi;\bde)$ be the union of those connected components of 
\be\label{E:poly1}
		\bigcap_{j=1}^{n}\left\{z\in \Cd:\frac{\left|\left<\bdy\rho(w^j),w^j-z\right>\right| }
		{||\bdy\rho(w^j)||}>\bde(w^j)\right\},
	\ee
that intersect $D$. To place \eqref{E:poly1} into context, we note that if  $K\subset\rl^d$ is a $\cont^1$ (real-)convex domain with defining function $r$, then any $n$-faceted (real-)convex polyhedron $P\subset\rl^d$ can be written as
	\bes
		P=\left\{x\in\rl^d:\left<\nabla r(y^j),y^j-x\right>>\de_j,
			j=1,...,n\right\}
	\ees
for some $y^1,...,y^n\in bK$ and $\de_1,...,\de_n \in \R$. In particular, if $\de_1=\cdots =\de_n=0$, then $P$ is the circumscribed polyhedron $P_{\text c}(\varphi)$ for $\varphi=\{y^1,...,y^n\}$. 

\begin{definition}\label{D:Poly} Given $D$ as above,  an {\em induced polyhedron} is a nonempty set of the form $P(\varphi;\bde)$. We say that $P(\varphi;\bde)$ is an {\em induced polyhedron contained} in $D$, if $P(\varphi;\bde)\subset D$. We call $\varphi$ the {\em source set} and $\bde$ the {\em size function} of $P(\varphi;\bde)$, respectively. 
\end{definition}

Note that, for any $w\in bD$, the set 
	\bes
	F_w := \left\{z\in\Cd:\frac{\left|\left<\bdy\rho(w),w-z\right>\right| }
		{||\bdy\rho(w)||}=\bde(w)\right\}
	\ees
is a real hypersurface in $\Cd$ that is foliated by complex hyperplanes, and divides $\Cd$ into two unbounded $\C$-convex domains. Thus, $F_w$ can be viewed as a complex analogue of a real hyperplane in $\rl^d$. In view of this, we refer to any nonempty set $F_{w_j}\cap bP(\varphi;\bde)$, $j=1,...,n$, as a {\em facet} of $P(\varphi;\bde)$. The elements of the collection
	\bes
	 \mathscr P_n(D):=\big\{P(\varphi;\bde):\varphi\subset bD,\ |\varphi|\leq n,\ \text{and}\ \bde:bD\rightarrow (0,\infty)\big\}
	\ees
are called induced polyhedra with at most $n$ facets. This class is invariant under complex affine transformations, i.e., given a source set $\varphi$ on $bD$,  size function $\bde$, and $A\in \operatorname{GL}(d;\C)$, there exists a size function $\bde'$ such that $A(P(\varphi;\bde))=P(A(\varphi);\bde')$. This follows from \eqref{eq:MobiusonL}.

\subsection{Main results}
\label{s:poissonrandpoly}
We briefly introduce the Poisson and binomial processes, and then state our main results for random polyhedra constructed on these point processes. See Appendix \ref{s:probtool} for a more formal definition of point processes and a list of results regarding them that we use in this paper. 

Let $f: bD \rightarrow (0,\infty)$ be a probability density function i.e.,  $\int_{bD} f(z)  \md \sigma(z) =1$. For $n \geq 1$,  let $\eta_n$ be a Poisson process with intensity measure $nf\sigma$.  From the standard representation of a Poisson process,  we can assume that $\eta_n= \{W^i \}_{1 \leq i \leq N_n}$, where $W^i$ are i.i.d. with probability density $f$, and $N_n$ is an independent Poisson random variable with mean $n$.  A second point process of interest is the binomial process.  We say that a process $\beta_n$ is a binomial process on $bD$ with $n$ points and density $f$ if $\beta_n = \{W^1,\ldots,W^n\}$ where, once again, $W^i$ are i.i.d.  points with probability distribution $f\sigma$.

For a random polyhedral approximation of $D$, we consider an induced polyhedron whose source set is a point process on $bD$ (either $\eta_n$ or $\beta_n$) and size function is $\bde=\bde_n$, where we impose the following assumption on $\bde_n$:
\begin{equation}
  \label{eq:assrd}
      \bde_n(w) = g(w) \delta_n {,} \quad \mbox{where} \quad  \delta_n :=  \left( \frac{\log n}{n}\right)^{\frac{1}{d}} 
\end{equation}
and $g$ is a fixed positive function on $bD$ such that $g\in\cont^\alpha(bD)$, for some $\alpha\in(0,1]$.

Our main objective in this paper is to understand the asymptotics of the volume of the portion of $D$ that is left uncovered by the random polyhedra $P(\eta_n; \bde_n)$ or $P(\beta_n ; \bde_n)$ 
as $n \rightarrow \infty$. Two of our main results are convergence in probability, and under further assumptions, $L^1$-convergence of the random variables $\de_V(D,P(\eta_n,\bde_n))$ and $\de_V(D,P(\beta_n,\bde_n))$, where recall from \eqref{e:volmetric} that
\bes
\de_V(D,P)=\lambda(D \setminus P)\1(P \subset D)  + \lambda(D) \1(P \subsetneq D).
\ees
The rest are variance bounds and normal approximation for these random variables.   
\begin{theorem}[\bf Convergence in Probability]\label{thm:cippoi} 
 Let $f: bD \rightarrow (0,\infty)$ be a bounded probability density function that is bounded away from $0$, and $\bde_n=g\de_n$ be as in \eqref{eq:assrd}. Further, assume that $g$ is such that
  \begin{equation} \label{eq:covdelta}
    \BP(P(\eta_n; \bde_n) \subset  D) \rightarrow 1 \mbox{ as $n \rightarrow \infty$}.
    \end{equation}
  Then,
  \begin{equation}
  \label{e:cipdvpoi}
     { \left(\frac{n}{\log n}\right)^{1/d}}  \delta_V(D,P(\eta_n; \bde_n)) \stackrel{p}{\longrightarrow} \int_{bD}g(z)\md \sigma(z) \mbox{ as $n \rightarrow \infty$}.
    \end{equation}
If \eqref{eq:covdelta} holds for $\beta_n$, then \eqref{e:cipdvpoi} holds for $\beta_n$.
  \end{theorem}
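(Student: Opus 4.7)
The plan is to reduce $\de_V$ to the unpenalized volume defect and then to compute the mean and variance of the defect on the containment event. Since
\[
\de_V(D,P(\eta_n;\bde_n)) = \lambda(D\setminus P(\eta_n;\bde_n))\1(P\subset D) + \lambda(D)\1(P\not\subset D)
\]
and the rescaled penalty $(n/\log n)^{1/d}\lambda(D)\1(P\not\subset D)$ goes to $0$ in probability by \eqref{eq:covdelta} (the blow-up factor only matters on an event of vanishing probability), it suffices to prove that
\[
(n/\log n)^{1/d}\lambda(D\setminus P(\eta_n;\bde_n))\1(P\subset D)\stackrel{p}{\longrightarrow}\int_{bD}g\md\sigma.
\]
On the containment event $\{P(\eta_n;\bde_n)\subset D\}$ one has $D\setminus P(\eta_n;\bde_n)=\bigcup_{W\in\eta_n}C(W;\bde_n(W))$, where $C(w;\de):=\{z\in D:\operatorname{dist}(z,\mathcal H_w bD)<\de\}$ is the cut at $w$; the problem therefore becomes the estimation of the volume of a union of random cuts.

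By Fubini and the Poisson void-probability formula,
\[
\BE\bigl[\lambda(D\setminus P(\eta_n;\bde_n))\bigr]=\int_D(1-e^{-n\mu_n(z)})\md\lambda(z),\qquad \mu_n(z):=\int_{bD}\1\{\operatorname{dist}(z,\mathcal H_w bD)<\bde_n(w)\}f(w)\md\sigma(w).
\]
Parametrizing $D$ by tube coordinates $z=z(w_0,t)$, with $w_0\in bD$ and $t$ the inward-normal depth, I would use the local normal form for strongly $\C$-convex boundaries (Lemma \ref{L:Model}) to obtain two-sided estimates on $\mu_n(z)$ exhibiting a sharp transition: $n\mu_n(z)\to+\infty$ for $t$ strictly below $g(w_0)\de_n$ and $n\mu_n(z)\to 0$ for $t$ above $g(w_0)\de_n$, with the critical layer having width $o(\de_n)$. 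Integrating $(1-e^{-n\mu_n(z)})$ in the tube coordinates, with the Jacobian of the tube map handled via the Weyl--Roccaforte formula for non-uniform tubes, then yields
\[
\BE\bigl[\lambda(D\setminus P(\eta_n;\bde_n))\bigr]=\de_n\int_{bD}g(w)\md\sigma(w)\cdot(1+o(1)).
\]

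For concentration I would apply the Poisson Poincar\'e--Efron--Stein inequality (Appendix \ref{s:probtool}). The add-one cost $\lambda(D\setminus P(\eta_n\cup\{w\}))-\lambda(D\setminus P(\eta_n))$ is nonnegative and bounded in absolute value by the single cut volume $\lambda(C(w;\bde_n(w)))\lesssim\bde_n(w)^{d+1}$ (a geometric consequence of Lemma \ref{L:Model}), so
\[
\operatorname{Var}\bigl(\lambda(D\setminus P(\eta_n;\bde_n))\bigr)\lesssim n\int_{bD}\bde_n(w)^{2(d+1)}f\md\sigma\lesssim n\de_n^{2(d+1)}=\de_n^{d+2}\log n.
\]
After multiplying by $(n/\log n)^{2/d}=\de_n^{-2}$ the rescaled variance is $\lesssim \de_n^d\log n=n^{-1}\log^2 n\to 0$, so Chebyshev concludes the Poisson case. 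The binomial statement follows by a coupling between $\beta_n$ and $\eta_n$: their symmetric difference has $O_P(\sqrt n)$ points, contributing $O_P(\sqrt n\,\de_n^{d+1})=o_P(\de_n)$ after rescaling, so the binomial defect has the same limit in probability under the containment assumption for $\beta_n$.

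The main obstacle is the heavy overlap between cuts---at a typical boundary point roughly $\log n$ caps overlap, so a naive sum $\sum_j\lambda(C(W^j;\bde_n(W^j)))\sim n\de_n^{d+1}=\de_n\log n$ overshoots the true union volume by a factor of $\log n$. The Poisson exponential $e^{-n\mu_n(z)}$ automatically encodes the correct cancellation, but extracting the precise leading constant $\int_{bD}g\md\sigma$ requires (i) the sharp transition estimate for $\mu_n(z)$ in tubular coordinates, which depends on the fine geometry of strongly $\C$-convex domains collected in Section \ref{S:GeomEst}, and (ii) the $\cont^\alpha$-regularity of $g$ to justify replacing $\bde_n(w)$ by $g(w_0)\de_n$ uniformly over the critical boundary layer of width $\asymp\de_n$ centered at $w_0$.
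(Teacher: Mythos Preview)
Your approach is correct and shares the paper's key geometric inputs, but the organization is genuinely different in three places.

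\emph{Where you diverge.} (i) The paper does not compute the mean and variance of $\lambda(D\setminus P)$ separately. Instead it passes from cuts $C(w;\cdot)$ to tubular cuts $TC(w;\cdot)$ (which are monotone along inward normals), writes $\lambda(TC(\eta_n;\bde_n))=\lambda(T_{r_n^+})$ for the depth function $r_n^+(\zeta)=\max_j r_{W^j}(\zeta;\bde_n(W^j))$, applies Roccaforte's tube formula, and then shows $\BE|r_n^+(\zeta)/\de_n - g(\zeta)|\to 0$ pointwise using the deterministic upper bound $r_n^+\le r_\infty\le g\de_n+C\de_n^{1+\alpha/2}$ from Theorem~\ref{T:depth}. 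This gives $L^1$-convergence of the rescaled volume with no variance argument; your Poincar\'e/Chebyshev step anticipates what the paper only invokes later for Theorem~\ref{t:expvarpoi}. (ii) Your Fubini identity for the mean is the ``horizontal-slice'' version of the paper's depth computation: in tube coordinates $z=\zeta+t\hat\eta(\zeta)$, your $\mu_n(z)$ equals $\sigma_f$ of the \emph{slice} visibility set $\{w:|L(z,w)|<\bde_n(w)\}$, whereas the paper uses the \emph{cumulative} visibility region $G_t(\zeta;\bde_n)=\{w:r_w(\zeta;\bde_n(w))\ge t\}$. These differ because a normal ray can exit and re-enter a cut; the paper neutralizes this by the tubular-cut step together with \eqref{eq:approxcut}. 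Your route works too, but you will either need a slice-visibility analogue of Corollary~\ref{C:visibility} or the same $C\!\to\!TC$ approximation. (iii) For $\beta_n$ the paper simply redoes Step~3 with $(1-\sigma_f(G_s))^n$ in place of $e^{-n\sigma_f(G_s)}$; your Poisson--binomial coupling is a legitimate alternative and correctly handles the containment hypothesis separately.

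\emph{What each approach buys.} The paper's depth-function route is cleaner: the deterministic bound on $r_\infty$ turns $|r_n^+/\de_n-g|$ into a signed quantity and delivers convergence in probability without any second-moment estimate. Your mean\,+\,variance route is more modular and, as you note, the Poincar\'e bound $\BV\lesssim n\de_n^{2(d+1)}$ is exactly the upper variance bound of Theorem~\ref{t:expvarpoi}, so you get that for free. One caution on your ``sharp transition'' sketch: for $t$ just above $g(w_0)\de_n$ the integrand $1-e^{-n\mu_n(z)}$ need not be small pointwise; what makes the argument go through is that this excess layer has thickness $O(\de_n^{1+\alpha/2})$ by Theorem~\ref{T:depth}, so its contribution is $o(\de_n)$ after integrating in $t$. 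That is precisely where the $\cont^\alpha$-regularity of $g$ enters.
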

The above theorem gives a simple condition under which the error in volume approximation decays to zero at a fixed rate as the number of facets of the random polyhedra increase.  While \eqref{e:cipdvpoi} gives the precise rate of volume approximation for a fixed $f$ and $g$, it leaves open the question of the the best possible rate of volume approximation among all random polyhedra; see Conjecture \ref{conj}.
\begin{theorem}[\bf  {$L^1$-Convergence and Variance Bounds}] 
\label{t:expvarpoi}
Let $f: bD \rightarrow (0,\infty)$ be a bounded probability density function that is bounded away from $0$, and $\bde_n=g\de_n$ be as in \eqref{eq:assrd}. Further, assume that $g$ is such that
 	\begin{equation}
 	\label{e:covexpvarpoi}
	 \BP(P(\eta_n; \bde_n) \subset  D) = 1-o\left(n^{-\left(1+\frac{2}{d}+\epsilon\right)}\right)\ \text{as}\ n\rightarrow\infty,
 	\end{equation}
for some $\epsilon > 0$.  Then, 
	\begin{equation}
	\label{e:expasymptotpoi}
	\lim_{n \to \infty} \BE \left |  { \left(\frac{n}{\log n}\right)^{1/d}}\delta_V(D,P(\eta_n; \bde_n)) - \int_{bD} g(z)\md \sigma(z) \right | = 0
	\end{equation}
and there exist $c_1, c_2>0$ such that
	\begin{equation}
  \label{e:varasymptotpoi}
  c_1  {\frac{1}{n^{1+2/d}(\log n)^{2+2/d}}}
	\leq \BV[\delta_V(D,P(\eta_n; \bde_n))] 
		\leq c_2 {\frac{(\log n)^{2+2/d}}{n^{1+2/d}}},  \quad n \geq 1.
	\end{equation}
If \eqref{e:covexpvarpoi} holds for $\beta_n$,  then \eqref{e:expasymptotpoi} and the upper bound in \eqref{e:varasymptotpoi} hold for $\beta_n$.  
\end{theorem}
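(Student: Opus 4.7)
The strategy is threefold: (i) deduce the $L^1$-convergence \eqref{e:expasymptotpoi} from the convergence in probability of Theorem~\ref{thm:cippoi} by establishing uniform integrability; (ii) obtain the variance upper bound in \eqref{e:varasymptotpoi} via the first-order Poincar\'e inequality for Poisson functionals (and the Efron--Stein inequality in the binomial case); and (iii) obtain the Poisson variance lower bound via the first-order chaos $L^2$-isometry. Throughout, I rely on the cut-volume estimate $\lambda(C(w;\de))\leq C\de^{d+1}$ (where $C(w;\de):=\{z\in D:\operatorname{dist}(z,\mathcal{H}_w bD)<\de\}$) and related visibility-region bounds, expected from Section~\ref{S:GeomEst}.

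For \eqref{e:expasymptotpoi}, decompose $\delta_V(D,P(\eta_n;\bde_n))= X_n^{\mathrm{m}}+X_n^{\mathrm{p}}$, where $X_n^{\mathrm{p}}:=\lambda(D)\1\{P(\eta_n;\bde_n)\not\subset D\}$ and $X_n^{\mathrm{m}}$ is the complementary piece. By \eqref{e:covexpvarpoi}, $(n/\log n)^{1/d}\BE[X_n^{\mathrm{p}}]=o(1)$, so the penalty term is asymptotically negligible. For $Y_n:=(n/\log n)^{1/d}X_n^{\mathrm{m}}$, Theorem~\ref{thm:cippoi} gives convergence in probability to $\int_{bD}g\,\md\sigma$, so \eqref{e:expasymptotpoi} follows once $\sup_n\BE[Y_n^2]<\infty$. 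The variance bound below gives $\BV(Y_n)=O((\log n)^2/n)\to 0$, while a uniform bound on $\BE[Y_n]$ comes from the pointwise identity $\lambda(D\setminus P(\eta_n;\bde_n))=\int_D\1\{\eta_n\cap V_n(z)\neq\emptyset\}\,\md\lambda(z)$, where $V_n(z):=\{w\in bD:\operatorname{dist}(z,\mathcal{H}_w bD)\leq\bde_n(w)\}$ is the visibility region of $z$; Fubini and the void probability formula for Poisson processes yield
\[
\BE[\lambda(D\setminus P(\eta_n;\bde_n))]=\int_D\bigl(1-\exp\{-n(f\sigma)(V_n(z))\}\bigr)\,\md\lambda(z)=O(\delta_n),
\]
since $V_n(z)=\emptyset$ outside a collar of $bD$ of width $O(\delta_n)$. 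Hence $\BE[Y_n]=O(1)$ and $Y_n$ is $L^2$-bounded.

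For the variance upper bound, the Poincar\'e inequality for Poisson functionals (Appendix~\ref{s:probtool}) gives
\[
\BV(F)\leq \int_{bD}\BE\bigl[(D_wF)^2\bigr]\,nf(w)\,\md\sigma(w),\qquad D_wF:=F(\eta_n\cup\{w\})-F(\eta_n),
\]
with $F:=\delta_V(D,P(\eta_n;\bde_n))$. By monotonicity of induced polyhedra in the constraint set, adding a point at $w$ only shrinks $P(\eta_n;\bde_n)$ and in particular preserves containment in $D$, so on $\{P(\eta_n;\bde_n)\subset D\}$ one has $0\leq D_wF\leq \lambda(C(w;\bde_n(w)))\leq C\bde_n(w)^{d+1}$. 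The contribution from the complementary event is bounded by $n\lambda(D)^2\BP(P(\eta_n;\bde_n)\not\subset D)$, which is absorbed in the main term via \eqref{e:covexpvarpoi}. Combining,
\[
\BV(F)\leq Cn\delta_n^{2(d+1)}\int_{bD}g^{2(d+1)}f\,\md\sigma=O\bigl((\log n)^{2+2/d}/n^{1+2/d}\bigr).
\]
The binomial case follows from the Efron--Stein inequality applied to the replace-one-point increments, which obey the same geometric cut-volume bound.

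For the variance lower bound in the Poisson case, I will use the first-order chaos $L^2$-isometry, $\BV(F)\geq\int_{bD}(\BE[D_wF])^2\,nf(w)\,\md\sigma(w)$. The principal technical obstacle is a pointwise lower bound on $\BE[D_wF]$: although $D_wF\leq C\bde_n(w)^{d+1}$ always, most of a freshly inserted cut may already be uncovered by cuts generated by nearby existing points of $\eta_n$. My plan is to isolate, for each $w\in bD$, a local event $A_w$ on which $\eta_n$ leaves a small cap $\wt S(w;r_n)$ (in the sense of Figure~\ref{F:parabola}) vacant, so that adding a point at $w$ contributes a positive fraction of its cut volume, i.e., $D_wF\geq c\bde_n(w)^{d+1}$ on $A_w$. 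Choosing $r_n$ so that $\BP(A_w)$ decays only polylogarithmically in $n$---balancing the Poisson mass $n\cdot\sigma(\wt S(w;r_n))$ against $\log n$---gives $\BE[D_wF]\geq c\bde_n(w)^{d+1}/(\log n)^{K}$ for some finite $K$, whence $\BV(F)\geq cn\delta_n^{2(d+1)}/(\log n)^{2K}$, matching the stated bound $c_1/(n^{1+2/d}(\log n)^{2+2/d})$ after optimizing $K$. The polylogarithmic slack in this balancing is precisely what accounts for the gap between the two sides of \eqref{e:varasymptotpoi}.
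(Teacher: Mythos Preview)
Your overall architecture matches the paper's: Poincar\'e/Efron--Stein for the upper bound, the first-chaos lower bound $\BV(F)\geq\int(\BE[D_wF])^2\,nf\,\md\sigma$ combined with an ``isolated point'' event for the lower bound, and a transfer from convergence in probability to $L^1$. The lower-bound plan is essentially what the paper does (the paper uses Proposition~\ref{P:NN2} to produce the volume gain $\gtrsim s^{2(d+1)}\delta_n^{d+1}$ on the isolation event and then optimizes $s\asymp(\log n)^{-1/d}$). Your $L^1$ route via uniform integrability is a legitimate alternative to the paper's more direct approach of simply taking expectations in the three-term triangle inequality already established in the proof of Theorem~\ref{thm:cippoi}.

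There is, however, a real gap in your variance \emph{upper} bound. You apply the Poincar\'e inequality directly to $F=\delta_V$ and split into containment and non-containment. On containment you correctly get $|D_wF|\leq\lambda(C(w;\bde_n(w)))\leq C\delta_n^{d+1}$, but on non-containment (where adding $w$ can flip the indicator) you only have $|D_wF|\leq\lambda(D)$, yielding the error term
\[
n\,\lambda(D)^2\,\BP\bigl(P(\eta_n;\bde_n)\not\subset D\bigr)=o\bigl(n^{-(2/d+\epsilon)}\bigr).
\]
This is \emph{not} dominated by the main term $Cn\delta_n^{2(d+1)}=C(\log n)^{2+2/d}/n^{1+2/d}$ unless $\epsilon\geq 1$; for $0<\epsilon<1$ your bound is genuinely weaker than the theorem asserts. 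The paper avoids this by applying Poincar\'e to the penalty-free functional $\lambda(TC(\eta_n;\bde_n))$ (equivalently $\lambda(C(\eta_n;\bde_n))$), for which $|D_z(\cdot)|\leq C\delta_n^{d+1}$ \emph{unconditionally}, and then transfers the bound to $\delta_V$ via the variance--covariance inequality
\[
\bigl|\BV[\delta_V]-\BV[\lambda(C)]\bigr|\leq \BV[\delta_V-\lambda(C)]+2\sqrt{\BV[\delta_V-\lambda(C)]\,\BV[\lambda(C)]}.
\]
Here $\BV[\delta_V-\lambda(C)]\leq\lambda(D)^2\BP(\text{non-containment})$, and it is the Cauchy--Schwarz cross term $\sqrt{\BP(\text{non-containment})\cdot n\delta_n^{2(d+1)}}$ that supplies the extra half-power of the non-containment probability needed to beat $n^{-(1+2/d)}$ for every $\epsilon>0$. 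You should reroute your upper bound through $\lambda(C(\eta_n;\bde_n))$ in the same way.
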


 {Note that the assumption in \eqref{e:covexpvarpoi} is to ensure that the dominating contribution is from the volume of the random polyhedron and not the non-containment event.  Our proofs give explicit bounds in terms of non-containment probabilities, and it may be possible to weaken \eqref{e:covexpvarpoi}. For instance, a quick inspection of the proof of Theorem~\ref{t:expvarpoi} shows that to obtain the expectation asymptotics in \eqref{e:expasymptotpoi}, it suffices to assume that the non-containment probability is $o\Big( (\frac{\log n}{n})^{1/d} \Big)$ as $n\rightarrow\infty$, but we have assumed \eqref{e:covexpvarpoi} to give a combined result for expectation and variance. }

To state a normal approximation result, we fix a metric of approximation for random variables.  Given two random variables, $X$ and $Y$, {\em the Wasserstein distance} $d_W(X,Y)$ is defined as
\begin{equation}
\label{e:defndW}
d_W(X,Y) := \sup_{h \in \mathrm{Lip}(1)} | \BE[h(X)]  - \BE[h(Y)] |,
\end{equation}
where $\mathrm{Lip}(1)$ is the class of all functions $h : \R \to \R$ with Lipschitz constant at most $1$.  The Wasserstein distance metrizes the topology of weak convergence i.e.,  if $d_W(X_n,X) \to 0$ as $n \to \infty$ then $X_n \stackrel{D}{\longrightarrow} X$ as $n \to \infty$ where $\stackrel{D}{\longrightarrow}$ denotes convergence in distribution or weak convergence. Henceforth, $Z$ denotes a standard normal variable.
\begin{theorem}[\bf Central Limit Theorem]
\label{t:cltpoi}
Let $f: bD \rightarrow (0,\infty)$ be a bounded probability density function that is bounded away from $0$, and $\bde_n=g\de_n$ be as in \eqref{eq:assrd}. Further, assume that $g$ is such that \eqref{e:covexpvarpoi} holds for some $\epsilon > 0$. Then,
  \begin{equation}
  \label{e:normapproxdvpoi}
   d_{W}\left(\frac{ \delta_V(D,P(\eta_n; \bde_n)) - \E[ \delta_V(D,P(\eta_n; \bde_n))]}{\sqrt{\BV[ \delta_V(D,P(\eta_n; \bde_n))]}},  Z\right) 
\leq C  {\frac{(\log n)^{6 + 6/d}}{n^{\frac{\min\{\epsilon,1\}}{2}}}},    \quad n \geq 1. 
    \end{equation}
 If \eqref{e:covexpvarpoi} and the lower bound in \eqref{e:varasymptotpoi} hold for $\beta_n$, then \eqref{e:normapproxdvpoi} holds for $\beta_n$.  
  \end{theorem}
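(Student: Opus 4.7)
The plan is to apply the second-order Poincar\'e inequality for Poisson functionals recalled in Appendix~\ref{s:probtool}, which bounds $d_W\!\big((F-\BE F)/\sqrt{\BV F},Z\big)$ by explicit fourth-moment expressions in the first and second add-one-cost operators $D_w F$ and $D^2_{w,w'}F$. Setting $F_n := \delta_V(D,P(\eta_n;\bde_n))$ and $\Omega_n := \{P(\eta_n;\bde_n)\subset D\}$, I would first decompose $F_n = G_n + \lambda(D)\mathbf{1}_{\Omega_n^c}$ with $G_n := \lambda(D\setminus P(\eta_n;\bde_n))\mathbf{1}_{\Omega_n}$. A standard triangle-inequality argument in $d_W$, combined with assumption~\eqref{e:covexpvarpoi} and the variance lower bound in~\eqref{e:varasymptotpoi}, shows that the normalized non-containment term contributes an additive error of order $n^{-\epsilon/2}$ (up to polylog factors), which accounts for the $\epsilon$-regime in the exponent $\min\{\epsilon,1\}/2$.

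The heart of the argument is the geometric estimation of the difference operators of $G_n$. The insertion of a new source point $w$ carves out the cut $C(w;\bde_n(w)) := \{z\in D : \mathrm{dist}(z,\mathcal H_w bD) < g(w)\de_n\}$, so on the containment event $|D_w G_n|\leq\lambda(C(w;\bde_n(w)))$; the model form of the boundary (Lemma~\ref{L:Model}) and the cut-volume estimate (Lemma~\ref{L:Lest}) yield a uniform polynomial-in-$\de_n$ bound. The second-order operator $D^2_{w,w'}G_n$ is nonzero only when both cuts $C(w;\bde_n(w))$ and $C(w';\bde_n(w'))$ simultaneously intersect the currently uncovered region, in which case it is bounded by $\lambda\big(C(w;\bde_n(w))\cap C(w';\bde_n(w'))\big)$; a Vitali-type covering argument (Lemma~\ref{L:Vitali}), together with the quasi-metric structure on $bD$ from Lemma~\ref{L:quasimetric}, reduces the required double integrals over pairs $(w,w')$ to single integrals of the same type as in the variance proof of Theorem~\ref{t:expvarpoi}. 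Substituting these bounds into the second-order Poincar\'e inequality and dividing by $\BV(F_n)^2$ from~\eqref{e:varasymptotpoi} yields the principal rate $n^{-1/2}$, with the polylog loss of $(\log n)^{6+6/d}$ arising from the gap between the upper and lower variance bounds.

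For the binomial process $\beta_n$, I would invoke the analogous normal approximation result for symmetric functions of i.i.d.\ samples (see Appendix~\ref{s:probtool}), in which the add-one-cost operators are replaced by resampling differences $W^i\mapsto\widetilde W^i$. The geometric bounds on cut volumes and overlaps transfer verbatim, while the assumed variance lower bound plays the role of~\eqref{e:varasymptotpoi}. The main technical hurdle I anticipate lies in the fourth-moment estimate for $D^2_{w,w'}G_n$: its non-vanishing requires the joint non-coverage of two overlapping cuts by the remaining points, so one must integrate the geometric overlap $\lambda(C(w;\bde_n(w))\cap C(w';\bde_n(w')))$ against a probability of the form $\BP\big(\text{neither cut lies in } P(\eta_n\setminus\{w,w'\};\bde_n)\big)$ without losing more than polylog factors. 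This is precisely where the coverage-type probability estimates underlying Theorem~\ref{thm:cippoi} interact most delicately with the quasi-metric geometry of strongly $\C$-convex boundaries, and is where the margin between the variance upper and lower bounds is the tightest.
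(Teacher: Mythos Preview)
Your high-level plan is right --- second-order Poincar\'e, cut-volume bounds on $D_w$ and $D^2_{w,w'}$, separate handling of non-containment --- and this is exactly what the paper does. But two points of your execution diverge from the paper in ways that matter.

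First, your decomposition $F_n = G_n + \lambda(D)\mathbf 1_{\Omega_n^c}$ with $G_n = \lambda(D\setminus P)\mathbf 1_{\Omega_n}$ is awkward for the Poincar\'e step. Your bound $|D_wG_n|\le \lambda(C(w;\bde_n(w)))$ holds only on $\Omega_n$; off $\Omega_n$, adding $w$ can flip the indicator (a single extra cap may complete coverage of $bD$), and then $D_wG_n = \lambda(D\setminus P(\eta_n\cup\{w\};\bde_n))$, which is of order $\de_n$, not $\de_n^{d+1}$. Tracking these ``transition'' contributions through the $T_i$ terms is possible but messy, and for small $\epsilon$ they are not obviously dominated. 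The paper avoids all of this by reducing \emph{before} the Poincar\'e step to the indicator-free functional $\lambda(C(\eta_n;\bde_n)) = \lambda(D\setminus P(\eta_n;\bde_n))$: a triangle inequality in $d_W$ splits into (I) $\de_V$ vs.\ $\lambda(C)$ under the $\de_V$-variance normalization, (II) change of normalization, and (III) CLT for $\lambda(C)$. Steps (I) and (II) use only $|\de_V(n)-\lambda(C(\eta_n;\bde_n))|\le \lambda(D)\mathbf 1_{\Omega_n^c}$ together with \eqref{e:covexpvarpoi} and the variance bounds \eqref{e:varasymptotpoi}; this is where $n^{-\epsilon/2}$ appears. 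Step (III) then enjoys the clean deterministic bounds $|D_z\lambda(C)|\le \lambda(C(z;\bde_n(z)))\lesssim \de_n^{d+1}$ and $|D^2_{z,w}\lambda(C)|\le \lambda(C(z;\bde_n)\cap C(w;\bde_n))\lesssim \de_n^{d+1}\mathbf 1[C(z;\cdot)\cap C(w;\cdot)\neq\emptyset]$, valid for every configuration.

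Second, the ``main technical hurdle'' you anticipate --- integrating overlap volumes against a joint non-coverage probability --- does not actually arise. Once you work with $\lambda(C(\eta_n;\bde_n))$, the $T_1,T_2,T_3$ bounds are purely geometric: the only probabilistic input is the intensity $n\sigma_f$, and the only non-trivial integral is $\int_{bD}\sigma\{\zeta:C(\zeta;\|g\|_\infty\de_n)\cap C(z;\|g\|_\infty\de_n)\neq\emptyset\}^2\,\md\sigma_f(z)$, handled by Proposition~\ref{P:NN1} (the nearest-neighbor area bound $\sigma(N(z;\de))\lesssim\de^d$), not by Lemma~\ref{L:Vitali}. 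No coverage probabilities enter $T_i$; the polylog loss $(\log n)^{6+6/d}$ comes solely from dividing the resulting $T_i$ bounds by the variance lower bound in \eqref{e:varasymptofnpoi}.
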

We have made an assumption on the decay of $\delta_n$ in \eqref{eq:assrd}, and on the asymptotic behavior of containment probabilities, as in \eqref{eq:covdelta} and \eqref{e:covexpvarpoi}. The following lemma assures us that these assumptions are compatible as well as essential.  
\begin{Prop}\label{lem:rdelta} Let $f: bD \rightarrow (0,\infty)$ be a bounded probability density function that is bounded away from $0$, and $\bde_n=g\de_n$ with $\delta_n$ as in \eqref{eq:assrd} and $g$ is a continuous function.  Then, the following hold.
  \begin{itemize}
  \item[(a)] There exists a $c_D  >0$ such that if $\min_{z\in bD} g(z)< c_D$,  then
    $$ \BP(P(\eta_n; \bde_n) \subset  D) \rightarrow 0 \mbox{ as $n \rightarrow \infty$}.$$
    \item[(b)] For any $\alpha >0$, there exists a $C_{D} >0$ such that if $\min_{z \in bD}g(z) > C_D$, then
      \begin{equation}\label{eq:crate}  \BP(P(\eta_n; \bde_n) \subset  D) = 1 -   {\mathcal O}(n^{-\alpha}) \mbox{ as $n \rightarrow \infty$}.
        \end{equation}
      \item[(c)] Both (a) and (b) also hold when $\eta_n$ is replaced by $\beta_n$.
    \end{itemize}
\end{Prop}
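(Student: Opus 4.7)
The plan is to reduce the containment event $\{P(\eta_n;\bde_n)\subset D\}$ to a boundary coverage event via Lemma~\ref{L:inc=cov}, and then apply classical coverage arguments to the caps $S(w;\bde_n(w))=\{z\in bD:\text{dist}(z,\mathcal H_w bD)\le\bde_n(w)\}$. Write $C_{\text{cov}}(\eta_n,\bde_n) := \{bD\subset\bigcup_{W\in\eta_n}S(W;\bde_n(W))\}$. By Lemma~\ref{L:inc=cov}, up to a negligible technicality, $\{P(\eta_n;\bde_n)\subset D\}=C_{\text{cov}}(\eta_n,\bde_n)$, so the task is reduced to analysing when $bD$ is (not) covered. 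The key input from the geometric estimates of Section~\ref{S:GeomEst} is the cap-area asymptotic $\sigma(S(w;\bde_n(w)))=(\gamma(w)+o(1))\,g(w)^d\,\tfrac{\log n}{n}$, where $\gamma(w)$ is a curvature-dependent constant bounded above and below on $bD$; this is essentially the reason $\delta_n=(\log n/n)^{1/d}$ is the critical scale. Combined with the quasi-metric structure of Lemma~\ref{L:quasimetric} and the Vitali-type lemma (Lemma~\ref{L:Vitali}), this places the problem squarely in the framework of classical coverage thresholds of \citet{flatto1977random,hall1985coverage,janson1986random}.

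For part (b), I would use the duality $z\notin\bigcup_{W\in\eta_n}S(W;\bde_n(W))$ iff no point of $\eta_n$ lands in the dual cap $T(z;\bde_n) := \{w\in bD : z\in S(w;\bde_n(w))\}$. Using the quantitative version of \eqref{E:strcvx2} together with Lemma~\ref{L:Model}, one shows $\sigma(T(z;\bde_n))\ge c\,(\min_{w}g(w))^d\,\tfrac{\log n}{n}$ for $n$ large. Since $\eta_n$ has intensity $nf\sigma$ with $f$ bounded below by some $f_0>0$, the Poisson void probability gives
\begin{equation*}
\BP\bigl(z\notin\textstyle\bigcup_{W\in\eta_n}S(W;\bde_n(W))\bigr)\le \exp\!\bigl(-c\,f_0\,(\min g)^d\,\log n\bigr)=n^{-c\,f_0\,(\min g)^d}.
\end{equation*}
A Vitali-type cover of $bD$ by $O(n/\log n)$ quasi-balls of radius $\sim\delta_n$, together with Hölder regularity of $g$ to transfer coverage from the centers of these balls to the whole of $bD$, followed by a union bound, then yields
$$\BP(P(\eta_n;\bde_n)\not\subset D)\le C\,\frac{n}{\log n}\,n^{-c\,f_0\,(\min g)^d}.$$
Choosing $\min g > C_D$ with $C_D$ large enough so that $c\,f_0\,C_D^d > \alpha + 1$ gives (b). The main technical obstacle here is the Hölder transfer step: the caps $S(w;\bde_n)$ are anisotropic (size $\sqrt{\delta_n}$ in $2d-2$ directions and $\delta_n$ in one), so the discretisation must be done in the quasi-metric of Lemma~\ref{L:quasimetric}, and one must verify that a slight shrinkage $g\mapsto g-\varepsilon$ still covers the union of caps around all points within a single Vitali cell.

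For part (a), I would construct, inside a region where $g$ is close to its minimum, a family of $M_n\asymp n/\log n$ pairwise quasi-metric-disjoint caps $S(w_k;\bde_n(w_k))$ such that the events $A_k := \{S(w_k;\bde_n(w_k))\cap\eta_n=\emptyset\}$ are independent Poisson void events (disjointness is what gives independence for Poisson processes). Each $\BP(A_k)\ge n^{-C(\min g)^d}$ by the upper bound on $\sigma(S(w_k;\bde_n(w_k)))$. Choosing $c_D$ so that $C\,c_D^d<1$ makes $M_n\BP(A_1)\to\infty$, and independence yields $\BP(\bigcup_k A_k)\to 1$. But $\bigcup_k A_k$ entails at least one uncovered point of $bD$, hence $P(\eta_n;\bde_n)\not\subset D$, proving (a). Part (c) is a routine de-Poissonisation: couple $\beta_n$ and $\eta_n$ using the Poisson representation, so that $\eta_{n/2}\subset\beta_n\subset\eta_{2n}$ on the event $\{n/2\le N_n\le 2n\}\cup\{n/2\le N_{2n}\}$, which has probability $1-e^{-cn}$ by standard Poisson concentration; this preserves both conclusions up to a harmless constant in $\alpha$ and $c_D$, $C_D$. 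The main obstacle throughout is the anisotropy of the caps, which I expect can be handled uniformly by the quasi-metric framework and the Vitali lemma already assembled in Section~\ref{S:GeomEst}.
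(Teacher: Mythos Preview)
Your approach is essentially that of the paper: reduce containment to boundary coverage via Lemma~\ref{L:inc=cov}, discretise $bD$ by the Vitali-type net of Lemma~\ref{L:Vitali}, and then run the standard coverage dichotomy (union bound for $(b)$, disjoint-void independence for $(a)$). The paper's execution differs in a few details worth noting.

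For $(b)$, the paper does not pass through the dual caps $T(z;\bde_n)$; instead it works directly with the Vitali net $\{p^j\}$: if every $S(p^j;b\delta_n)$ contains a Poisson point, the quasi-triangle inequality gives $S(p^j;\delta_n)\subset S(\eta_n;C_D\delta_n)$, and $\min g\ge C_D$ then forces full coverage. This avoids your worry about H{\"o}lder transfer entirely---only continuity of $g$ (indeed only measurability and boundedness, as the paper remarks) is needed.

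For $(a)$ there is a small gap in your argument. The event $A_k=\{S(w_k;\bde_n(w_k))\cap\eta_n=\emptyset\}$ says $|L(W,w_k)|>g(w_k)\delta_n$ for all $W\in\eta_n$, whereas ``$w_k$ is uncovered'' means $|L(w_k,W)|>g(W)\delta_n$ for all $W\in\eta_n$. These differ both in the order of arguments (quasi-symmetry) and in the weight ($g(w_k)$ versus $g(W)$, where $g(W)$ may be large for distant $W$). The paper handles this by a two-step reduction: first prove non-coverage of a fixed neighbourhood $S(z_0;\varepsilon_0)$ by \emph{constant-size} caps $S(\eta_n;c_D\delta_n)$, then observe that only Poisson points $W\in S(z_0;2\varepsilon_0)$ can have caps meeting $S(z_0;\varepsilon_0)$ for $n$ large, and for those $g(W)<c_D$ by the choice of neighbourhood. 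Your argument is repairable along exactly these lines, but as written the implication ``$\bigcup_k A_k\Rightarrow$ uncovered point'' is not immediate.

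For $(c)$, the paper is slightly more economical: for $(b)$ it uses $(1-x)^n\le e^{-nx}$ directly without any coupling, and for $(a)$ it couples $\beta_n$ inside a single Poisson process of intensity $(1+a)n$ rather than sandwiching. Your sandwich coupling would also work.
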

 It is clear from our proof that Part $(b)$ holds more generally for any measurable $g$. A more careful analysis of our proofs can yield estimates on $c_D,C_D$ but given that these are likely to be far from the sharp bounds,  we avoid quantifying them here.
\subsection{Remarks}
\label{s:remltthms}
We now remark in detail about the assumptions in our results,  the nature of results and how they compare to real inscribed polyhedra.  \\

\noindent {\em Containment condition.}  Theorem \ref{thm:cippoi} assumes that containment happens with high probability i.e.,  \eqref{eq:covdelta} holds.  Suppose not i.e.,  $\liminf_{n \to \infty} \BP(P(\eta_n; \bde_n) \subset  D) < 1$.  Then we can show that along some subsequence,  $\delta_V(D,P(\eta_n;\bde_n))$ converges in probability to a non-degenerate random variable supported on $\{0,\lambda(D)\}$. Thus, the error doesn't converge to $0$ if \eqref{eq:covdelta} is violated.\\

\noindent {\em Optimizing the size function.} As mentioned earlier, non-trivial size functions occur in similar models in the real case.  However,  in the real case,  one is able to obtain the decay rate of the optimal $\bde_n$ with explicit constants because the transition from containment probability to coverage probability on the boundary is done via a suitable Riemannian metric, following which the probabilities are estimated using the classical coverage results of \cite{flatto1977random, hall1985coverage,janson1986random}. In our case, the rate of decay of $\bde_n$, i.e., Lemma \ref{lem:rdelta}, is also obtained by reducing the question of containment to that of coverage of the boundary by the so-called caps of the approximating polyhedra. However, due to the inhomogeneous nature of the caps, this is at best a coverage problem in a sub-Riemannian metric, for which no coverage results of the type in \cite{flatto1977random,hall1985coverage,janson1986random} are available. This is one of the obstacles in proving Conjecture~\ref{conj}. \\

\noindent {\em Comparison with real polyhedra.} In Section~\ref{SS:Summary}, we have already compared Theorem \ref{thm:cippoi} with the results known for randomly inscribed and circumscribed polyhedra in the real setting.  We now compare the other limit theorems in this paper with those for randomly inscribed polyhedra (see \cite{reitzner2002random,reitzner2003random,richardson2007random,richardson2008inscribing, thale2018central, turchi2018limit}).  Recall that in these models, $K\subset\rl^d$ is a smooth convex body,  
the polyhedron, say $P_n$, is constructed as the convex hull of a point process (Poisson or binomial) on $bK$, and one studies the asymptotics of $V_n := \lambda(K \setminus P_n)$. It is known that
\begin{itemize}
\item [$(a)$] $\BE[V_n]\sim v^{*}_{\R} n^{-2/(d-1)}$ as $n\rightarrow\infty$ (\cite[Theorem 1]{reitzner2002random}),
\item [$(b)$] $\BV[V_n]=\Theta(n^{-1 - 4/(d-1)})$ as $n\rightarrow\infty$ (\cite[Theorem 8]{reitzner2003random} and \cite[Theorem 1.1]{richardson2008inscribing}),
\item [$(c)$] $d_K\left(\frac{V_n - \BE[V_n]}{\sqrt{BV[V_n]}},Z\right)  \leq Cn^{-1/2}(\log n)^{3(1 + 2/(d-1))}$ (\cite[Theorem 1 and Section 4.1]{thale2018central}),
\end{itemize}%
where $d_K$ is the Kolmogorov distance and $v^{*}_{\R}$ is an explicit constant described in terms of the Blaschke surface area measure. Up to logarithmic factors,  our expectation asymptotics and variance bounds resemble the above results with the exponent $(d-1)$ replaced by $2d$. The error in normal approximation has an extra $\epsilon$ dependence owing to assumption \eqref{e:covexpvarpoi}.   An expectation result similar to $(a)$ is proven for randomly circumscribed polytopes in \cite{boroczky2004approximation}, but we are not aware of variance bounds or a central limit theorem for this model.   Similar results are known if the point process (Poisson or binomial) is in $K$ (see \cite{reitzner2010random}), where the exponent is $(d+1)$ instead of $(d-1)$.  Exact variance asymptotics have also been obtained for real random polytopes in \cite[Corollary 1.1 and 1.2]{calka2014variance} (see also  \cite[Chapter 5]{stemeseder2014random}) via scaling transforms.  \\

\noindent {\em On the metric of normal approximation.}  Our proof for the normal approximation result (Theorem \ref{t:cltpoi}) uses the second-order Poincar\'e inequalities for Wasserstein distance in \cite{last2016normal,lachieze2017new} (see Appendix \ref{s:probtool}).  One can instead use the second-order Poincar\'e inequalities for the stronger Kolmogorov distance from \cite{last2016normal,lachieze2017new} but this involves bounding some additional terms. Though this should be possible using our current estimates,  we avoid doing so here.  We expect that the rate of convergence will be of same order even under Kolmogorov distance. \\

\section{Geometric estimates}\label{S:GeomEst}

In this section, we lay the groundwork for the deterministic aspects of our main proofs. Subsection~\ref{SS:cutscapsvis} contains some new definitions that will be used throughout the paper. The subsequent subsections contain some key local estimates. For a quick summary of these estimates, the reader may jump to the statements of Theorems~\ref{T:depth} and \ref{T:CutsCapsGeneral} and Propositions~\ref{P:NN1} and \ref{P:NN2}.  We also note that the assumption of $\alpha$-H{\"o}lder continuity on $g$ is only essential to the proofs of \eqref{E:maxdepth} in Theorem~\ref{T:depth}, Corollary~\ref{C:visibility}, and Proposition~\ref{P:NN2}.

\begin{ntn} The following notation will be used throughout the paper. 

\begin{enumerate}[label=(\roman*)]
	\item For a point $z\in\Cd$
		\begin{itemize}
		\item $(z_1,...,z_d)$ denotes $z$ in complex coordinates,
		\item $(x_1,y_1,...,x_d,y_d)$ denotes $z$ in real coordinates, i.e., $z_j=x_j+iy_j$, $j=1,...,d$, 
		\item $z'=(z_1,...,z_{d-1})\in\C^{d-1}$ denotes the projection of $z$ onto the space $\{z:z_d=0\}$, 
		\item $||z||$ denotes the Euclidean norm of $z$.
		\end{itemize}

\item For $z,w\in\Cd$, $\left<z,w\right>=z_1w_1+\cdots+z_dw_d$. Thus, $||z||=\left<z,\zbar\right>^{1/2}$.

\item For $z\in\rl^d$ and $r>0$, $B_w(r)$ denotes the Euclidean ball of radius $r$ centered at $z$. 

\item $\lambda$ and $\lambda_{2d-1}$ denote the Lebesgue volume measures on $\Cd\cong\rl^{2d}$ and $\C^{d-1}\times\rl\cong\rl^{2d-1}$, respectively. 

\item For a $\cont^1$ domain $D\subset\Cd$, 
	\begin{itemize}
		\item $\hat\eta(w)=(\hat\eta_1(w),...,\hat\eta_d(w))$ denotes the inner unit normal to $bD$ at $w\in bD$,
		\item $\sigma$ denotes the Euclidean surface area measure on $bD$.
	\end{itemize}
\item $\GL(m;\C)$ denotes the space of $m\times m$ invertible complex matrices.

\item For a $\cont^1$ function $f:U\rightarrow\rl$ on an open set $U$,
\begin{itemize}
\item  $\bdy f=\left(\smpartl{f}{z_1},\cdots,\smpartl{f}{z_d}\right)
=\left(\frac{1}{2}\smpartl{f}{x_1}-\frac{i}{2}\smpartl{f}{y_1},
\cdots,\frac{1}{2}\smpartl{f}{x_d}-\frac{i}{2}\smpartl{f}{y_d}\right)$, when $U\subset\Cd$,
\item $\nabla f=\left(\smpartl{f}{x_1},\smpartl{f}{y_1},...,\smpartl{f}{x_{d}},\smpartl{f}{y_d}\right)$, when $U\subset\Cd$,
\item $\nabla f=\left(\smpartl{f}{x_1},\smpartl{f}{y_1},...,\smpartl{f}{x_{d-1}},\smpartl{f}{y_{d-1}},\smpartl{f}{x_{d}}\right)$, when $U\subset\C^{d-1}\times\rl$.
\end{itemize}  

\item Given a compact set $X\subset\Cd$ and $\alpha\in(0,1)$, the space of $\alpha$-H{\"o}lder functions on $X$ is
	\bes
		\cont^{\alpha}(X)=\left\{g:X\rightarrow\rl:||g||_{\alpha}:=||g||_\infty+\sup\limits_{\substack {x,y\in X\\ x\neq y}}\frac{|g(x)-g(y)|}{||x-y||^\alpha}<\infty\right\},
	\ees
where $||g||_\infty=\sup_{x\in X}|g(x)|$. 

\item Given a family of functions $\{f_w\}_{w\in bD}$ and a non-negative function $h$ on a normed space $\mathbb X$, $f_w=O_D(h)$ as $||x||_{\mathbb X}\rightarrow 0$ denotes the existence of $M_D,\de_D>0$ such that 
	\bes
		|f_w(x)|\leq M_Dh(x)\quad  \text{for all}\ w\in bD, ||x||_{\mathbb X}<\de_D.
\ees
The notation $f_w^1,f_w^2=O_D(h)$ as $||x||\rightarrow 0 $ simply denotes that $f_w^1=O_D(h)$ as $||x||\rightarrow 0$ and $f_w^2=O_D(h)$ as $||x||\rightarrow 0$.
\item Given real-valued functions $f,h$ on some set $S$, we say that $f\lesssim h$ if $f(x)\leq Ch(x)$ for some $C$ independent of $x\in S$. We abbreviate the statement $f\lesssim h\lesssim f$ to $f\approx h$. 
\end{enumerate}
\end{ntn}

\subsection{Preliminaries}\label{SS:cutscapsvis}

In this subsection, we introduce some sets and measurements associated to induced polyhedra. These will make crucial appearances in our main proofs. See Figure \ref{fig:cutcap} for an illustrative picture of some of these sets and measurements. Towards the end, we also collect some useful results from the literature. 

\begin{definition}\label{D:geom} Let $D\subset\Cd$, $d\geq 2$, be a bounded strongly $\C$-convex $\cont^2$ domain, and $\rho:\Cd\rightarrow\rl$ be a $\cont^2$ defining function of $D$. Let $\bde:bD\rightarrow(0,\infty)$ be a continuous function and $\de>0$. 

	\begin{enumerate}

	\item [$(a)$] For any set $S\subseteq bD$ and bounded function $g:S\rightarrow[0,\infty)$, the {\em $g$-tube of $S$} is the set
		\bes
			T_g(S):=\{w+t\hat\eta(w):w\in S,\ 0\leq t\leq g(w)\},
		\ees
		where the notation is abbreviated to $T_c(S)$ when $g|_S\equiv c$.
	\item [$(b)$] The function $L:\Cd\times bD\rightarrow\C$ is defined as 
		\bes
			L(z,w)=L_\rho(z,w):=\left<\frac{\bdy\rho(w)}{||\bdy\rho(w)||},w-z\right>
			=\frac{\left<\bdy\rho(w),w-z\right>}{||\bdy\rho(w)||}.
		\ees
Note that $\{z\in\Cd:L(z,w)=0\}$ is $\mathcal H_w bD$, the complex tangent plane to $bD$ at $w$. 
	\item [$(c)$] The {\em $\de$-cut at $w\in bD$} is the (open) set 
					\bes
						C(w;\de):=\left\{z\in D:|L(z,w)|<\de\right\}.
					\ees

	\item [$(d)$] The {\em $\de$-cap at $w\in bD$} is the (closed) set
					\bes
						S(w;\de):=\overline{C(w;\de)}\cap bD
						=\left\{\zt\in bD:|L(\zt,w)|\leq \de\right\}.
					\ees
	\item [$(e)$] For $w,\zt\in bD$, the {\em inner depth of $C(w;\de)$ at $\zt$} is 
				\bes
					r_w(\zt;\de):=\sup\{s\geq 0:\zt+t\hat\eta(\zt)\in \overline{C(w;\de)},
					\ \text{for all}\ 					 0\leq t\leq s\},
				\ees
and the {\em maximum ${\bde}$-visibility at $\zt$} is
				\bes
					r_\infty(\zt;\bde):=\sup\{r_w(\zt;\bde(w)):w\in bD\}.
				\ees

\item [$(f)$] The {\em outer depth of $C(w;\de)$} is
				\bes
					R_w(\de):=\inf\{r\geq 0:\overline{C(w;\de)}\subset T_r(bD)\}.
				\ees
and $R_\infty(\bde)=\sup\{R_w(\bde(w)):w\in bD\}$.

			\item [$(g)$] The {\em tubular $\de$-cut at $w\in bD$} is the set
		\bes
		TC(w;\de):=T_{r_w(\cdot;\de)}(S(w;\de))=	
	\left\{\zt+t\hat\eta(\zt):\zt\in S(w;\de),\ 0\leq t \leq r_w(z;\de)\right\}.
		\ees
			\item [$(h)$] For $s\geq 0$, the {\em $(\bde,s)$-visibility region at $\zt\in bD$} is the set
					\bes
						G_s(\zt;\bde):=\{w\in bD:r_w(\zt;\bde(w))\geq s\}
					=\left\{w\in bD:|L(\zt+t\hat\eta(\zt),w)|\leq \bde(w)\
						 \text{for}\ t\in[0,s]\right\}.
					\ees
	\end{enumerate}
\end{definition}
\bigskip

\begin{figure}[H]
\begin{center} 
\begin{overpic}[grid=false,tics=10,scale=0.5]{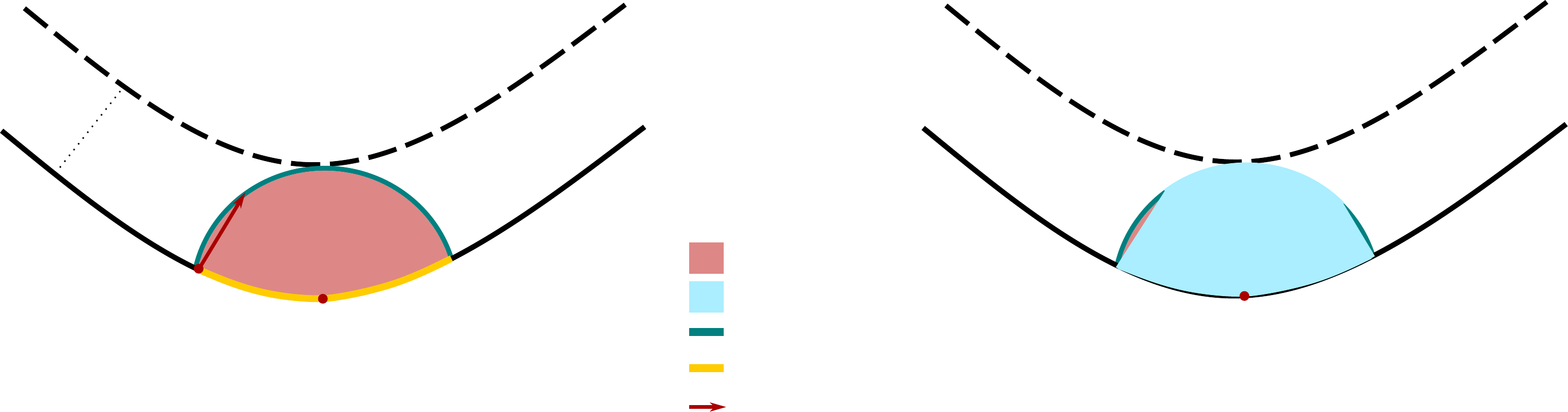} 
\put(187,51){\scriptsize Key}
\put(200,39){\scriptsize $C(w;\delta)$}
\put(200,29){\scriptsize $TC(w;\delta)$}
\put(200,19){\scriptsize $bC(w;\delta)\cap D$}
\put(200,10){\scriptsize $S(w;\delta)$}
\put(200,0){\scriptsize $z+r_w(z;\delta)\hat\eta(z)$}
\put(9,68){\rotatebox{56}{\scriptsize $R_w(\de)$}} 
\put(50,27){$z$}
\put(83,20){$w$}
\put(335,20){$w$}
\end{overpic}
\bigskip
\caption{A schematic of a $\de$-cut, $\de$-cap and tubular $\de$-cut with source $w$. In the figure on the right, the curve in teal and the region in pink are the portions of $bC(w;\delta)$ and $C(w;\delta)$, respectively, that are not contained in $TC(w;\delta)$.}
\label{fig:cutcap}
\end{center}
\end{figure}

The above definitions are independent of the choice of defining function for $D$ (see Lemma~\ref{L:deffn} below). For a fixed 
$\bde:bD\rightarrow (0,\infty)$, and $w\in bD$, $C(w;\bde)$, $S(w;\bde)$, $TC(w;\bde)$, $r_w(\cdot;\bde)$ and $R_w(\bde)$ denote $C(w;\bde(w))$, $S(w;\bde(w))$, $TC(w;\bde(w))$, $r_w(\cdot;\bde(w))$ and $R_w(\bde(w))$, respectively. Lastly, note that for all $w,\zt\in bD$,
\begin{itemize}
\item [$*$] $r_w(\zt;\bde(w))=-\infty$ if $\zt\notin S(w;\bde(w))$;
\item [$*$] $TC(w;\bde(w))\subseteq \overline{C(w;\bde(w))}\subseteq T_{R_\infty(\bde)}(bD)$; 
\item [$*$] $G_0(\zt;\bde)=\{w\in bD:\zt\in S(w;\bde(w))\}$, and $G_s(\zt;\bde)=\emptyset$ if $s>r_\infty(\zt;\bde)$.
\end{itemize}

The following two lemmas allow us to (locally) simplify the defining function of $D$. These results are well-known, but fragmented throughout the literature. For the reader's convenience, we state them here and relegate their proofs to Appendix~\ref{SS:curvature}.

\begin{lemma}\label{L:deffn} Let $D$ and $\rho$ be as in Definition~\ref{D:geom}. 
\begin{itemize}
\item [$(a)$] For any defining function $\wt\rho$ of $D$,  $L_{\wt\rho}(z,w)=L_{\rho}(z,w)$ for all $(z,w)\in\Cd\times bD$.
\item [$(b)$] For $M\in\GL(d;\C)$, if $\rho^*$ denotes the defining function $\rho\circ M^{-1}$ of $M(D)$, then
	\bes
		L_{\rho}(z,w)=\frac{||\bdy\rho^*(M(w))||}{||\bdy\rho(w)||}
		L_{\rho^*}(M(z),M(w)),\quad z\in \Cd, w\in bD. 
	\ees
\end{itemize}  
In particular, if $M$ is unitary, then $||\bdy\rho^*(M(w))||=||\bdy\rho(w)||$ for all $w\in bD$. Thus, the cuts, caps, depths, and visibility regions are preserved by $M$.
\end{lemma}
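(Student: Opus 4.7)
The plan is to treat the two parts separately using standard facts about defining functions and the chain rule for Wirtinger derivatives under holomorphic changes of coordinates.

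For part $(a)$, I would first invoke the standard fact that any two $\cont^2$ defining functions of the same $\cont^2$ domain $D$ agree up to a strictly positive smooth multiplier near $bD$: one can write $\wt\rho = h\rho$ on a neighborhood of $bD$, where $h$ is continuous and $h>0$. This is obtained by straightening $bD$ locally via the implicit function theorem and applying Hadamard's lemma; the positivity of $h$ follows from the fact that both $\rho$ and $\wt\rho$ are negative on $D$. Since $\rho\equiv 0$ on $bD$, the product rule for the Wirtinger derivatives gives
	\bes
		\bdy\wt\rho(w) = h(w)\bdy\rho(w)\quad\text{for all}\ w\in bD.
	\ees
Hence $\bdy\wt\rho(w)$ and $\bdy\rho(w)$ are positive real scalar multiples of each other, and after dividing by their norms in the definition of $L$, one concludes $L_{\wt\rho}(z,w) = L_{\rho}(z,w)$.

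For part $(b)$, the key observation is that since $M$ is $\C$-linear, the Wirtinger chain rule applied to $\rho^* = \rho\circ M^{-1}$ gives, viewing $\bdy\rho$ as a row vector and $M$ as a matrix $A\in\GL(d;\C)$,
	\bes
		\bdy\rho^*(Mw) = \bdy\rho(w)\,A^{-1}.
	\ees
Since the pairing $\left<\cdot,\cdot\right>$ is $\C$-bilinear (with no conjugation) and $Mw - Mz = A(w-z)$, the factors of $A^{-1}$ and $A$ collapse:
	\bes
		\left<\bdy\rho^*(Mw),\,Mw - Mz\right> = \bdy\rho(w)\,A^{-1}\,A(w-z) = \left<\bdy\rho(w),\,w-z\right>.
	\ees
Dividing by $\|\bdy\rho^*(Mw)\|$ and rearranging yields the stated scaling identity between $L_\rho(z,w)$ and $L_{\rho^*}(Mz,Mw)$.

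For the final assertion, when $A$ is unitary we have $A^{-1} = A^*$, so a short computation with $A^*A = I$ gives $\|\bdy\rho^*(Mw)\|^2 = \bdy\rho(w)A^*A\,\overline{\bdy\rho(w)}^{\,\top} = \|\bdy\rho(w)\|^2$, and therefore $L_\rho(z,w) = L_{\rho^*}(Mz,Mw)$. Since a unitary $M$ is additionally a Euclidean isometry of $\Cd\cong\R^{2d}$ that takes $D$ bijectively to $M(D)$ and intertwines the inner unit normals, i.e., $\hat\eta_{M(D)}(Mw) = M\hat\eta_D(w)$, combining this with the $L$-identity shows that all the set-theoretic and metric constructions of Definition~\ref{D:geom} are carried from $D$ to $M(D)$ by $M$. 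There is no serious obstacle; the proof is essentially a bookkeeping argument, and the only potential pitfall is keeping careful track of $\C$- versus $\R$-linearity and of the correct transformation law for the Wirtinger gradient (a row vector transforming by $A^{-1}$ on the right, as opposed to a column vector transforming on the left).
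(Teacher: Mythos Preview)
Your proof is correct and follows essentially the same approach as the paper: part $(a)$ via the positive multiplier $\wt\rho = h\rho$ and the product rule on $bD$ (one small slip --- you should say $h\in\cont^1$, not merely continuous, so that the product rule applies), and part $(b)$ via the Wirtinger chain rule under the $\C$-linear map $M$. The paper actually establishes part $(b)$ in the more general setting of M\"obius transformations $\M_\beta$, of which the $\GL(d;\C)$ case is the specialization $\beta_0\equiv 1$; but the underlying chain-rule computation collapsing $A^{-1}A$ is identical to yours.
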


\begin{lemma}\label{L:Model} Let $D$ be as in Definition~\ref{D:geom}. Then there exists an $\eps_D>0$ so that for each $w\in bD$, there is a unitary transformation $U_w$ on $\Cd$ such that $U_w(w)=0$, and  $\alpha_j, \beta_j, e\in\rl$ and $\gamma_j\in\C$ such that 
	\begin{itemize}
\item [$(a)$]  $\alpha_j, \beta_j, e,$ and $\gamma_j$ depend continuously on $w$;
\item [$(b)$]  $0\leq \beta_j<\alpha_j$ for $j=1,...,d-1$; and 
\item [$(c)$] $D^w:=B_0(\eps_D)\cap U_w(D)=\left\{(z',x_d+iy_d)\in B_0(\eps_D):y_d>f_w(z',x_d)\right\}$ for some $\rl$-valued $f_w$ such that
	\bes
		f_w(z',x_d)=\sum_{j=1}^{d-1}\alpha_j|z_j|^2+\sum_{j=1}^{d-1} 		
		\beta_j\rea(z_j^2)+
			\ima\sum_{j=1}^{d-1}\gam_j z_jx_d+ex_d^2
			+O_D(||(z',x_d)||^3)
	\ees
as $||(z',x_d)||\rightarrow 0$.
\end{itemize} 
In particular, the complex-restricted curvature of $D$ at $w$ is  $\nu_{D}(w)=\frac{1}{16}\prod_{j=1}^{d-1}(\alpha_j^2-\beta_j^2)$.
\end{lemma}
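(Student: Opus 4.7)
The plan is to construct $U_w$ as a translation followed by two successive unitary rotations. First, translate $w$ to the origin. Second, choose $U_w^{(1)}\in U(d)$, depending continuously on $w$ via a local section of the unitary frame bundle of $bD$, so that $U_w^{(1)}(\hat\eta(w)) = (0,\ldots,0,1)\in\R^{2d}$; this maps the complex tangent space at $0$ to $\{z_d=0\}$ and the real tangent space to $\{y_d=0\}$. The implicit function theorem applied to $\rho\circ(U_w^{(1)})^{-1}$ then produces a $\cont^2$ function $\tilde f_w$, vanishing to second order at $0$, whose graph locally describes the boundary; a uniform $\eps_D>0$ comes from compactness of $bD$. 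The quadratic part of $\tilde f_w$ at the origin decomposes uniquely as
$$H_w(z',z') + \rea\bigl(S_w(z',z')\bigr) + \ima\Bigl(\sum_{j=1}^{d-1}\tilde\gam_j^{\,w} z_j x_d\Bigr) + \tilde e_w\, x_d^2,$$
where $H_w$ is Hermitian, $S_w$ is complex symmetric, $\tilde\gam_j^{\,w}\in\C$, and $\tilde e_w\in\R$. Strong $\C$-convexity, via \eqref{E:strcvx1}, forces $H_w>0$ and, more sharply, $|v^T S_w v|<v^* H_w v$ for all nonzero $v\in\C^{d-1}$.

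The third step is to apply a block unitary $V_w\oplus 1\in U(d)$, where $V_w\in U(d-1)$ is chosen to simultaneously bring $(H_w, S_w)$ into diagonal form: $V_w^* H_w V_w = \operatorname{diag}(\alpha_1,\ldots,\alpha_{d-1})$ with $\alpha_j\in\R$, and $V_w^T S_w V_w = \operatorname{diag}(\beta_1,\ldots,\beta_{d-1})$ with $\beta_j\geq 0$. Under this final transformation the mixed terms reorganize into $\ima(\sum\gam_j^{\,w} z_j x_d)$ with $\gam^w = V_w^T\tilde\gam^w$ and $e_w = \tilde e_w$, yielding the claimed normal form. The bounds $0\leq\beta_j<\alpha_j$ follow immediately: evaluating the positive real form $H_w+\rea(S_w)$ on $v=e_j$ yields $\alpha_j+\beta_j>0$, and on $v=i e_j$ yields $\alpha_j-\beta_j>0$. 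Continuous dependence of $\alpha_j,\beta_j,\gam_j,e$ on $w$ comes from continuity of the chosen unitaries together with continuity of the spectral data, modulo local relabeling where multiplicities arise.

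The main obstacle is the simultaneous unitary diagonalization of $(H_w, S_w)$: the classical Autonne--Takagi factorization yields a unitary $V$ making $V^T S_w V$ diagonal non-negative but does not automatically diagonalize $H_w$. The resolution exploits the strict inequality $|v^T S_w v|<v^* H_w v$ from strong $\C$-convexity, which provides a strong-$\C$-convexity-adapted simultaneous normal form for such pairs; this can be established by a spectral construction combining eigenvector analysis of $H_w$ with the Takagi decomposition of an appropriately transformed $S_w$. Finally, the complex-restricted curvature formula $\nu_D(w) = \tfrac{1}{16}\prod_{j=1}^{d-1}(\alpha_j^2-\beta_j^2)$ follows from Definition~\ref{D:curvature} by direct computation: in the normalized coordinates, the real Hessian of $\tilde f_w|_{\{x_d=0\}}$ is block-diagonal with $2\times 2$ blocks $\operatorname{diag}(\alpha_j+\beta_j,\,\alpha_j-\beta_j)$ whose determinants multiply to the asserted product, with the factor $\tfrac{1}{16}$ arising from the conversion between complex and real second-derivative conventions.
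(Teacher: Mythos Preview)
Your overall strategy matches the paper's: translate $w$ to the origin, apply a unitary aligning the inner normal with the $y_d$-axis so that $bD$ is locally a graph $y_d=f_w(z',x_d)$, then act by a block unitary $V_w\oplus 1$ with $V_w\in U(d-1)$ to reduce the second-order part of $f_w$ to the stated diagonal form, and finally read off $\nu_D$ by direct computation in these coordinates.

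The gap is in your diagonalization step. You correctly flag that bringing the Hermitian form $H_w$ and the complex symmetric form $S_w$ simultaneously to diagonal form by a single unitary is the crux, and that Autonne--Takagi alone does not give it; but you then assert without argument that the inequality $|v^{\mathrm T}S_w v|<v^*H_w v$ ``provides a strong-$\C$-convexity-adapted simultaneous normal form'' via an unspecified ``spectral construction combining eigenvector analysis of $H_w$ with the Takagi decomposition.'' That is hand-waving at exactly the point where a proof is required. The paper resolves this concretely by splitting the reduction into two stages rather than one: first choose a unitary $U\in U(d-1)$ with $U^*AU$ diagonal \emph{and} $U^{\mathrm T}BU$ real (not yet diagonal); then, since $\tilde B=U^{\mathrm T}BU$ is now real symmetric, diagonalize it by a real orthogonal $R\in O(d-1)$. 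Decoupling ``make the symmetric part real'' from ``make it diagonal'' is the idea your sketch is missing. A smaller point: your curvature justification via the real Hessian of $f_w|_{\{x_d=0\}}$ is not literally the quantity in Definition~\ref{D:curvature}, which is $\det\mathcal Q[\rho]/\|\nabla\rho\|^{2d+2}$; the paper also appeals to a direct computation here, but it is that determinant which must be evaluated.
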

\begin{remark}\label{R:simplification} In the subsequent subsections, the above lemma will be often used to reduce any local computation at a $w\in bD$ to a corresponding computation for $D^w$ at $0$. In this case, for all $z\in bD^w=\left\{(z',x_d+iy_d)\in B_0(\eps_D):y_d=f_w(z',x_d)\right\}$, 
\bes
	L(0,z)=\left<(0,...,0,i),z\right>=iz_d\qquad \text{and}\qquad \hat\eta(z)=\frac{(-\nabla f_w(z',x_d),1)}{\sqrt{1+||\nabla f_w(z',x_d)||^2}}.
\ees
The above quantities are independent of the choice of defining function for $D^w$ (see Lemma \ref{L:deffn}) and, hence, can be computed by taking a particular choice, e.g., $\rho_w(z):=f_w(z',x_d)-y_d$. Also, note that for $z = (z',z_d) \in bD^w$,
\be
\label{e:bdfeqn}
y_d-x_d\partl{f_w}{x_d}(z',x_d) =f_{\balp,\bbet}(z')-ex_d^2	+ O_D(||(z',x_d)||^3)\qquad \text{as}\ ||(z',x_d)||\rightarrow 0,
\ee
where $f_{\balp,\bbet}(z')=\sum_{j=1}^{d-1}\alpha_j|z_j|^2+\sum_{j=1}^{d-1} 		
		\beta_j\rea(z_j^2)$. Due to the strong $\C$-convexity of $D$ --- see \eqref{E:strcvx1}, there is a $c_D>0$ (independent of $w\in bD$) such that
	\be\label{e:hessianbound}
	Q_0^{D^w}(z',z')=f_{\balp,\bbet}(z')\geq c_D||z'||^2,\quad  \text{for all}\ 
		z'\in\C^{d-1}\cong\mathcal H_0bD^w.
	\ee
\end{remark}

In our setting, $|L(z,w)|$ is the distance of $z\in\overline D$ from the affine space $w+\mathcal H_wbD$, $w\in bD$. The following estimates on $|L(z,w)|$ from \cite{LaSt} are useful.
\begin{lemma}\label{L:Lest} Let $D$ be as in Definition~\ref{D:geom}. Then,
	\be\label{E:sandwich}
	||w-z||^2 \lesssim |L(z,w)|\lesssim ||w-z||,
		\quad z\in\overline D, w\in bD.
	\ee
Moreover, there exists a $t_D>0$ such that,
	\be\label{E:normal}
		|L(\zt,w)|+t \approx |L(\zt+t\hat\eta(\zt),w)|,\quad \zt,w\in bD, t<t_D.
	\ee
\end{lemma}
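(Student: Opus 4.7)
The two estimates are handled separately. For \eqref{E:sandwich}, the upper bound $|L(z,w)| \leq \|w-z\|$ is Cauchy--Schwarz applied to the bilinear pairing $\langle \cdot, \cdot\rangle$, and the lower bound $|L(z,w)| \gtrsim \|w-z\|^2$ is a restatement of strong $\C$-linear convexity \eqref{E:strcvx2}, after dividing by $\|\bdy\rho(w)\|$ (which is bounded above on the compact set $bD$).

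For \eqref{E:normal}, my plan is to exploit the identity
\[
L(\zeta + t\hat\eta(\zeta), w) \;=\; L(\zeta, w) - t\alpha(\zeta, w), \qquad \alpha(\zeta, w) := \left\langle \tfrac{\bdy\rho(w)}{\|\bdy\rho(w)\|}, \hat\eta(\zeta) \right\rangle.
\]
Since $|\alpha(\zeta, w)| \leq 1$ (Cauchy--Schwarz), the upper bound $|L(\zeta + t\hat\eta(\zeta), w)| \leq |L(\zeta, w)| + t$ is immediate. For the lower bound, a direct calculation using $\hat\eta(w) = -\overline{\bdy\rho(w)}/\|\bdy\rho(w)\|$ shows $\alpha(w,w) = -1$ for every $w \in bD$, so by uniform continuity of $\alpha$ on the compact set $bD \times bD$ I can find $r_0 > 0$ with $\rea\alpha(\zeta, w) \leq -3/4$ whenever $\|\zeta - w\| < r_0$.

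I would then split on $\|\zeta - w\|$. In the regime $\|\zeta - w\| \geq r_0$, the first bound of the lemma yields $|L(\zeta, w)| \geq c r_0^2 > 0$, so choosing $t_D$ at most half this quantity, the triangle inequality makes $|L(\zeta + t\hat\eta(\zeta), w)|$, $|L(\zeta, w)|$, and $|L(\zeta, w)| + t$ all mutually comparable. In the regime $\|\zeta - w\| < r_0$, I invoke Lemma~\ref{L:Model} to place $w$ at the origin in adapted coordinates where $L(z, 0) = -iz_d$ and $\zeta = (\zeta', \xi_d + i\eta_d)$ with $\eta_d = f_w(\zeta', \xi_d)$. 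Using the explicit form of $\hat\eta(\zeta)$ from Remark~\ref{R:simplification}, the lower bound reduces to
\[
(\eta_d + t/N)^2 + (\xi_d - ta/N)^2 \;\gtrsim\; \xi_d^2 + \eta_d^2 + t^2,
\]
where $a = \smpartl{f_w}{x_d}\big|_\zeta \to 0$ and $N = \sqrt{1 + \|\nabla f_w\|^2}\big|_\zeta \to 1$ as $\zeta \to 0$; this is straightforward when $\eta_d \geq 0$.

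The main obstacle is the case $\eta_d < 0$, where the leading term $\eta_d + t/N$ can become small. The key observation is that positive definiteness of the complex Hessian \eqref{e:hessianbound}, together with completing the square in the expansion $f_w(\zeta', \xi_d) = f_{\balp, \bbet}(\zeta') + \ima\bigl(\sum_j \gam_j \zeta_j \xi_d\bigr) + e\xi_d^2 + O(\|\zeta\|^3)$, forces $\|\zeta'\|^2 \lesssim \xi_d^2$ whenever $\eta_d < 0$, hence $|\eta_d| \lesssim \xi_d^2$. A dichotomy then closes the estimate: if $t \geq 2|\eta_d|$, the first squared term is $\gtrsim t^2$, while $\eta_d^2 \lesssim t^2$ and $|L|^2 \lesssim \xi_d^2 + t^2$; if $t < 2|\eta_d|$, then $t \lesssim \xi_d^2 \ll |\xi_d|$ and the perturbation $ta/N$ is negligible compared to $\xi_d$, so $(\xi_d - ta/N)^2 \approx \xi_d^2$, which already dominates $\xi_d^2 + \eta_d^2 + t^2 \approx \xi_d^2$. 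Uniformity of all constants in $w \in bD$ comes from continuity of the data in Lemma~\ref{L:Model} and compactness of $bD$.
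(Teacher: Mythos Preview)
Your proof is correct. For \eqref{E:sandwich} your argument is identical to the paper's: the upper bound is Cauchy--Schwarz and the lower bound is exactly the strong $\C$-linear convexity inequality \eqref{E:strcvx2}.

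For \eqref{E:normal} the paper simply cites \cite[Lemma 4.3]{LaSt}, whereas you supply a self-contained argument. Your identity $L(\zeta + t\hat\eta(\zeta), w) = L(\zeta, w) - t\alpha(\zeta, w)$ with $\alpha(w,w) = -1$ is correct, and the far/near split on $\|\zeta - w\|$ is the natural way to localize. Your local analysis in the coordinates of Lemma~\ref{L:Model} is sound: the case $\eta_d \geq 0$ is indeed routine (expand the squares and use $|a|$ small, $N$ near $1$); when $\eta_d < 0$ your observation that $f_{\balp,\bbet}(\zeta') \geq c_D\|\zeta'\|^2$ forces $\|\zeta'\| \lesssim |\xi_d|$ and hence $|\eta_d| \lesssim \xi_d^2$ is exactly the right mechanism, and the ensuing dichotomy on $t$ versus $2|\eta_d|$ closes both subcases cleanly. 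Uniformity in $w$ follows as you say from the continuity in Lemma~\ref{L:Model} $(a)$ and the uniform $O_D$ bounds in $(c)$. So you have in fact reproduced (a version of) the argument behind the cited lemma, which makes your treatment more self-contained than the paper's.
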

\begin{proof}  The first estimate \eqref{E:sandwich} is essentially proved at the end of \cite[Section 3.3]{LaSt}. Since it is only stated for $z,w\in bD$ therein, we recall the argument here. The upper bound follows easily since $|L(z,w)|=\left\|\left<\frac{\bdy\rho(w)}{||\bdy\rho(w)||},w-z\right>\right\|\leq ||w-z||$. The lower bound is the strong $\C$-convexity condition; see \eqref{E:strcvx2}.

The second estimate \eqref{E:normal} is exactly \cite[Lemma 4.3]{LaSt}. 
\end{proof}

Lastly, we recall Roccaforte's variation of Weyl's tube formula for nonuniform tubes. The theorem is originally formulated for smooth oriented hypersurfaces in $\rl^d$, but we state it for boundaries of $\cont^2$ domains in $\rl^{2d}$, as these are the relevant hypersurfaces in our case. We note that Roccaforte's proof does not require higher than $\cont^2$-regularity.

\begin{theorem}[{\cite[Theorem 1]{roccaforte2013volume}}] \label{thm:rf} Let $D\subset \rl^{2d}$ be a bounded $\cont^2$ domain. Let $g:bD\rightarrow[0,\infty)$ be a bounded, measurable function. Then, for $\sup_{bD}|g|$ sufficiently small,
	\bes
	\lambda(T_g(bD))= 
		\sum_{j=0}^{d-1}\frac{(-1)^j}{j+1}\int_{bD}g(x)^{j+1}s_j(x)\,d\sigma(x),
	\ees
where $s_j(x)$ is the $j^{\text{th}}$ symmetric polynomial in the principal curvatures (eigenvalues of the shape operator) of $bD$ at $x$. Here, $bD$ is oriented according to the inner unit normal field. 
\end{theorem}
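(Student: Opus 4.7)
The plan is to reduce the volume of $T_g(bD)$ to an integral over $bD$ by pulling back through the inner normal map $\Phi : bD \times [0,\infty) \to \rl^{2d}$, $\Phi(x,t) = x + t\hat\eta(x)$. Since $bD$ is a compact $\cont^2$ hypersurface, it has positive reach $r_0 > 0$; so when $\sup_{bD} g < r_0$, the restriction of $\Phi$ to the hypograph $H_g := \{(x,t) : x \in bD,\ 0 \leq t \leq g(x)\}$ is an injective map onto $T_g(bD)$. Moreover, $\Phi$ is a $\cont^1$-diffeomorphism from $bD \times [0, r_0)$ onto an open one-sided tubular neighborhood of $bD$ that contains $T_g(bD)$, so the classical change of variables formula is available on this region.

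The key geometric input is the explicit Jacobian of $\Phi$. Working in a local orthonormal frame for $bD$ at $x$ aligned with the principal directions of the second fundamental form, the tangential differential of $\Phi$ is $I - t W_x$, where $W_x$ is the Weingarten operator whose eigenvalues are the principal curvatures $\kappa_1(x), \dots, \kappa_{2d-1}(x)$; the derivative in the $t$-direction is $\hat\eta(x)$, which is orthogonal to the tangent space. Consequently,
\[
\bigl|\det D\Phi(x,t)\bigr| \;=\; \prod_{i=1}^{2d-1}\bigl(1 - t\kappa_i(x)\bigr) \;=\; \sum_{j\geq 0}(-t)^j s_j(x),
\]
where the sum terminates at $j = 2d-1$. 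This Jacobian is strictly positive whenever $t < r_0$, so no sign issues arise in the range of integration.

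With these ingredients in place, applying the change of variables formula and Fubini yields
\[
\lambda(T_g(bD)) \;=\; \int_{bD}\!\int_0^{g(x)} \!\bigl|\det D\Phi(x,t)\bigr|\,\md t\,\md\sigma(x) \;=\; \sum_{j\geq 0}\frac{(-1)^j}{j+1}\int_{bD} g(x)^{j+1} s_j(x)\,\md\sigma(x),
\]
matching the claimed identity term by term after integrating the polynomial in $t$ from $0$ to $g(x)$. The main technical point I expect to wrestle with is justifying this change of variables when $g$ is merely bounded and measurable (so $T_g(bD)$ need not be open): one verifies that $H_g$ is $(\sigma \otimes \md t)$-measurable (a standard consequence of the measurability of $g$) and that $\Phi(H_g)$ is Lebesgue-measurable (since $\Phi$ restricted to the closure of $bD \times [0,r_0)$ is a bi-Lipschitz homeomorphism, it preserves the Borel $\sigma$-algebra and Lebesgue-null sets). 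Once this measurability bookkeeping is handled, the smallness condition $\sup_{bD} g < r_0$ reduces the argument to a textbook Jacobian computation, and no further complex-analytic features of the problem enter.
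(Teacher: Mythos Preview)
The paper does not prove this statement itself; it is quoted as a result of Roccaforte and used as a black box. Your argument is the standard derivation of the nonuniform Weyl tube formula via the normal exponential map, and it is correct, including the measurability bookkeeping needed to handle a merely measurable $g$.

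One point worth flagging: your Jacobian computation (correctly) produces the sum $\sum_{j=0}^{2d-1}$, since $bD$ is a $(2d-1)$-dimensional hypersurface in $\rl^{2d}$ and there is no reason for the higher elementary symmetric functions $s_d,\dots,s_{2d-1}$ of the principal curvatures to vanish for a general $\cont^2$ boundary. The upper limit $d-1$ printed in the paper's statement appears to be a typographical slip. In the paper's applications only the $j=0$ term contributes at leading order, so the discrepancy is harmless there, but your version of the formula is the right one.
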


\subsection {Depth estimates} 

In view of Theorem~\ref{thm:rf} above, it is useful to estimate the inner and outer depths of the cuts of an induced polyhedron. This allows us to squeeze the complement of the polyhedron in $D$ between two (inner) tubular neighborhoods of the boundary of $D$, one uniform and one nonuniform. 

\begin{theorem}\label{T:depth}
Let $D$ be as in Definition~\ref{D:geom}, $\de>0$, and $\boldsymbol\de=g\de$, for some positive function $g\in\cont^{\alpha}(bD)$, where $\alpha\in(0,1)$. There exist $C_D,\de_D>0$ such that, if $\zt\in bD$ and $||\boldsymbol\de||_\infty<\de_D$, then
	\bea
		&R_\infty(\boldsymbol\de) \leq C_D||\boldsymbol\de||_{\infty},& \label{E:outer}
\\ 
		&0\leq  r_\infty(\zt;\boldsymbol\de)-g(\zt)\de\leq C_D||g||_{\alpha}^2\de^{1+\frac{\alpha}{2}}. &	\label{E:maxdepth}
	\eea
\end{theorem}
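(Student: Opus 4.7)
\medskip
\noindent\textbf{Proof plan.}
All three estimates will be proved by transferring the computation into the unitary coordinates from Lemma~\ref{L:Model}, centered either at $w$ (for \eqref{E:outer}) or at $\zt$ (for \eqref{E:maxdepth}). This is legitimate since Lemma~\ref{L:deffn} implies that $L$, and hence cuts, caps and depths, are invariant under unitary transformations and independent of the defining function.

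The outer bound \eqref{E:outer} is the easiest. Once $\|\bde\|_\infty$ is small enough that \eqref{E:sandwich} places $\overline{C(w;\bde(w))}$ inside a two-sided tubular neighborhood of $bD$, any $z\in\overline{C(w;\bde(w))}$ can be uniquely written as $\eta+t\hat\eta(\eta)$ with $\eta\in bD$, and \eqref{E:normal} yields $t\lesssim|L(\eta,w)|+t\lesssim|L(z,w)|\leq\bde(w)\leq\|\bde\|_\infty$, proving $R_\infty(\bde)\lesssim\|\bde\|_\infty$. For the lower half of \eqref{E:maxdepth}, I test with $w=\zt$: centering coordinates at $\zt$ gives $\hat\eta(0)=(0,\ldots,0,i)$ (Remark~\ref{R:simplification}), and a short direct computation using $\rho=\rho_\zt$ produces the exact identity $|L(\zt+t\hat\eta(\zt),\zt)|=t$ for $t<t_D$, so $r_\zt(\zt;g(\zt)\de)=g(\zt)\de$ and hence $r_\infty(\zt;\bde)\geq g(\zt)\de$.

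The real content lies in the upper half of \eqref{E:maxdepth}. Only sources $w$ with $\zt\in S(w;g(w)\de)$ contribute, and for these \eqref{E:sandwich} forces $\|w-\zt\|\lesssim\sqrt{\|g\|_\infty\de}$. In the Lemma~\ref{L:Model} coordinates centered at $\zt$, I will write $L(\zt+t\hat\eta(\zt),w)=L(0,w)+t-it\bigl(\hat{A}_d(w)-i\bigr)$, where $\hat{A}_d$ denotes the normalized $d$-th component of $\bdy\rho$, and establish the sign-sensitive expansion
\bes
\rea L(\zt+t\hat\eta(\zt),w)=f_\zt(w',w_d^x)+t+O\bigl(t\|w\|^2+\|w\|^3\bigr),\quad \ima L(\zt+t\hat\eta(\zt),w)=w_d^x+O\bigl(\|w\|t+\|w\|^2\bigr),
\ees
where $f_\zt$ is the model function from Lemma~\ref{L:Model}. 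Squaring and imposing $|L|\leq g(w)\de$ then gives $t\leq g(w)\de-f_\zt(w',w_d^x)+(\text{error})$, and three ingredients close the argument: (i) the strong $\C$-convexity bound \eqref{e:hessianbound} together with Young's inequality applied to the mixed terms of $f_\zt$ yields $f_\zt(w',w_d^x)\geq -C_D(w_d^x)^2+O(\|w\|^3)$, while the cap constraint and the imaginary part of the expansion force $|w_d^x|\lesssim g(w)\de$, so $-f_\zt\lesssim\|g\|_\infty^2\de^2\leq\|g\|_\alpha^2\de^{1+\alpha/2}$ for $\de\leq 1$; (ii) the $\alpha$-H\"older continuity of $g$ gives $(g(w)-g(\zt))\de\lesssim\|g\|_\alpha\|g\|_\infty^{\alpha/2}\de^{1+\alpha/2}\leq\|g\|_\alpha^2\de^{1+\alpha/2}$; (iii) the residual errors from the expansion are harmless once $\|w\|\lesssim\sqrt{\|g\|_\infty\de}$ and $t\lesssim\|g\|_\infty\de$ are inserted.

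The principal obstacle is the sign in the Taylor step above. The naive leading order $\hat{A}(w)\approx(0,\ldots,0,i)$ delivers $\rea L(0,w)\approx-f_\zt$, the wrong sign, which would yield only the bound $r_w\leq g(w)\de+f_\zt$, exceeding the target $g(\zt)\de$ by $O(g(\zt)\de)$ rather than by the claimed $O(\|g\|_\alpha^2\de^{1+\alpha/2})$. One must carefully retain the tangential contributions $\hat{A}_j(w)w_j$ for $j<d$: although each is of second order in $\|w\|$ and is invisible to the naive approximation, they collectively contribute $+2f_\zt(w',0)$ to $\rea L(0,w)$, which together with the $-f_\zt$ coming from $\hat{A}_d(w)w_d$ produces the correct $+f_\zt$. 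With this identity in hand, every remaining step is a routine perturbation controlled by the H\"older norm of $g$ and the ambient geometry of $D$.
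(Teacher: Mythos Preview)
Your argument is correct, but it is organised differently from the paper's. For \eqref{E:outer} and the lower half of \eqref{E:maxdepth} you do exactly what the paper does. For the upper half of \eqref{E:maxdepth} you centre the unitary coordinates at $\zt$ and expand $L(\zt+t\hat\eta(\zt),w)$ directly, exploiting the Euler-type identity
\[
2\sum_{j<d}\rea\big(\partial_{z_j}f_\zt(w)\,w_j\big)+\partial_{x_d}f_\zt(w)\,w_d^x
=\nabla f_\zt(w',w_d^x)\cdot(w',w_d^x)
=2f_\zt(w',w_d^x)+O(\|w\|^3),
\]
which is precisely the ``sign-fixing'' observation you highlight: the tangential pieces of $\hat A(w)$ convert the naive $-f_\zt$ into $+f_\zt$. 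Combined with $|w_d^x|\lesssim\|g\|_\infty\de$ (from the imaginary part), the a priori bound $t\lesssim\|g\|_\infty\de$ (from \eqref{E:outer}), and the H\"older step, this yields the claim.

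The paper proceeds instead by centring at $w$ and first proving the sharper pointwise depth estimate (Lemma~\ref{L:depth})
\[
\de-|L(z,w)|\le r_w(z;\de)\le \sqrt{\de^2-|L(z,w)|^2}+C_D\de^2,
\]
obtained by solving the quadratic $|z_d+r\hat\eta_d(z)|^2=\de^2$ exactly. Theorem~\ref{T:depth} then follows by specialising to $\de=g(w)\de$, using only the trivial consequence $r_w\le g(w)\de+C_D\de^2$ and the H\"older bound on $g(w)-g(\zt)$. The gain of the paper's route is that the quadratic formula \eqref{E:indepth} and the two-sided estimate \eqref{E:depthest} are reusable: they are invoked again in the proof of \eqref{eq:cut}--\eqref{eq:approxcut} (via \eqref{E:depth2}) and in Proposition~\ref{P:NN2} (via \eqref{E:depthest2}). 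Your more direct expansion gives the present theorem but would not by itself supply those sharper depth formulas, so you would need a separate computation later.
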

To prove the above theorem, we need estimates on $r_w(\zt;\de)$. We already have that $r_w(\zt;\de)\lesssim \de-|L(\zt,w)|$ for $\zt\in S(w;\de)$ from \eqref{E:normal}. This does not suffice for our purposes, and we improve this estimate as follows.

\begin{lemma}\label{L:depth} Let $D$ be as in Definition~\ref{D:geom}.
There exist $C_D,\de_D>0$ such that  for any $w\in bD$, $\de<\de_D$, and $z\in S(w;\de)$, 
\be\label{E:depthest}
		\de-|L(z,w)|\leq r_w(z;\de)
			\leq \sqrt{\de^	2-|L(z,w)|^2}+C_D\de^2.
\ee
\end{lemma}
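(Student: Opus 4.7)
The plan is to analyze the quadratic function $q(t) := |L(z + t\hat\eta(z), w)|^2$ in $t \geq 0$. Since $L(\cdot, w)$ is complex-affine, $L(z + t\hat\eta(z), w) = L(z, w) - tA$ with $A := \left<\frac{\bdy\rho(w)}{||\bdy\rho(w)||}, \hat\eta(z)\right>$, so $q(t) = |L(z, w)|^2 - 2t\mu + t^2|A|^2$ where $\mu := \rea(\overline{L(z, w)} A)$. Cauchy--Schwarz applied to the two Hermitian unit vectors gives $|A| \leq 1$. Since $bD$ is compact and $\cont^2$, it has positive reach, so for $\de_D$ small enough the segment $\{z + t\hat\eta(z) : t \in [0, \de]\}$ lies in $\overline D$ whenever $\de < \de_D$. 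Consequently $r_w(z;\de)$ is precisely the smallest positive root of $q(t) = \de^2$, namely
\bes
r_w(z;\de) \;=\; \frac{\mu + \sqrt{\mu^2 + |A|^2(\de^2 - |L(z, w)|^2)}}{|A|^2}.
\ees

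The lower bound is immediate from the triangle inequality and $|A| \leq 1$: $|L(z+t\hat\eta(z),w)| \leq |L(z,w)| + t$, so $t = \de - |L(z,w)|$ keeps us in $\overline{C(w;\de)}$. For the upper bound, we reduce to the normal form at $w = 0$ via Lemmas~\ref{L:deffn} and \ref{L:Model}. The resulting explicit expressions for $|A|^2$ and $\mu$, combined with the a priori bound $||z||^2 = O_D(\de)$ for $z \in S(0;\de)$ (which follows from strong $\C$-convexity via \eqref{e:hessianbound}), give $|A|^{-2} = 1 + O_D(\de)$, and hence
\bes
\sqrt{(\de^2 - |L(z,w)|^2)/|A|^2} \;\leq\; \sqrt{\de^2 - |L(z,w)|^2} + O_D(\de^2).
\ees
Using $\sqrt{\mu^2 + |A|^2(\de^2 - |L|^2)} \leq |\mu| + |A|\sqrt{\de^2 - |L|^2}$, the $\mu$-contribution to $r_w(z;\de)$ is harmless when $\mu \leq 0$. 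It thus remains to establish $\mu = O_D(\de^2)$ in the case $\mu > 0$.

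The main obstacle is obtaining this sharp $O_D(\de^2)$ estimate on $\mu$ in the case $\mu>0$. In the normal form $\mu$ equals $-f_{\balp,\bbet}(z') + ex_d^2 + O_D(||z||^3)$ up to the positive factor $1/\sqrt{1+B+C}$, so $\mu > 0$ forces $f_{\balp, \bbet}(z') \leq ex_d^2 + O_D(||z||^3)$. Coupled with the strong-convexity lower bound $f_{\balp, \bbet}(z') \geq c_D ||z'||^2$ from \eqref{e:hessianbound}, this yields the self-improving inequality $||z'||^2 \lesssim \de^2 + ||z||^3$. A short bootstrap ruling out $||z'||^2 \gg \de^2$ for small $\de$ (by comparing $||z'||^2$ to $||z'||^3$) then sharpens the a priori $O_D(\sqrt\de)$ bound on $||z'||$ to the much stronger $||z'|| = O_D(\de)$. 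Consequently $||z||^3 = O_D(\de^3)$, and a direct bound gives $|\mu| \lesssim ||z'||^2 + x_d^2 + ||z||^3 = O_D(\de^2)$, as required. This interplay between the quadratic and cubic contributions to $f_w$ is the crux of the upper bound: without this bootstrap-sharpening of $||z'||$, only the weaker $O_D(\de^{3/2})$ correction is visible.
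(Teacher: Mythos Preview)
Your proof is correct and follows essentially the same route as the paper's: both solve the quadratic $|L(z+t\hat\eta(z),w)|^2=\de^2$ after reducing to the normal form at $w$ (Lemmas~\ref{L:deffn}--\ref{L:Model}), obtain the lower bound from the triangle inequality, and for the upper bound isolate the ``dangerous'' term $\mu=\rea(\overline{L}A)$ (which in the normal form is $(x_d\partial_{x_d}f-y_d)/n(z)=-f_{\balp,\bbet}(z')+ex_d^2+O(\|z\|^3)$ up to a bounded factor) and show that $\mu>0$ forces $\|z'\|=O_D(\de)$, hence $\mu=O_D(\de^2)$. Your bootstrap handling of the cubic remainder is a slightly more explicit version of what the paper does in the paragraph around \eqref{E:S'}. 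Two small remarks: the a~priori bound $\|z\|^2=O_D(\de)$ on $S(w;\de)$ is really \eqref{E:sandwich} rather than \eqref{e:hessianbound}; and your claim that the normal segment stays in $\overline D$ for $t\in[0,\de]$ should strictly be for $t\in[0,t_+]$ with $t_+=O_D(\de)$, but this is immediate once $\de_D$ is chosen below a fixed multiple of the reach.
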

\begin{proof}
By Lemmas~\ref{L:deffn} and \ref{L:Model}, we may assume that $w=0$ and $D=D^w$. We denote $f_w$ by $f$, $\tfrac{\bdy f}{\bdy x_d}(z',x_d)$ by $\bdy_{x_d}f(z)$, and the  $d$-th component of $\hat\eta(z)$ by $\hat\eta_d(z)$. We will invoke the properties of $D^w$ and $f_w$ collected in Remark~\ref{R:simplification}.

Owing to the upper bound in \eqref{E:sandwich}, we may choose $\de_D$ small enough so that if $\de<\de_D$ and $z\in S(0;\de)$, then $z+ r\hat\eta(z)\notin S(0;\delta)$ for any $r>0$. Thus, $z+ r\hat\eta(z)\in bC(0;\de)$ if and only if $|L(z+ r\hat\eta(z),0)|=\de$. Therefore, we must have that
\beas
	r_0(z;\de)&=&	\min\left\{r\geq 0:|z_d+r\hat\eta_d(z)|=\de\right\}. 
\eeas
Thus, $\de=|z_d+r_0(z;\de)\hat\eta_d(z)|\leq |z_d|+r_0(z;\de)||\hat\eta(z)||=|L(z,0)|+r_0(z;\de)$. This gives the lower bound in \eqref{E:depthest}. 

For the upper bound, we solve for $r_0(z;\de)$ in the quadratic equation $|z_d+r_0(z;\de)\hat\eta_d(z)|^2=\de^2$ to obtain that
\bea
		r_0(z;\de)&=&\frac{
\sqrt{\rea(\overline{z_d}\hat\eta_d(z))^2+|\hat\eta_d(z)|^2(\de^2-|z_d|^2)}
	-\rea(\overline{z_d}\hat\eta_d(z))}{|\hat\eta_d(z)|^2} 
\label{E:indepth}\\
		&\leq &|\hat\eta_d(z)|^{-1}\sqrt{(\de^2-|z_d|^2)}+
		|\hat\eta_d(z)|^{-2}\left(|\rea(\overline{z_d}\hat\eta_d(z))|
	-\rea(\overline{z_d}\hat\eta_d(z))\right)
\notag\\
&\leq &|\hat\eta_d(z)|^{-1}\sqrt{(\de^2-|z_d|^2)}+2|\hat\eta_d(z)|^{-2}\max\{0,(x_d\bdy_{x_d}f(z',x_d)-y_d)\}.
\label{e:depth}
	\eea
Now, let $S'(\de):=\{z\in S(0;\de): x_d\bdy_{x_d}f(z',x_d)-y_d> 0\}$. From \eqref{e:bdfeqn} and \eqref{e:hessianbound}, we have that 
	\bea
		x_d\bdy_{x_d}f(z',x_d)-y_d&=&-f_{\balp,\bbet}(z')+ex_d^2
			+h(z',x_d)\notag\\
		&\leq& -c_D||z'||^2+ex_d^2+h(z',x_d),\label{E:xdf-yd}
	\eea
where $h(z',x_d)=O_D(||(z',x_d)||^3)$ as $||(z',x_d)||\rightarrow 0$.  Thus, \eqref{E:xdf-yd} yields that if $e\leq 0$ and $\de>0$ is sufficiently small, then $x_d\bdy_{x_d}f(z',x_d)-y_d\leq 0$ on $S(0;\de)$, i.e., $S'(\de)$ is empty. On the other hand, if $e>0$, then by \eqref{E:xdf-yd}, $||z'||\leq C_D(|x_d|)$ on $S'(\de)$ for some $C_D>0$ and sufficiently small $\de>0$. But $\sup\{||z'||^2,|x_d|:z\in S(0;\de)\}=O_D(\de)$ as $\de\rightarrow 0$.  Thus, 
	\be\label{E:S'}
		\sup\{||z'||,|x_d|:z\in S'(\de)\}
			=O_D(\de)\qquad \text{as}\ \delta\rightarrow 0,
	\ee
and $\sup\{\max\{0,x_d\bdy_{x_d}f(z',x_d)-y_d\}:z\in S(0;\de)\}= O_D({\de^{2}})$ as $\de\rightarrow 0$. Furthermore,
	\bes
		|\hat\eta_d(z)|^{-1}=\sqrt{\frac{1+||\nabla f(z',x_d)||^2}{1+\bdy_{x_d}f(z',x_d)^2}}=1+O(||z',x_d||^2)\ \text{as}\ ||(z',x_d)||\rightarrow 0.
	\ees
Thus, $\sup\{|\hat\eta_d(z)|^{-1}:z\in S(0;\de)\}=1+O_D(\de)$ as $\de\rightarrow 0$. Substituting these estimates into \eqref{e:depth} gives the desired upper bound, and completes the proof. 
\end{proof}

\noindent{\em Proof of Theorem~\ref{T:depth}.} We use Lemmas~\ref{L:Lest} and \ref{L:depth} to prove \eqref{E:outer} and \eqref{E:maxdepth}, respectively. First, let $t_D$ be as in \eqref{E:normal}.  By \eqref{E:sandwich}, we may choose $\de_D>0$ sufficiently small so that for any fixed $w\in bD$ and $\de<g(w)^{-1}\de_D$, every $z\in C(w;g(w)\de)$ admits a unique projection onto $bD$, i.e., there is a unique $\zt=\zt_z\in bD$ and $t=t_z>0$ such that $z=\zt+t\hat\eta(\zt)$. Moreover, owing to the continuous dependence of $t_z$ on $||z-w||$, we may also assume that $t<t_D$ for all $z\in C(w;g(w)\de)$. So, we have that $R_w(\bde(w)) \leq t_D$. Then, by \eqref{E:normal} 
	\bes
		t\leq C_D|L(\zt+t\hat\eta(\zt),w)|=C_D|L(z,w)|\leq C_Dg(w)\de.
	\ees
Thus, $t_D \leq C_Dg(w)\delta$, which implies that $R_w(\bde(w))\leq C_Dg(w)\de$. Taking supremum in $w\in bD$, we obtain \eqref{E:outer}. 

Next, for \eqref{E:maxdepth}, observe that since 
	\beas
		r_w(\zt;\bde(w))=	
			\begin{cases}
				-\infty,\ & \text{when}\ \zt\notin S(w;\bde(w)),\\ 
				g(w)\de,\ & \text{when}\ \zt=w,
			\end{cases}
	\eeas
we have that $g(w)\de\leq r_\infty(\zt,\bde)=\sup\{r_w(\zt;\bde(w)):w \in bD,  \ \text{such that} \  \zt\in S(w;\bde(w))\}.$  
By Lemma~\ref{L:depth}, there exist $C_D,\de_D>0$ such that, for $||\bde||_\infty<\de_D$,
	\beas
		r_w(\zt;\bde(w))&\leq& g(w)\de+C_D||g||_\infty\de^2\\
&=&g(\zt)\de+(g(w)-g(\zt))\de+C_D||g||_\infty\de^2\\
	&\leq& g(\zt)\de+||g||_\alpha||\zt-w||^\alpha \de+C_D||g||_\infty\de^2.
\eeas
Thus, shrinking $\de_D$ if necessary, we have from Lemma~\ref{L:Lest} that 
\beas
r_w(\zt;\bde(w))-g(\zt)\de\leq ||g||_\alpha||\zt-w||^\alpha \de+C_D||g||_\infty\de^2
\leq ||g||_\alpha C_D|L(z,w)|^{\frac{\alpha}{2}}\de
\leq C_D||g||_\alpha^2 \de^{1+\frac{\alpha}{2}},
	\eeas
for $||\bde||_\infty<\de_D$. Taking supremum over $w$, we obtain \eqref{E:maxdepth}.
\qed

\begin{cor}\label{C:outertube}
Let $D$ be as in Definition~\ref{D:geom} and $\de>0$. There exist $C_D,\de_D>0$ such that for any $w\in bD$ and $\de<\de_D$,
	\bes
		C(w;\de)\subset T_{C_D\de}(S(w;C_D\de)).	
\label{E:outertube}
	\ees
\end{cor}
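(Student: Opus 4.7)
\textbf{Proof plan for Corollary \ref{C:outertube}.} The plan is to chain together the outer-depth bound from Theorem~\ref{T:depth} with the normal-direction estimate \eqref{E:normal} from Lemma~\ref{L:Lest}, since together they already say ``a cut is a short tube over a cap of comparable size.'' Concretely, fix $w \in bD$ and take $\bde := \de$ to be constant (i.e. $g \equiv 1$) in Theorem~\ref{T:depth}. Then \eqref{E:outer} gives constants $C'_D, \de'_D > 0$ such that, for all $\de < \de'_D$,
\begin{equation*}
  \overline{C(w;\de)} \;\subset\; T_{C'_D \de}(bD).
\end{equation*}

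Next, I would shrink $\de_D$ further (relying on the standard tubular-neighborhood property of $bD$ for a bounded $\cont^2$ domain) so that every point $z$ in the tube $T_{C'_D \de}(bD)$ has a unique nearest boundary point; this is the same reduction used in the opening paragraph of the proof of Theorem~\ref{T:depth}. Thus any $z \in C(w;\de)$ can be written uniquely as $z = \zt + t\hat\eta(\zt)$ with $\zt \in bD$ and $0 \leq t \leq C'_D \de$. By further shrinking $\de_D$ I can also assume $t < t_D$, where $t_D$ is the threshold of \eqref{E:normal}.

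Now apply \eqref{E:normal}: there is a constant $c_D > 0$ (depending only on $D$) with
\begin{equation*}
|L(\zt,w)| + t \;\leq\; c_D\, |L(\zt + t\hat\eta(\zt),w)| \;=\; c_D\, |L(z,w)| \;<\; c_D \de.
\end{equation*}
In particular $|L(\zt,w)| \leq c_D \de$, so $\zt \in S(w; c_D \de)$. Setting $C_D := \max\{C'_D, c_D\}$, we conclude $z = \zt + t\hat\eta(\zt) \in T_{C_D \de}(S(w; C_D \de))$, which is the desired inclusion.

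The argument is essentially routine given the results already established, so I do not anticipate a genuine obstacle; the only mildly delicate point is coordinating the choice of $\de_D$ so that (i) the outer-depth bound \eqref{E:outer} applies, (ii) normal projection onto $bD$ is well-defined and single-valued on $\overline{C(w;\de)}$, and (iii) the normal distances $t$ that arise lie below the threshold $t_D$ on which \eqref{E:normal} is available. Since each of these three smallness conditions is satisfied by taking $\de$ sufficiently small depending only on $D$, a single $\de_D > 0$ works uniformly in $w \in bD$, which is exactly the uniformity claimed.
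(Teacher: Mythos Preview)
Your proposal is correct and follows essentially the same route as the paper's proof: invoke the outer-depth bound \eqref{E:outer} from Theorem~\ref{T:depth} (with $\bde\equiv\de$) to place $C(w;\de)$ inside a uniform tube, project each point uniquely onto $bD$, and then use \eqref{E:normal} to conclude that the foot of the projection lies in a cap of radius $\lesssim\de$. Your write-up is in fact slightly more explicit about coordinating the smallness thresholds than the paper's, but there is no substantive difference in the argument.
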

\begin{proof} Let $C_D,\de_D$ be as in Theorem~\ref{T:depth} (for $\bde\equiv \de$) and $t_D$ be as in Lemma~\ref{L:Lest}. Shrinking  $\de_D>0$ further, if necessary, we may assume that $C_D\de_D<t_D$, and each $z\in C(w;\de)$ has a unique projection $\zt$ onto $bD$ whenever $\de<\de_D$. Thus, $z=\zt+t\hat\eta(\zt)$ for some $t<R_\infty(\bde)\leq C_D\de<t_D$. Now, owing to \eqref{E:normal}, $|L(z,w)|\lesssim \de$, $z,w\in bD$,  and the proof is complete.
\end{proof}

\subsection{Area and volume estimates} The depth estimates obtained in the previous subsection give bounds on the volumes of the unions of caps and tubular caps of an induced polyhedron. We now obtain more precise area and volume estimates for individual caps, cuts, and visibility regions of a strongly $\C$-convex domain.  Such estimates for model quadratic domains have been obtained in Section \ref{SS:Models}. We now extend these estimates to general strongly $\C$-convex domains via second-order approximation (see Remark~\ref{R:simplification}). 

\begin{theorem} \label{T:CutsCapsGeneral}  Let $D$ be as in Definition~\ref{D:geom}. Let $\nu_D$ be the complex-restricted curvature of $bD$, $\kappa_d$ be the Lebesgue volume of the unit ball in $\rl^d$, and $h_d=2\int_0^{\pi/2}\cos^d\!\theta\:\md\theta$. Then there exist $C_D,\de_D>0$ such that  for $w\in bD$, $\de<\de_D$, and $\bde\equiv\de$, 
	\bea
		\left|\sigma\big(S(w;\de)\big)-h_dk_d(w)\,\de^{d}\right|
		\leq  C_D\de^{d+\frac{1}{2}},
\label{eq:cap}
	\eea
	\bea
		\left|\lambda\big(C(w;\de)\big)-\frac{h_{d+1}k_d(w)}{d}\,\de^{d+1}\right|
	\leq C_D\de^{d+\frac{5}{4}},
\label{eq:cut}
	\eea
	\bea
\label {eq:approxcut}
		0\leq \lambda\big(C(w;\de)\big)-\lambda\big(TC(w;\de)\big)\leq C_D\de^{2d+1},
	\eea
	\bea
	  \left|\sigma\big(G_{t\de}(w;\bde)\big)- 
		 h_d(t)k_d(w)\,\de^d \right|\leq C_D\de^{d+\frac{1}{2}},\ t\in \left[0,1-C_D\sqrt{\de}\right],
\label{eq:vis} 
	\eea
where $k_d(w)=4\kappa_{2d-2}\nu_{D}(w)^{-1/2}$ and $h_d(t)=2\int_0^{\sin^{-1}\sqrt{1-t^2}}(\cos\theta-t)^{d-1}\cos\theta\, d\theta$, $t\in[0,1]$.
\end{theorem}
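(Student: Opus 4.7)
My strategy for all four estimates is a common reduction-and-scaling argument. For each $w \in bD$, I would use Lemma~\ref{L:deffn} to remove any dependence on the defining function and then Lemma~\ref{L:Model} to translate to the local graph domain $D^w = \{y_d > f_w(z', x_d)\}$ at the origin, taking $\rho_w = f_w - y_d$. By Remark~\ref{R:simplification} this yields the clean identity $L(0, z) = i z_d$ on $bD^w$, while $f_w$ agrees with the explicit quadratic $f_{\balp,\bbet}(z') + \ima\sum_{j=1}^{d-1}\gam_j z_j x_d + e x_d^2$ up to a uniform cubic remainder $O_D(\|(z', x_d)\|^3)$. The corresponding computations for the model quadratic domains are available in Section~\ref{SS:Models}; the present work is to carry them over to general $D$ with errors uniform in $w$.

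The unifying device is the anisotropic scaling $(z', x_d) = (\sqrt{\de}\,\xi', \de\,t)$, dictated by Lemma~\ref{L:Lest} and the hessian bound \eqref{e:hessianbound}, which together confine the relevant regions to $\|z'\| = O(\sqrt{\de})$ and $|x_d| = O(\de)$. In these rescaled variables the limit region is $\de$-independent, since $f_{\balp,\bbet}$ is homogeneous of degree two and the cubic remainder contributes only $O(\de^{3/2})$. For \eqref{eq:cap}, I parametrize $S(0;\de)$ by $\wt S(0;\de) = \{x_d^2 + f_w(z', x_d)^2 \leq \de^2\}$ and use $\sigma(S(0;\de)) = \int_{\wt S(0;\de)}\sqrt{1 + \|\nabla f_w\|^2}\,d\lambda_{2d-1}$; the Jacobian contributes $\de^d$, the integrand tends to $1$, and the limit integral is $k_d(w)\int_{-1}^{1}(1-t^2)^{(d-1)/2}\,dt = h_d k_d(w)$, upon diagonalizing $f_{\balp,\bbet}$ to identify $\{f_{\balp,\bbet} \leq a\}$ as an ellipsoid of volume $k_d(w)\,a^{d-1}$ (this uses $\nu_D(w) = \tfrac{1}{16}\prod_{j=1}^{d-1}(\alpha_j^2-\beta_j^2)$). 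For \eqref{eq:cut}, the identity $\lambda(C(0;\de)) = \int_{\wt S(0;\de)}(\sqrt{\de^2 - x_d^2} - f_w)_+\,d\lambda_{2d-1}$ scales to a Jacobian of $\de^{d+1}$, and an integration by parts in the $\xi'$-variable gives the constant $h_{d+1}k_d(w)/d$.

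The visibility estimate \eqref{eq:vis} follows from the same reduction, but with the base point $\zt$ placed at the origin and the source $w$ varying over $bD$: expanding $|L(it', w)|^2$ using the quadratic model reduces the pointwise condition $|L(it', w)| \leq \de$ for all $t' \in [0, t\de]$ (with the endpoint $t' = t\de$ saturating the constraint) to an ellipsoidal inequality of the form $(f_{\balp,\bbet}(w'') + t\de)^2 + (2A(w'') + u)^2 \leq \de^2$, where $A$ is an explicit quadratic form in $w''$ built from the $\bbet$-coefficients. After the same scaling, integration in the $u$-direction followed by the substitution $\cos\theta = f_{\balp,\bbet}(\xi'') + t$ and one integration by parts produces the expression $h_d(t)\,k_d(w)$. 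The estimate \eqref{eq:approxcut} is a purely geometric refinement: points in $C(w;\de)\setminus TC(w;\de)$ are precisely those whose inner-normal projection onto $bD$ lies outside $S(w;\de)$, and a careful accounting via \eqref{E:normal} and \eqref{E:outer} confines these to a thin corner region near the lateral boundary of the cut, of volume $O(\de^{2d+1})$, i.e., $\de^d$ smaller than the cut itself.

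The main obstacle is not any individual computation but the uniform-in-$w$ organization of the error terms. Uniformity comes from the continuous dependence of $\balp, \bbet, \gam, e$ on $w$ in Lemma~\ref{L:Model}(a) together with the compactness of $bD$. For \eqref{eq:cap} and \eqref{eq:vis} the cubic perturbation of $f_w$ in the scaled coordinates is of relative size $O(\sqrt{\de})$, directly giving the claimed $O(\de^{d+1/2})$; for \eqref{eq:cut} the boundary of the integration region itself shifts by $O(\sqrt{\de})$, which with some care leads to an $O(\de^{d+3/2})$ error and comfortably accommodates the stated weaker bound $O(\de^{d+5/4})$. The H\"older hypothesis on $g$ plays no role in this theorem since $\bde \equiv \de$ throughout.
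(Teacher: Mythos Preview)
Your treatment of \eqref{eq:cap}, \eqref{eq:cut}, and \eqref{eq:vis} is essentially correct and follows the same reduction-to-model-domain philosophy as the paper, with your anisotropic scaling $(z',x_d)=(\sqrt\de\,\xi',\de\,t)$ being a clean reformulation of the paper's explicit nesting inclusions $\wt S^{D_{\balp,\bbet}}(0;\de\pm a_D\de^{3/2})$. For \eqref{eq:cut} your direct route --- integrating the $y_d$-fiber length over the projection --- is actually simpler than the paper's, which instead computes $\lambda(TC)$ via Roccaforte's formula and the explicit depth expansion \eqref{E:depth2}, and only then passes to $\lambda(C)$ via \eqref{eq:approxcut}; the $\de^{d+5/4}$ in the paper is an artifact of the crude bound $\sqrt{a+b}\leq\sqrt{|a|}+\sqrt{|b|}$ applied inside that expansion, and your direct approach does indeed give $O(\de^{d+3/2})$. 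One small imprecision: your formula $\lambda(C(0;\de))=\int_{\wt S(0;\de)}(\sqrt{\de^2-x_d^2}-f_w)_+\,d\lambda_{2d-1}$ is exact only where $f_w\geq 0$; when $f_w<0$ the projection of $C$ overspills $\wt S$ and the lower $y_d$-limit becomes $-\sqrt{\de^2-x_d^2}$. But the set $\{f_w<0\}$ forces $\|z'\|\lesssim|x_d|=O(\de)$ by \eqref{e:hessianbound}, so the discrepancy is $O(\de^{2d})$ and harmless.

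There is, however, a genuine gap in your sketch of \eqref{eq:approxcut}. The tools you cite, \eqref{E:normal} and \eqref{E:outer}, together only yield Corollary~\ref{C:outertube}: points of $C(0;\de)\setminus TC(0;\de)$ project onto $S(0;C_D\de)\setminus S(0;\de)$, a set of surface measure $O(\de^d)$, with normal depth $O(\de)$. That gives $O(\de^{d+1})$, which misses the target by a factor $\de^d$. The paper's argument extracts the missing factor by a sign analysis you have not mentioned: if $z=(z',x_d+if_w)\in bD$ lies outside $S(0;\de)$ yet $z+t\hat\eta(z)\in C(0;\de)$ for some $t>0$, the quadratic \eqref{E:quadratic} forces $x_d\,\partial_{x_d}f_w-y_d>0$. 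By \eqref{e:bdfeqn} this reads $f_{\balp,\bbet}(z')<ex_d^2+O(\|(z',x_d)\|^3)$, and since $f_{\balp,\bbet}(z')\geq c_D\|z'\|^2$, one obtains $\|z'\|=O(|x_d|)=O(\de)$ --- a full order better than the generic $\|z'\|=O(\sqrt\de)$. The base therefore has measure $O(\de^{2d-1})$; moreover the same quadratic shows $t_{\max}(z)\lesssim x_d\partial_{x_d}f_w-y_d=O(\de^2)$. Together these produce $O(\de^{2d+1})$. Without this sign-driven localization, your bound on $C\setminus TC$ does not close.
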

\begin{remark}\label{R:G=S}
While the surface areas of $S(w;\de)$ and $G_0(w;\bde)$, $\bde\equiv\de$, coincide up to first order in $\de$ (since $h_d(0)=h_d$), they are not the same sets unless the quasimetric $\mathtt d$ defined in Section~\ref{SS:quasimetric} is a metric.  	
\end{remark}
\begin{proof} First, we introduce some notation. The map $\pi_{2d-1}:\Cd\rightarrow \C^{d-1}\times\rl$ denotes the projection $(z_1,...,z_d)\mapsto (z',x_d)=(z_1,...,z_{d-1},x_d)$. 
The set $\pi_{2d-1}(C)$ is denoted by $\wt C$ for any $C\subset\Cd$.  
Whenever needed, the dependence of cuts, tubular cuts, caps and visibility regions on the underlying domain is be indicated via a superscript, as in $C^D(w;\de)$, $S^D(z;\de)$, etc. 

Let $\eps_D>0$ be as in Lemma~\ref{L:Model}. 
Let $\de^*_D>0$ be such that $B_w(\de^*_D)\subseteq U_w^{-1}(B_0(\eps_D))$ for all $w\in bD$, where $U_w$ is the unitary transformation granted by Lemma~\ref{L:Model}. Owing to the estimates \eqref{E:sandwich}, \eqref{E:normal} and \eqref{E:maxdepth}, there is a $\delta_D>0$ such that for all $\de<\de_D$, $t>0$, 
	\bes
		C(w;\de), S(w;\de), G_{t\de}(w;\de)\subset B_w(\de^*_D).
	\ees
In view of this, we assume that $w=0$ and $D=D^w=\{z\in B_0(\eps_D):y_d>f_w(z',x_d)\}$. 

We denote $f_w$ by $f$, and refer the reader to Remark~\ref{R:simplification} for some useful observations. It is also useful to note that, as $||(z',x_d)||\rightarrow 0$ and $\de\rightarrow 0$, 
	\begin{itemize}
	\item [$(a)$] $f(z',x_d)=f_{\balp,\bbet}(z')+O_D(||x_dz'||+|x_d|^2+||(z',x_d)||^3)$, where $f_{\balp,\bbet}$ is as in Remark~\ref{R:simplification},
	\item [$(b)$] $||\nabla f(z',x_d)||, ||\nabla f(z',x_d)-\nabla f_{\balp,\bbet}(z',x_d)||,  |\bdy_{x_d}f(z',x_d)|=O_D(||(z',x_d)||)$, 
	\item [$(c)$]  $\dfrac{\md (\pi_{2d-1})_*\sigma}{\md\lambda_{2d-1}}(z',x_d)=\left(1+||\nabla f(z',x_d)||^2\right)^{\frac{1}{2}}=1+O_D(||(z',x_d)||^2)$,
	\item [$(d)$] 
 $\sup_{z\in K_\de}\{||z'||^2,|x_d|\}=O_D(\de)$, where $K_\de$ is either $C(0;\de)$, $S(0;\de)$, or $G_{t\de}(0;\de)$, $t>0$.
	\end{itemize}
We now note a common theme in this proof. The quantities $\sigma\left(S(0;\de)\right)$, $\lambda\left(TC(0;\de)\right)$, and $\sigma\left(G_{t\de}(0;\de)\right)$ can be expressed as certain integrals (with respect to $\sigma$) on suitable subsets of $bD$. If these subsets are small, then due to $(b)$ and $(c)$ above, these integrals can be expressed as integrals (with respect to $\lambda_{2d-1}$) over the projections of these subsets onto $\{y_d=0\}$ without a significant loss, making them easier to tackle.

The estimates above suggest that it is useful to compute the volumes and areas of $\de$-cuts, $\de$-caps and $(\de,s)$-visibility regions at $0$ for model domains of the form 
\be\label{E:models}
		D_{\balp,\bbet}:=\left\{(z_1,...,z_d)\in\Cd:
			y_d>f_{\balp,\bbet}(z')\right\}.
	\ee 
These computations are relatively straightforward, and have been placed in Appendix~\ref{SS:Models}. 

We first prove \eqref{eq:cap}.   Since $S(0;\de)\subset bD^w$ and $y_d = f(z',x_d)$ for $z_d \in bD^w$,  $S(0;\de)$ is the graph of $f$ over 
	\bes
		\wt S(0;\de) =  \{(z',x_d):x_d^2+f(z',x_d)^2\leq\de^2\}.
	\ees
Similarly,  $S^{D_{\balp,\bbet}}(0;\de)$ is the graph of $f_{\balp,\bbet}$ over 
$$ \wt S^{D_{\balp,\bbet}}(0;\de)  = \{ (z',x_d): x_d^2+f_{\balp,\bbet}(z')^2\leq\de^2\}.  $$
Thus, by $(a)$ and $(d)$ above, there exist $a_D,\de_D>0$, such that 
	\be\label{E:nest1}
		\wt S^{D_{\balp,\bbet}}(0;\de-a_D\de^{3/2})\subseteq\wt S(0;\de)
			\subseteq \wt S^{D_{\balp,\bbet}}(0;\de+a_D\de^{3/2}),\quad  \de<\de_D. 
	\ee
Observe that when $S$ and $f$ are either $S(0;\de)$ and $f$, respectively, or $S^{D_{\balp,\bbet}}(0;\de)$ and $f_{\balp,\bbet}$, respectively, then
\bea\label{E:voltosa}
|\lambda_{2d-1}(\wt S)-\sigma(S)|
=\left|\int_{\wt S}d\sigma(z)-\int_Sd\sigma(z)\right|
&=&\int_{\wt S}\left(\sqrt{1+||\nabla f(z',x_d)||^2}-1\right) d\lambda_{2d-1}
\notag \\
&=&\lambda_{2d-1}(\wt S)O_D(\de)\quad \text{as}\ \de\rightarrow 0.
\eea Now, \eqref{eq:cap} follows from \eqref{E:nest1} and  \eqref{E:modelcap}  once we recall from Lemma~\ref{L:deffn} that $16\nu_D(w)=v_{\balp,\bbet}^2$.

Next, we prove \eqref{eq:cut} for $TC$ instead of $C$. Then, once we prove \eqref{eq:approxcut}, \eqref{eq:cut} will automatically follow. Recall that $TC(0;\de)=T_{r_0(\cdot;\de)}(S(0;\de))$. Now, setting $n(z)=\sqrt{1+||\nabla f(z',x_d)||^2}$ and $\eta_d(z)=-\bdy_{x_d}f(z',x_d)+i$, we have by \eqref{E:indepth} and \eqref{e:bdfeqn} that for $z\in S(0;\de)$,
{
	\bea
r_0(z;\de)&=&|\hat\eta_d(z)|^{-2}\left(
\sqrt{(\rea(\hat\eta_d(z)\overline{z_d}))^2+|\hat\eta_d(z)|^2(\de^2-|z_d|^2)}
	-\rea(\hat\eta_d(z)\overline{z_d}) \right)\notag\\
	&=&	\frac{\sqrt{(\rea(\eta_d(z)\overline{z_d}))^2+(1+(\bdy_{x_d}f(z',x_d))^2)(\de^2-x_d^2-y_d^2)}
		-y_d+x_d\bdy_{x_d}f(z',x_d)}{|\hat\eta_d(z)|^2n(z)}   \notag\\
	&=&	\frac{\sqrt{\de^2-x_d^2+(\de^2-y_d^2)(\bdy_{x_d}f(z',x_d))^2-2y_dx_d\bdy_{x_d}f(z',x_d)}
		-f_{\balp,\bbet}(z')+O_D(\de^{\frac{3}{2}})}{|\hat\eta_d(z)|^2n(z)}   \notag\\
		&=&\sqrt{\de^2-x_d^2}-f_{\balp,\bbet}(z')+O_D(\de^{5/4}), \label{E:depth2}  
	\eea }
as $\de\rightarrow 0$.
Here, we have used that $\sqrt{|a|}-\sqrt{|b|}\leq \sqrt{a+b}\leq \sqrt{|a|}+\sqrt{|b|}$, and $x_d,y_d,(\bdy_{x_d}f)^2=O_D(\de)$ on $S(0;\de)$. Thus, by Roccaforte's formula (see Theorem~\ref{thm:rf}) and arguing as in the proof of \eqref{eq:cap} above, 
	\beas
	\lambda\big(TC(0;\de)\big)
&=&\int_{\wt S(0;\de)} \left( \sqrt{\de^2-x_d^2}-f_{\balp,\bbet}(z') \right) \,d\lambda_{2d-1}(z',x_d)
		+O_D(\de^{d+5/4}),\\
&=&\int_{\wt S^{D_{\balp,\bbet}}(0;\de)} \left( \sqrt{\de^2-x_d^2}-f_{\balp,\bbet}(z') \right) \,d\lambda_{2d-1}(z',x_d)
	+O_D(\de^{d+5/4}),\\
	&=& \lambda\big(C^{D_{\balp,\bbet}}(0;\de)\big)+O_D(\de^{d+5/4})\qquad \text{as}\ \de\rightarrow 0.
	\eeas
Now, invoking Theorem~\ref{T:modelcomp}, we have that for some $\de_D,C_D>0$, 
	\bes
		\left|\lambda\big(TC(w;\de)\big)-
\frac{h_{d+1}}{2}k_d(w)\,\de^{d+1}\right|
	\leq C_D\de^{d+\frac{5}{4}},\qquad \de<\de_D. 
	\ees

To prove \eqref{eq:approxcut}, we must consider the portion of $C(0;\de)$ that is not contained in $TC(0;\de)$.  By Corollary~\ref{C:outertube}, we may assume that $C_D,\de_D>0$ are such that $C(0;\de)\subset T_{C_D\de}(S(0;C_D\de))$, and each $\zt\in C(0;\de)$ has a unique projection $z$ onto $S(0;C_D\de)$. Thus, $\zt=z+t\hat\eta(z)$ for some $z\in S(0;C_D\de)$ and $t<C_D\de$.  Writing $z=(z',x_d+iy_d)$, the condition $z+t\hat\eta(z)\in C(0;\de)$ holds if and only if $t>0$, and $|L(\zt,0)|<\de$, i.e.,
	\be\label{E:quadratic}
	x_d^2+y_d^2+\frac{t}{n(z)}(2y_d-2x_d\bdy_{x_d}f(z',x_d))
	+\frac{t^2}{n(z)^2}(1+(\bdy_{x_d}f(z',x_d))^2)^2< \de^2,
	\ee
where $n(z)=\sqrt{1+||\nabla f(z',x_d)||^2}$. The solution set of the above pair of inequalities is $(0,r_0(z;\de))$ when $x_d^2+y_d^2\leq \de^2$, i.e., when $z\in S(0;\de)$. In this case, $\zt\in TC(0;\de)$. On the other hand, if $x_d^2+y_d^2>\de^2$ and $y_d\geq x_d\bdy_{x_d}f(z',x_d)$, the solution set is empty. Thus, 
\bes
	C(0;\de)\setminus TC(0;\de)\subseteq
	T_{t_{\max}}\big(S''(\de)\big),
	\ees
where $S''(\de)=\{z\in S(0;C_D\de)\setminus S(0;\de):x_d\bdy_{x_d}f(z',x_d)-y_d>0\}$ and $t_{\max}(z)$ is the maximum positive solution of \eqref{E:quadratic} for $z=(z',x_d+if(z',x_d))$. As in the  proof of \eqref{E:S'} in Lemma~\ref{L:depth}, $\sup\{||(z',x_d)||:(z',x_d)\in S''(\de)\}=O_D(\de)$ as $\de\rightarrow 0$. Thus, $\lambda_{2d-1}(S''(\de))=O_D(\de^{2d-1})$
 Also, since 
	\bes
		t_{\max}(z)\leq \frac{(x_d\bdy_{x_d}f(z',x_d)-y_d)n(z)}{(1+(\bdy_{x_d}f(z',x_d))^2)^2}\qquad
	\text{when}\ z\in S''(\de), 
	\ees
$\sup_{S''(\de)}(t_{\max})=O_D(\de^2)$. Thus, 
\beas\lambda\big(C(0;\de)\setminus TC(0;\de)\big)=O_D(\sigma(S''(\de))\cdot \de^2)\\=O_D(\lambda_{2d-1}(S''(\de))\cdot \de^2)=O_D(\de^{2d+1})\eeas as $\de\rightarrow 0$. This completes the proof of \eqref{eq:approxcut}.  
 
Lastly, we prove \eqref{eq:vis}. {Note that $G_{t\de}(0;\de)=\{z\in bD:|L(0+t\de\hat\eta(0),z)|<\de\}$ is the graph of $f$ over 
	\beas
\wt G_{t\de}(0;\de)&=&\left\{(z',x_d):\left(t\de+q_1(z',x_d)\right)^2
+\left(x_d+q_2(z',x_d)\right)^2<\de^2n(z)^2 \right\},
	\eeas 
where 
\beas
q_1(z',x_d)&=&2\sum_{j=1}^{d-1}\rea( z_j\bdy_{z_j}f(z',x_d))-f(z',x_d)+x_d\bdy_{x_d}f(z',x_d),\\ q_2(z',x_d)&=&2\sum_{j=1}^{d-1}\ima( z_j\bdy_{z_j}f(z',x_d))-\bdy_{x_d}f(z',x_d)(t\de-f(z',x_d)).
\eeas
Similarly, $G^{D_{\balp,\bbet}}_{t\de}(0;\de)$ is the graph of $f_{\balp,\bbet}$ over 
	\bes
\wt G^{D_{\balp,\bbet}}_{t\de}(0;\de)
=\left\{(z',x_d):(t\de+q_1^{\balp,\bbet}(z',x_d))^2
+(x_d+q_2^{\balp,\bbet}(z',x_d))^2<\de^2n^{\balp,\bbet}(z)^2\right\},
	\ees
where $q_1^{\balp,\bbet}(z',x_d)=\sum_{j=1}^{d-1}\left(\alpha_j|z_j|^2+\beta_j\rea\zbar_j^2\right)$, $q_2^{\balp,\bbet}(z',x_d)=2\sum_{j=1}^{d-1}\beta_j\ima z_j^2$, and $n^{\balp,\bbet}(z',x_d)=\sqrt{1+4\sum_{j=1}^{d-1}|\alpha_jz_j+\beta_j\zbar_j|^2}$. Now, from $(a)$, we have that
\bes
q_j(z',x_d)=q_j^{\balp,\bbet}(z',x_d)+O_D(||(z',x_d)||),\quad j=1,2.  
\ees}
Thus, owing to $(d)$, there exist $a_D,\de_D>0$ such that
	\be
	\label{E:nestG}
\wt G_{t\de}^{D_{\balp,\bbet}}(0;\de(1-a_D\de^{\frac{1}{2}}))\subseteq \wt G_{t\de}(0;\de)
		\subseteq
 \wt G_{t\de}^{D_{\balp,\bbet}}(0;\de(1+a_D\de^{\frac{1}{2}})),\quad 0\leq t\leq 1-a_D\de^\frac{1}{2},
	\ee
for all $\de<\de_D$. Once again, as in \eqref{E:voltosa}, we have that  
\bes
|\lambda_{2d-1}(\wt G)-\sigma(G)|=\lambda_{2d-1}(\wt G)O_D(\de)\quad\text{as}\ \de\rightarrow 0
\ees
for both $G=G_{t\de}(0;\de)$ and $G=G_{t\de}^{D_{\balp,\bbet}}(0;\de)$, $t<1$. Moreover, since $h_d$ is Lipschitz on $[0,1]$, 
	\bes
\left|h_d\left(\frac{t}{1+O_D(\de^\frac{1}{2})}\right)-h_d\left(t\right)\right|	
			=O_D(\sqrt\de)\quad\text{as}\ \de\rightarrow 0,
	\ees
with constants independent of $t\in[0,1]$. Thus, the claim now follows from \eqref{E:nestG} and Theorem~\ref{T:modelcomp2}.
\end{proof}

Note that the estimate \eqref{eq:vis} has been obtained under the assumption that the depth function $\bde$ is a constant function. A more general estimate can be deduced from this special case. 

\begin{cor}\label{C:visibility}
Let $D$, $\nu_{D}$, $\kappa_d$, $h_d(t)$, and $k_d(w)$ be as in Theorem~\ref{T:CutsCapsGeneral}. Let $\bde=g\de$ for some positive function $g\in\cont^\alpha(bD)$, $\alpha\in(0,1)$, and $\de>0$. Then there exist $C_D,\de_D>0$ such that  for $w\in bD$ and $||\bde||_\infty<\de_D$, 
	\bea
	 \qquad 	 \left|\sigma\big(G_{t\bde(w)}(w;\bde)\big)- 
		 h_d(t)k_d(w)g(w)^d\,\de^d \right|\leq C_D||g||_\alpha^{d+1}\de^{d+\frac{\alpha}{2}},\ t\in \left[0,1-C_D\sqrt{\de}\right].
\label{eq:inhomvis} 
	\eea
\end{cor}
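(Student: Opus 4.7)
The plan is to sandwich the inhomogeneous visibility region $G_{t\bde(w)}(w;\bde)$ between two visibility regions with constant depth functions, and then invoke the uniform estimate \eqref{eq:vis}. The key observation is that each such region is confined to a neighborhood of $w$ of radius $\sim\sqrt{\de}$, where the $\alpha$-H\"older bound on $g$ makes $g$ nearly constant.

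First, for any $z\in G_{t\bde(w)}(w;\bde)$, the condition $r_z(w;\bde(z))\geq 0$ forces $w\in\overline{C(z;\bde(z))}$, so $|L(w,z)|\leq\bde(z)\leq\|g\|_\infty\de$. The lower bound in \eqref{E:sandwich} then gives $\|z-w\|\leq C_D\sqrt{\|g\|_\infty\de}$, and the H\"older continuity of $g$ yields
$$|g(z)-g(w)|\leq\eps,\qquad \eps:=C_D\|g\|_\alpha\|g\|_\infty^{\alpha/2}\de^{\alpha/2}.$$
The same argument applied with the constant depth functions $\bde^\pm\equiv(g(w)\pm\eps)\de$ shows that any point of $G_{tg(w)\de}(w;\bde^\pm)$ also lies in a $C_D\sqrt\de$-ball around $w$ (provided $\de$ is small enough that $g(w)-\eps>0$, which holds uniformly since $g$ is continuous and bounded below on compact $bD$). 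Consequently, on the supports of all three visibility regions, $g(w)-\eps\leq g\leq g(w)+\eps$; combined with the evident monotonicity of $r_z(w;\cdot)$ in its second argument, this produces the set inclusion
$$G_{tg(w)\de}(w;\bde^-)\ \subseteq\ G_{t\bde(w)}(w;\bde)\ \subseteq\ G_{tg(w)\de}(w;\bde^+).$$

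Next, I would apply \eqref{eq:vis} to each $\bde^\pm\equiv\de^\pm:=(g(w)\pm\eps)\de$. Writing $tg(w)\de=t^\pm\de^\pm$ with $t^\pm:=tg(w)/(g(w)\pm\eps)$, the hypothesis $t\in[0,1-C_D\sqrt\de]$ together with $\eps=O(\de^{\alpha/2})$ and the uniform lower bound on $g$ ensures (after enlarging $C_D$ appropriately) that $t^\pm\in[0,1-C_D'\sqrt{\de^\pm}]$, so that
$$\sigma\bigl(G_{tg(w)\de}(w;\bde^\pm)\bigr)=h_d(t^\pm)k_d(w)(\de^\pm)^d+O_D\bigl((\de^\pm)^{d+1/2}\bigr).$$
Expanding $(\de^\pm)^d=g(w)^d\de^d(1\pm\eps/g(w))^d$ and using the Lipschitz continuity of $h_d$ on $[0,1]$ to write $h_d(t^\pm)=h_d(t)+O(\eps/g(w))$ produces a leading term $h_d(t)k_d(w)g(w)^d\de^d$ plus corrections of order $\eps\,g(w)^{d-1}\de^d$ and $(\|g\|_\infty\de)^{d+1/2}$. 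Since $\eps\lesssim\|g\|_\alpha^{1+\alpha/2}\de^{\alpha/2}$, $\de$ is bounded, and $\alpha<1$, both corrections are bounded above by $C_D\|g\|_\alpha^{d+1}\de^{d+\alpha/2}$. The sandwich inclusion then gives the corollary.

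The main technical point is justifying the set inclusion — specifically, ensuring that the H\"older-controlled neighborhood of $w$ simultaneously contains all three visibility regions, so that the pointwise estimate $g(w)-\eps\leq g\leq g(w)+\eps$ applies on each of their supports. This is essentially automatic because each sandwiching region is itself a visibility region with small constant depth, whose support is confined to a ball of radius $\lesssim\sqrt\de$ by \eqref{E:sandwich}. The remaining arithmetic — verifying the admissibility range for $t^\pm$ and collecting the error terms — is routine.
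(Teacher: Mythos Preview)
Your argument is correct and follows essentially the same route as the paper: bound the oscillation of $g$ on the $O(\sqrt{\de})$-support via H\"older continuity, sandwich $G_{t\bde(w)}(w;\bde)$ between two constant-depth visibility regions $G_{t^\pm\de^\pm}(w;\de^\pm)$, and apply \eqref{eq:vis} to each. The paper's proof is nearly identical, only tracking the error as $C_D\|g\|_\alpha^2\de^{\alpha/2}$ rather than your $C_D\|g\|_\alpha\|g\|_\infty^{\alpha/2}\de^{\alpha/2}$ (equivalent since $\|g\|_\infty\leq\|g\|_\alpha$), and it cites the proof of \eqref{E:maxdepth} for the localization step you spell out explicitly.
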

\begin{proof} First, as in the proof of \eqref{E:maxdepth}, there exist $C_D,\de_D>0$ such that for any $w\in bD$, $||\bde||_\infty<\de$, and $\zt\in G_{t\bde(w)}(w;\bde)$,
	\bes
		0< g(w)-C_D||g||_\alpha^2\de^{\frac{\alpha}{2}}\leq g(\zt)\leq g(w)+C_D||g||_\alpha^2\de^{\frac{\alpha}{2}}.
	\ees
Now, for a fixed $w\in bD$, let $\bde_1\equiv \de_1$ and $\bde_2\equiv \de_2$ be constant functions on $bD$ given by
	\beas
\de_1&:=& g(w)\de\left(1-C_D||g||_\alpha^2\de^{\frac{\alpha}{2}}\right),\\
\de_2&:=& g(w)\de\left(1+C_D||g||_\alpha^2\de^{\frac{\alpha}{2}}\right). 
	\eeas
Then,
	\bes
			G_1:=G_{t_1\bde_1(w)}(w;\bde_1)\subseteq G_{t\bde(w)}(w;\bde)
	\subseteq G_{t_2\bde_2(w)}(w;\bde_2)=:G_2,
	\ees 
where $t_1=t(1-C_D||g||_\alpha^2\de^{\frac{\alpha}{2}})^{-1}$ and $t_2=t(1+C_D||g||_\alpha^2\de^{\frac{\alpha}{2}})^{-1}$. Since $t\in[0,1-C_D\sqrt{\de}]$, shrinking $\de_D>0$ if necessary, we obtain that $t_j\in [0,1-C_D\sqrt{\de_j}]$ for $j=1,2$. Thus, the estimate \eqref{eq:vis} applies to both $G_1$ and $G_2$, and we obtain \eqref{eq:inhomvis}.
\end{proof}

\subsection{Containment and coverage}\label{SS:quasimetric} 

In this section, we express the containment condition on an induced polyhedron $P$ {equivalently} as a purely {boundary}-based coverage condition on the caps of $P$. In \cite{LaSt}, the authors consider a natural quasimetric on the boundary of a strongly $\C$-convex domain. We consider an appropriate modification of their construction. The balls in this quasimetric coincide with caps of induced polyhedra. This allows us to convert the containment condition on an induced polyhedron into a covering condition on $bD$.

\begin{lemma}[{\cite[Proposition 3.3]{LaSt}}] \label{L:quasimetric} Let $D$ and $L$ be as in Definition~\ref{D:geom}. The function $\mathtt d:bD\times bD\rightarrow [0,\infty)$ given by
\bes	
	\mathtt d(\zt,w)=|L(\zt,w)|^{\frac{1}{2}}
\ees
is a quasimetric on $bD$, i.e., there is a $\mathfrak q_D\geq 1$ such that 
	\begin{enumerate}
		\item [$(a)$] $\mathtt d(\zt,w)=0$ if and only if $\zt=w$,
		\item [$(b)$] (quasi-symmetry) $\mathtt d(\zt,w)\leq \mathfrak q_D\, \mathtt d(w,\zt)$ for all $\zt,w\in bD$, and
		\item [$(c)$] (quasi-triangle inequality) $\mathtt d(\zt,w)\leq \mathfrak q_D(\mathtt d(\zt,z)+\mathtt d(z,w))$, for all $z,w,\zt\in bD$. 
	\end{enumerate}
\end{lemma}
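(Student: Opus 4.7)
The plan is to prove (a), (b), (c) separately, with (a) following immediately from the strong $\C$-convexity hypothesis, (c) reducing to an algebraic identity combined with the sandwich bound of Lemma~\ref{L:Lest}, and (b) requiring a careful local analysis via Lemma~\ref{L:Model}; I expect (b) to be the main obstacle.

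For (a), if $\mathtt d(\zeta,w)=0$ for $\zeta,w\in bD$, then $L(\zeta,w)=0$, so $\zeta\in w+\mathcal H_wbD$. As noted right after the definition of strong $\C$-convexity in Section~\ref{SS:DomPoly}, we have $(w+\mathcal H_wbD)\cap\overline D=\{w\}$; since $bD\subset\overline D$, this forces $\zeta=w$.

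For (c), I would start from the identity
\bes
L(\zeta,w)-L(z,w)-L(\zeta,z)=\left\langle \frac{\bdy\rho(w)}{||\bdy\rho(w)||}-\frac{\bdy\rho(z)}{||\bdy\rho(z)||},\,z-\zeta\right\rangle,
\ees
obtained by unpacking Definition~\ref{D:geom}(b). The $\cont^2$-regularity of $\rho$ with $\bdy\rho\ne 0$ on $bD$ makes the map $w\mapsto \bdy\rho(w)/||\bdy\rho(w)||$ Lipschitz, so the first factor on the right is $O_D(||w-z||)$. Applying the lower sandwich bound in \eqref{E:sandwich} twice gives $||w-z||\lesssim |L(z,w)|^{1/2}$ and $||z-\zeta||\lesssim |L(\zeta,z)|^{1/2}$, so
\bes
|L(\zeta,w)|\leq |L(z,w)|+|L(\zeta,z)|+C\,|L(z,w)|^{1/2}|L(\zeta,z)|^{1/2}\leq C'\left(|L(z,w)|^{1/2}+|L(\zeta,z)|^{1/2}\right)^{2}.
\ees
Taking square roots gives (c).

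Part (b) is where the main difficulty lies. By compactness and positivity of $|L|$ off the diagonal (from (a) and continuity), it suffices to prove $|L(\zeta,w)|\asymp |L(w,\zeta)|$ for $\zeta$ in a neighborhood of $w$ in $bD$, with constants uniform in $w\in bD$. For this I would fix $w$ and normalize via Lemma~\ref{L:Model} so that $w=0$ and $D=D^w=\{y_d>f_w(z',x_d)\}$, taking the defining function $\rho_w(z)=f_w(z',x_d)-y_d$, for which $\bdy\rho_w(0)=(0,\ldots,0,i/2)$. A direct computation then gives the exact identity $L(\zeta,0)=-i\zeta_d$, while Taylor-expanding $\bdy\rho_w(\zeta)/||\bdy\rho_w(\zeta)||$ and using $y_d=f_w(\zeta',x_d)=O_D(||\zeta||^2)$ for $\zeta\in bD^w$ produces
\bes
L(0,\zeta)=i\zeta_d+O_D(||\zeta||^2)\quad\text{as}\ \zeta\to 0,\ \zeta\in bD.
\ees
The delicate point is that the error term and the modulus $|\zeta_d|$ can both be of order $||\zeta||^2$, so the subordination is not automatic. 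To absorb the error I would invoke the quantitative strong $\C$-convexity bound \eqref{E:strcvx2}, which yields $|\zeta_d|=|L(\zeta,0)|\gtrsim ||\zeta||^2$; a short case analysis (separating $|\zeta_d|\geq 2M||\zeta||^2$ from its complement) then gives $|L(0,\zeta)|\asymp |\zeta_d|=|L(\zeta,0)|$ in the local regime. Uniformity of the implicit constants in $w$ follows from the continuous dependence of the model data on $w$ in Lemma~\ref{L:Model}(a) together with the compactness of $bD$.
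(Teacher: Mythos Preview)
Your arguments for (a) and (c) are correct and essentially coincide with the paper's. The difference lies in (b), which you flag as the main obstacle and handle by localizing via Lemma~\ref{L:Model}, Taylor-expanding $L(0,\zeta)$, and running a case analysis using \eqref{E:strcvx2} to absorb the $O(||\zeta||^2)$ error into $|\zeta_d|$. This works, but the paper's route is much shorter and entirely global: one observes that the \emph{same} identity you used for (c) applies, namely
\[
\big||L(\zeta,w)|-|L(w,\zeta)|\big|\;\leq\;|L(\zeta,w)+L(w,\zeta)|\;=\;\left|\left\langle \frac{\bdy\rho(w)}{||\bdy\rho(w)||}-\frac{\bdy\rho(\zeta)}{||\bdy\rho(\zeta)||},\,w-\zeta\right\rangle\right|\;\lesssim\;||\zeta-w||^2,
\]
and then \eqref{E:sandwich} gives $||\zeta-w||^2\lesssim |L(w,\zeta)|$, so $|L(\zeta,w)|\leq (1+C)|L(w,\zeta)|$ directly. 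No localization, no Taylor expansion, no case split, and uniformity in $w$ is automatic. Your approach does buy you something conceptually---it makes explicit that in model coordinates $|L(\zeta,0)|=|\zeta_d|$ and the asymmetry is a genuinely second-order effect---but for the bare statement of (b) the paper's one-line trick is the cleaner choice, and it is worth noting that you already had all the ingredients in hand from your proof of (c).
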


\begin{proof} Property $(a)$ follows from \eqref{E:sandwich}. To obtain $(b)$, note that by the $\cont^2$-smoothness of $\rho$,
	\bes
		|\mathtt d(\zt,w)^2-\mathtt d(w,\zt)^2|\leq \big||L(\zt,w)|-|L(w,\zt)|\big|
	=\left|\left<\frac{\bdy\rho(w)}{||\bdy\rho(w)||}
	-\frac{\bdy\rho(\zt)}{||\bdy\rho(\zt)||},\zt-w\right>\right|\lesssim ||\zt-w||^2.
	\ees
Now, the claim follows from \eqref{E:sandwich}. Finally, for claim $(c)$, observe that
	\beas
		&&\mathtt d^2(\zt,w)-\mathtt d^2(z,w)-\mathtt d^2(\zt,z)\\
&&\leq \left|\left<\frac{\bdy\rho(w)}{||\bdy\rho(w)||},w-\zt\right>
-\left<\frac{\bdy\rho(w)}{||\bdy\rho(w)||},w-z\right>
-\left<\frac{\bdy\rho(z)}{||\bdy\rho(z)||},z-\zt\right>\right|\\
&&\lesssim
\left|\left<\bdy\rho(w)-\bdy\rho(z),z-\zt\right>\right|\\
&&\lesssim ||w-z|| \times ||z-\zt||\\
&&\lesssim \mathtt d(z,w)\mathtt d(\zt,z),
\eeas
where the final two steps follow from the $\cont^2$-smoothness of $\rho$ and \eqref{E:sandwich}, respectively. Thus, we obtain $(c)$.
\end{proof}

We now observe that the containment condition $P(\varphi;\de)\subset D$ can indeed be reduced to a covering condition on $bD$ in terms of the caps of $P(\varphi;\de)$, which are $\mathtt d$-balls since 
	\bes
		S(w;\de):=\{\zt\in bD:\mathtt d(\zt,w)\leq \sqrt{\de}\}.
	\ees

\begin{lemma}\label{L:inc=cov}
Let $D$ be as in Definition~\ref{D:geom}, $\varphi=\{w^1,...,w^n\}\subset bD$, and $\bde:bD\rightarrow(0,\infty)$. Then, $P(\varphi;\bde)$ is contained in $D$ if and only if $bD=S(\varphi;\bde)$, where
	\bes
		S(\varphi;\bde):=\bigcup_{j=1}^{n}S(w^j;\bde(w^j)).
	\ees
\end{lemma}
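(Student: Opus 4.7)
My plan is to reformulate coverage as disjointness, then run two short topological arguments (one per direction), both built around the observation that the intersection
\[
A := \bigcap_{j=1}^{n}\bigl\{z\in\Cd:|L(z,w^j)|>\bde(w^j)\bigr\}
\]
is open, since each $L(\cdot,w^j)$ is continuous. By the definitions of $S(w;\de)$ and of $P(\varphi;\bde)$, the condition $bD=S(\varphi;\bde)$ is equivalent to $A\cap bD=\emptyset$, and $P(\varphi;\bde)$ is the union of the connected components of $A$ that meet $D$.

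For the sufficiency direction (coverage implies containment), I would let $C$ be a connected component of $A$ with $C\cap D\neq\emptyset$. Since $\Cd$ is locally connected and $A$ is open, $C$ is open. The assumption $A\cap bD=\emptyset$ gives $C\subset D\cup(\Cd\setminus\overline D)$. If $C\cap(\Cd\setminus\overline D)$ were also nonempty, then $C=(C\cap D)\sqcup (C\cap(\Cd\setminus\overline D))$ would be a nontrivial separation of $C$ into two open sets, contradicting connectedness. Hence $C\subset D$. Taking the union over all such components gives $P(\varphi;\bde)\subset D$.

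For the necessity direction (containment implies coverage), I would argue by contrapositive. If $bD\not\subset S(\varphi;\bde)$, pick $\zt\in bD$ with $|L(\zt,w^j)|>\bde(w^j)$ for every $j$; then $\zt\in A$. Since $A$ is open, it contains a Euclidean ball around $\zt$, which necessarily meets $D$ (as $\zt\in bD$). Consequently, the connected component $C$ of $A$ containing $\zt$ satisfies $C\cap D\neq\emptyset$, so $C\subset P(\varphi;\bde)$; but $\zt\in C\cap bD$ witnesses $P(\varphi;\bde)\not\subset D$.

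The argument is essentially point-set topological rather than geometric; the only place one has to be a bit careful is in invoking local connectedness of $\Cd$ to conclude that the components of the open set $A$ are themselves open, so that the connectedness-disconnection step in the sufficiency direction is legitimate. I do not foresee this as a genuine obstacle, and no use of strong $\C$-convexity or the estimates from Section~\ref{S:GeomEst} is needed.
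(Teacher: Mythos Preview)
Your proposal is correct and follows essentially the same route as the paper: both proofs reduce the statement to the equivalence of $P(\varphi;\bde)\subset D$ with $A\cap bD=\emptyset$, and both settle each direction via the same connectedness argument (a component of the open set $A$ meeting $D$ but not $bD$ cannot meet $\Cd\setminus\overline D$). You are slightly more explicit about openness of $A$ and of its components, which the paper uses without comment.
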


\begin{proof}Recall that $P(\varphi;\bde)$ is the union of those connected components of 
\bes
	\wt P(\varphi;\bde)=
		\bigcap_{j=1}^{n}\left\{z\in \Cd:\left|L(z,w^j)\right|>\bde(w^j)\right\}
	\ees
that intersect $D$. Since $S(\varphi;\bde)=bD\setminus \wt P(\varphi;\bde)$, $bD\neq S(\varphi;\bde)$ if and only if $bD\cap \wt P(\varphi;\bde)$ is nonempty. However, if $z\in bD\cap \wt P(\varphi;\bde)$, then $z$ must be contained in a connected component of $\wt P(\varphi;\bde)$ that intersects $D$, i.e., $ bD\cap \wt P(\varphi;\bde)=bD\cap P(\varphi;\bde)$. Thus, it suffices to show that $P(\varphi;\bde)\subset D$ if and only if $bD\cap P(\varphi;\bde)$ is empty. The forward implication is trivial. For the converse, we argue by contraposition: if $P$ is a connected component of $P(\varphi;\de)$ that is not contained in $D$, then it must intersect $bD$. Otherwise $P$ is the disjoint union of the nonempty open sets $P\cap D$ and $P\cap(\Cd\setminus \overline D)$ which is impossible.
\end{proof}

In order to study the covering problem on $bD$, we need the existence of a `good' cover of $bD$. The existence of this cover is due to the fact that balls in the quasimetric $\mathtt d_D$ satisfy a Vitali-type covering lemma; see \cite[Lemma 4.1.1]{KrPa} for a proof that generalizes to quasimetrics. 

\begin{lemma}\label{L:Vitali}  Let $D$ be as in Definition~\ref{D:geom}. There is a constant $0< \mathfrak K=\mathfrak K_D \leq 1/9$ such that for any $\de>0$, there exist $m=m_\de\in\N$ and $p^1,...,p^m\in bD$ such that 
	\begin{itemize}
\item [$(a)$] $bD=S(p^1;\de)\cup\cdots\cup S(p^m;\de)$, and
\item [$(b)$] $S(p^1,\mathfrak K \:\de),...,S(p^m; \mathfrak K\:\de)$ are pairwise disjoint.
\end{itemize}
\end{lemma}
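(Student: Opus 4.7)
The proof will be a standard Vitali-type argument adapted to the quasimetric $\mathtt d$ from Lemma~\ref{L:quasimetric}. The plan is to produce the collection $\{p^1,\dots,p^m\}$ as a maximal family of $\mathtt d$-balls of radius $\sqrt{\mathfrak K\de}$ that remain pairwise disjoint, and then show that the corresponding balls of radius $\sqrt{\de}$ cover $bD$. The constant $\mathfrak K$ is chosen small enough in terms of $\mathfrak q_D$ to absorb the quasi-symmetry and quasi-triangle constants.

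First I would fix the constant by setting
$$ \mathfrak K \;=\; \min\!\left\{\tfrac{1}{9},\; \tfrac{1}{\mathfrak q_D^{\,2}(\mathfrak q_D+1)^{2}}\right\}, $$
which satisfies $0<\mathfrak K\leq 1/9$ automatically. Next, invoking compactness of $bD$ and the fact that each cap $S(w;\mathfrak K\de)$ contains a Euclidean ball in $bD$ of positive radius (by the upper bound in \eqref{E:sandwich}), a greedy selection (or Zorn's lemma) yields a finite maximal family $p^1,\dots,p^m\in bD$ such that the caps $S(p^j;\mathfrak K\de)$ are pairwise disjoint; this is (b). Finiteness of $m$ follows from a volume-packing argument: each $S(p^j;\mathfrak K\de)$ contains a fixed positive amount of surface measure by an appeal to \eqref{eq:cap} (or the crude bound from \eqref{E:sandwich}), and the total surface area $\sigma(bD)$ is finite.

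For (a), I would argue by contraposition using maximality. Suppose $z\in bD$ were not contained in $\bigcup_j S(p^j;\de)$. I claim the small cap $S(z;\mathfrak K\de)$ is disjoint from every $S(p^j;\mathfrak K\de)$, which would contradict the maximality of $\{p^1,\dots,p^m\}$. Indeed, if some $y\in S(z;\mathfrak K\de)\cap S(p^j;\mathfrak K\de)$ existed, then $\mathtt d(y,z),\mathtt d(y,p^j)\leq \sqrt{\mathfrak K\de}$. Quasi-symmetry (Lemma~\ref{L:quasimetric}(b)) gives $\mathtt d(z,y)\leq \mathfrak q_D\sqrt{\mathfrak K\de}$, and then the quasi-triangle inequality (Lemma~\ref{L:quasimetric}(c)) yields
$$ \mathtt d(z,p^j) \;\leq\; \mathfrak q_D\bigl(\mathtt d(z,y)+\mathtt d(y,p^j)\bigr) \;\leq\; \mathfrak q_D(\mathfrak q_D+1)\sqrt{\mathfrak K\de} \;\leq\; \sqrt{\de}, $$
by the choice of $\mathfrak K$, placing $z\in S(p^j;\de)$ and giving a contradiction. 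Therefore $S(z;\mathfrak K\de)$ must meet some $S(p^j;\mathfrak K\de)$, and the above computation still applies to show $z\in S(p^j;\de)$, proving (a).

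The only minor obstacle is bookkeeping the asymmetry: unlike the metric case, one must carefully track which argument of $\mathtt d$ is controlled, and pay a factor of $\mathfrak q_D$ when switching. This is precisely why the constant $\mathfrak K$ must be taken of order $\mathfrak q_D^{-4}$ rather than the classical $1/4$; the bound $1/9$ stated in the lemma is then a cosmetic upper bound that one imposes for convenience (and is automatically consistent since $\mathfrak q_D\geq 1$). There is no substantial analytic difficulty beyond this; the argument is purely a geometric/combinatorial consequence of Lemma~\ref{L:quasimetric} together with the compactness of $bD$.
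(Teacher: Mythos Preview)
Your proof is correct and follows the standard Vitali-type argument for quasimetric spaces, which is precisely what the paper invokes by citing \cite[Lemma~4.1.1]{KrPa} rather than writing out a proof. Your explicit choice of $\mathfrak K$ in terms of $\mathfrak q_D$, the greedy maximal-packing construction, and the quasi-triangle/quasi-symmetry bookkeeping are exactly the content of that cited result specialized to the caps $S(w;\de)=\{\zt:\mathtt d(\zt,w)\le\sqrt\de\}$.
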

The dilation constant in the above lemma depends on the quasimetric constant $\mathfrak q_D$.  In the case of the unit ball $\mathbb B^d$, $\mathtt d$ is in fact a metric on $b\mathbb B^d$ (e.g., see \cite[Section~5.1]{Ru08}), so we may choose $\mathfrak q_{\mathbb B^d}$ and $\mathfrak K_{\mathbb B^d}$ as $1$ and $1/9$,  respectively. 

\subsection{Nearest-neighbor estimates}\label{SS:NN}

So far,  we have focussed on estimates for individual cuts and caps, but for variance bounds and normal approximation results,  it is crucial to understand volume and area contributions of intersecting cuts and caps. In this subsection, we collect a couple of such estimates.

First, we quantify the following statement: cuts based at points that are sufficiently far away from each other (in $\mathtt d$)  cannot intersect.  

\begin{Prop}\label{P:NN1} Let $D$ be as in Definition~\ref{D:geom}. There are constants $\de_D,A_D>0$ such that 
	\bes
		\sigma\{\zt\in bD:C(\zt;\de)\cap C(w;\de)\neq \emptyset\}\leq A_D\de^{d}
			\qquad  \forall w\in bD,\ \de<\de_D. 
	\ees
\end{Prop}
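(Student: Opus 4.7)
The plan is to reduce the statement to a cap-area estimate via the quasimetric $\mathtt{d}$, and then invoke Theorem~\ref{T:CutsCapsGeneral}. The crux is the following claim: there is a constant $c_D>0$ (depending only on $D$) such that, for all sufficiently small $\delta>0$ and all $w\in bD$,
\begin{equation}\label{E:NN1plan}
	\{\zeta \in bD : C(\zeta;\delta) \cap C(w;\delta) \neq \emptyset\} \subseteq S(w; c_D\,\delta).
\end{equation}
Once \eqref{E:NN1plan} is established, \eqref{eq:cap} in Theorem~\ref{T:CutsCapsGeneral} yields
$\sigma(S(w; c_D\delta)) \leq A_D \delta^d$ for some $A_D>0$ and all $\delta<\delta_D$, which is the desired conclusion.

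To prove \eqref{E:NN1plan}, suppose $z \in C(\zeta;\delta) \cap C(w;\delta)$, so $|L(z,\zeta)|<\delta$ and $|L(z,w)|<\delta$. Applying Corollary~\ref{C:outertube} to the cut $C(w;\delta)$, for $\delta<\delta_D$ small enough we may write $z = z^* + t\,\hat\eta(z^*)$ with $z^*\in bD$ and $t \in [0, C_D\delta)$, and in particular $t<t_D$ where $t_D$ is the constant from \eqref{E:normal}. By \eqref{E:normal} applied with base point $z^*$ and source points $\zeta$ and $w$ respectively, we obtain
\begin{equation*}
	|L(z^*,\zeta)| \lesssim_D |L(z,\zeta)| + t \lesssim_D \delta,\qquad |L(z^*,w)| \lesssim_D |L(z,w)| + t \lesssim_D \delta,
\end{equation*}
so $z^* \in S(\zeta; c\delta) \cap S(w; c\delta)$ for some $c=c(D)>0$. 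Equivalently, $\mathtt{d}(z^*,\zeta) \leq \sqrt{c\delta}$ and $\mathtt{d}(z^*,w) \leq \sqrt{c\delta}$.

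Now apply the quasi-symmetry and quasi-triangle inequality from Lemma~\ref{L:quasimetric}: with $\mathfrak{q}_D \geq 1$ the quasimetric constant,
\begin{equation*}
	\mathtt{d}(\zeta,w) \leq \mathfrak{q}_D\bigl(\mathtt{d}(\zeta, z^*) + \mathtt{d}(z^*, w)\bigr) \leq \mathfrak{q}_D\bigl(\mathfrak{q}_D\,\mathtt{d}(z^*,\zeta) + \mathtt{d}(z^*,w)\bigr) \leq \mathfrak{q}_D(\mathfrak{q}_D+1)\sqrt{c\delta}.
\end{equation*}
Squaring gives $|L(\zeta,w)| \leq c_D\,\delta$ for an appropriate $c_D>0$, i.e., $\zeta \in S(w; c_D\delta)$, proving \eqref{E:NN1plan}. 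The only mild subtlety is ensuring that $\delta_D$ is taken small enough that both Corollary~\ref{C:outertube} and \eqref{E:normal} apply uniformly in the base point $z^*$; this is handled by a single choice of $\delta_D$ depending only on $D$. I do not anticipate any serious obstacle here, as all the ingredients are already available in the preceding subsections.
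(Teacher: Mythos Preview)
Your proposal is correct and follows essentially the same approach as the paper: project a common point of the two cuts onto $bD$ (via Corollary~\ref{C:outertube} and the tubular estimates), observe that the projection lies in caps of radius $\approx\delta$ at both $w$ and $\zeta$, then use the quasimetric properties of $\mathtt d$ (Lemma~\ref{L:quasimetric}) to conclude $\zeta\in S(w;c_D\delta)$ and invoke \eqref{eq:cap}. The only minor difference is that the paper applies Corollary~\ref{C:outertube} to both cuts and reads off cap membership directly from the tube inclusion, whereas you project once and then appeal to \eqref{E:normal} for both source points; this is a cosmetic variation.
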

\begin{proof}  Let $C_D,\de_D$ be as in Corollary~\ref{C:outertube}. Let $w,\zt\in bD$ be such that $C(w;\de)\cap C(\zt;\de)\neq \emptyset$ for some $\de<\de_D$. Then, $T_{C_D\de}(S(w;C_D\de))\cap T_{C_D\de}(S(\zt;C_D\de))\neq \emptyset$. We may assume that each $z\in T_{C_D\de}(bD)$ admits a unique projection onto $bD$. Then, there is a $z\in S(w;C_D\de)\cap S(\zt;C_D\de)$. By the quasi-symmetry of $\mathtt{d}$, $\zt\in S(w;C_D\de)$, and the claim follows from \eqref{eq:cap}.
\end{proof}

Next, we bound (from below) the volume increment caused by adding a cut at a point that is sufficiently isolated from the rest of the source points. Note that this result is stated for tubular cuts, which suffices for our purposes. 

\begin{Prop}\label{P:NN2} Let $D$ be as in Definition~\ref{D:geom}. Let $\bde=g\de$ for some positive function $g\in\cont^\alpha(bD)$, $\alpha\in(0,1)$, and $\de>0$. There exist $b_{D}, B_{g,D}, t_{g,D}>0$ such that for all $w\in bD$ and $t<t_{g,D}$,
	\bes
		\lambda\left(TC(w;\bde(w))\setminus\bigcup_{\zt\in bD\setminus S(w;t\bde(w))}TC(\zt;\bde(\zt))\right)
			\geq b_D\left(t^2\bde(w)\right)^{d+1},\quad 
				 \de<B_{g,D}t^{4/\alpha}.
	\ees
\end{Prop}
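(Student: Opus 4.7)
The plan is to construct a specific subregion $A \subset TC(w;\bde(w))$ which (i) has volume at least a constant multiple of $(t^2\bde(w))^{d+1}$, and (ii) cannot be hit by any tubular cut $TC(\zt;\bde(\zt))$ with $\zt$ outside $S(w;t\bde(w))$. A natural candidate is a ``deep shell'' above a cap thin in the quasimetric $\mathtt d$:
\[
A := \bigl\{\zt_z+r\hat\eta(\zt_z):\,\zt_z\in S(w;ct^2\bde(w)/2),\ (1-ct^2)\bde(w)\le r\le r_w(\zt_z;\bde(w))\bigr\},
\]
where $c = c_D > 0$ is a small constant depending only on the quasimetric constant $\mathfrak q_D$ of $\mathtt d$.

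The first thing I would check is that $A$ has the right size. For $\zt_z \in S(w;ct^2\bde(w)/2)$, the lower bound in \eqref{E:depthest} gives $r_w(\zt_z;\bde(w)) \ge \bde(w)-|L(\zt_z,w)| \ge (1-ct^2/2)\bde(w)$, so the $r$-interval is nonempty of length at least $ct^2\bde(w)/2$ and $A \subset TC(w;\bde(w))$. The map $(\zt_z,r)\mapsto\zt_z+r\hat\eta(\zt_z)$ has Jacobian $1+O(\de)$ on this range, so combining with the cap area estimate \eqref{eq:cap} one gets
\[
\lambda(A)\;\gtrsim_D\;\sigma\bigl(S(w;ct^2\bde(w)/2)\bigr)\cdot ct^2\bde(w)/2\;\gtrsim_D\;(t^2\bde(w))^{d+1},
\]
which gives the required lower bound with $b_D$ depending only on $c_D$ and the domain.

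The heart of the argument is disjointness: fix $z=\zt_z+r\hat\eta(\zt_z)\in A$ and $\zt$ with $\mathtt d(\zt,w)>\sqrt{t\bde(w)}$, and suppose for contradiction $z\in TC(\zt;\bde(\zt))$. Since projections onto $bD$ are unique for small $\de$, this forces $\zt_z\in S(\zt;\bde(\zt))$ and $r\le r_\zt(\zt_z;\bde(\zt))$. The quasi-triangle inequality and quasi-symmetry of $\mathtt d$ (Lemma~\ref{L:quasimetric}) applied to $w,\zt_z,\zt$ together with $\mathtt d(\zt_z,w)\le t\sqrt{c\bde(w)/2}$ and $\mathtt d(\zt_z,\zt)\le\sqrt{\bde(\zt)}$ yield
\[
\sqrt{t\bde(w)}<\mathtt d(\zt,w)\le \mathfrak q_D^2\,\mathtt d(\zt_z,\zt)+\mathfrak q_Dt\sqrt{c\bde(w)/2},
\]
which for $t$ small gives $|L(\zt_z,\zt)|=\mathtt d(\zt_z,\zt)^2\ge t\bde(w)/(4\mathfrak q_D^4)$. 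In parallel, \eqref{E:sandwich} applied to both $w,\zt_z$ and $\zt_z,\zt$ forces $\|\zt-w\|\lesssim\sqrt\de$, so the $\alpha$-H\"older control on $g$ gives $|g(\zt)-g(w)|\lesssim\|g\|_\alpha\de^{\alpha/2}$. Under the hypothesis $\de<B_{g,D}t^{4/\alpha}$, with $B_{g,D}$ chosen small depending on $\|g\|_\alpha, \min g, \mathfrak q_D$, this yields $\bde(\zt)^2\le\bde(w)^2(1+t^2/(32\mathfrak q_D^8))$, hence by the upper bound in \eqref{E:depthest}
\[
r_\zt(\zt_z;\bde(\zt))\;\le\;\sqrt{\bde(\zt)^2-|L(\zt_z,\zt)|^2}+C_D\bde(\zt)^2\;\le\;\bde(w)\bigl(1-t^2/(64\mathfrak q_D^8)\bigr)+2C_D\bde(w)^2.
\]
Taking $c=1/(128\mathfrak q_D^8)$ and using $\bde(w)^2=g(w)^2\de^2\lesssim t^{4/\alpha+2}\ll t^2\bde(w)$ (valid since $4/\alpha-2>0$ for $\alpha<1$), the right-hand side is strictly less than $(1-ct^2)\bde(w)\le r$, the desired contradiction.

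The main obstacle is controlling $g(\zt)/g(w)$ for the $\zt$'s that could possibly cover points of $A$. A crude application of the quasimetric inequality to caps (in the spirit of Proposition~\ref{P:NN1}) only constrains such $\zt$ via $\mathtt d(\zt,w)\lesssim\sqrt{\bde(\zt)}$, which by \eqref{E:sandwich} gives $\|\zt-w\|\lesssim\sqrt\de$ but still allows $g(\zt)/g(w)-1$ to be of order $\|g\|_\alpha\de^{\alpha/2}/g(w)$. Unless this deviation is dominated by the available depth window $t^2$, the inequality $(1-ct^2)\bde(w)>r_\zt(\zt_z;\bde(\zt))$ simply fails. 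Demanding $\|g\|_\alpha\de^{\alpha/2}\lesssim t^2$ is exactly the assumption $\de<B_{g,D}t^{4/\alpha}$ in the statement, and this is precisely why the exponent $4/\alpha$ --- rather than something milder --- is forced by the argument.
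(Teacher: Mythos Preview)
Your argument is correct and follows essentially the same strategy as the paper: isolate a deep shell over a small cap $S(w;s\bde(w))$ with $s\sim t^2$, use the quasi-triangle inequality to force $|L(\zt_z,\zt)|\gtrsim t\bde(w)$ for any competitor $\zt\notin S(w;t\bde(w))$, combine this with the H\"older bound on $g$ (which is where $\de<B_{g,D}t^{4/\alpha}$ enters) and the upper depth estimate \eqref{E:depthest} to cap $r_\zt(\zt_z;\bde(\zt))$ strictly below the shell, and finally read off the volume. The only cosmetic differences are that the paper fixes explicit constants $s=t^2/(4\mathfrak q^8)$ and uses Roccaforte's tube formula for the final volume bound, whereas you take $s=ct^2/2$ and appeal directly to the near-unit Jacobian of the normal-flow map together with \eqref{eq:cap}; your exponent bookkeeping ``$\bde(w)^2\lesssim t^{4/\alpha+2}$'' is slightly garbled but the intended conclusion $\bde(w)\ll t^2$ follows since $4/\alpha>2$.
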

\begin{proof} Let $C_D$ and $\de_D$ be as in the proof of Proposition~\ref{P:NN1}. Observe that for any $w\in bD$,
	\bes
		\{\zt\in bD:TC(w;\bde(w))\cap TC(\zt;\bde(\zt))\neq \emptyset\}\subset\{\zt\in bD:C(w;||g||_\infty\de)\cap C(\zt;||g||_\infty\de)\neq \emptyset\},
	\ees
and the latter set is contained in $S(w;C_{g,D}\de)$, as shown in the proof of Proposition~\ref{P:NN1}. Thus, it suffices to work with $\zt\in S(w;C_{g,D}\de)\setminus S(w;t\bde(w))$ to establish the claim. 

We now bound from below $r_w(z;\bde(w))-r_\zt(z;\bde(\zt))$ for certain $z$ and $\zt$ near $w$. Paraphrasing \eqref{E:depthest} in terms of the quasimetric $\mathtt d$, we have that for $w\in bD$, $z\in S(w;\bde(w))$, and $\de<\de_{g,D}$,
	\be\label{E:depthest2}
			\bde(w)-\mathtt d(z,w)^2\leq r_w(z;\bde(w))\leq
			\sqrt{\bde(w)^2-\mathtt d(z,w)^4}+C_{g,D}\de^{2}.
	\ee
Fix $s,t>0$ such that $s<t<1$. Then, if $z\in S(w; s\bde(w))$ and $\zt\in bD\setminus S(w;t\bde(w))$, 
	\bes
		\sqrt{t\bde(w)}\leq \mathtt d(\zt,w)\leq 
		\mathfrak q^2\mathtt d(z,\zt)+\mathfrak q\mathtt d(z,w)
		\leq  \mathfrak q^2\mathtt d(z,\zt)+\mathfrak q^2\sqrt{s\bde(w)},
	\ees
where $\mathfrak q=\mathfrak q_D\geq 1$ is the quasi-metric constant in Lemma~\ref{L:quasimetric}. Thus, 
	\bes
	\mathtt d(z,\zt)\geq\left(\mathfrak q^{-2}\sqrt{t}-\sqrt{s}\right)
	\sqrt{\bde(w)}>0, \qquad \text{for}\ s<t/\mathfrak q^4.
	\ees 
Now, setting $s=t^2/4\mathfrak q^8$, it follows from \eqref{E:depthest2} and the lower bound on $\mathtt d(z,\zt)$ that for $\zt \in S(w;C_{g,D}\de)\setminus S(w;t\bde(w))$ and $z\in S(w;s\bde(w))\cap S(\zt;\bde(\zt))$, 
	\be\label{E:depthest1}
		r_w(z;\bde(w))\geq \bde(w) R(t)\qquad \text{and}\qquad	
		r_\zt(z;\bde(\zt))\leq \bde(w) r_{\zt}(t), 
	\ee
where $R(t)=1-t^2/4\mathfrak q^8$ and 
		$r_{\zt}(t)=\sqrt{\frac{g(\zt)^2}{g(w)^2}-\left(\frac{\sqrt{t}}{\mathfrak q^2}-\frac{t}{2\mathfrak q^{4}}\right)^4}+C_{g,D}\de$. 

 Now, by the assumption on $g$, we obtain that $|g(\zt)-g(w)|\leq C_{g,D}\de^{\frac{\alpha}{2}}$, for all $w\in bD$, $\zt\in S(w;C_{g,D}\de)$, and $\de<\de_{g,D}$. Thus, for $w\in bD,\zt\in S(w;C_{g,D}\de)$, and $\de<\de_{g,D}$,
	\bes 
		g(\zt)^2\leq g(w)^2+C_{g,D}\de^{\frac{\alpha}{2}}.
	\ees
Therefore,
	\bea\label{E:depthest3}
		r_\zt(t)&\leq& \sqrt{1+C_{g,D}\de^{\frac{\alpha}{2}}
		-\left(\frac{\sqrt{t}}{\mathfrak q^2}-\frac{t}{2\mathfrak q^{4}}\right)^4}
			+C_{g,D}\de\notag \\&\leq& 1
		-\frac{1}{2}\left(\frac{\sqrt{t}}{\mathfrak q^2}-\frac{t}{2\mathfrak q^{4}}\right)^4
			+C_{g,D}\de^{\frac{\alpha}{2}}=:r(t),
	\eea
where we have used that $\sqrt{1-b+a}\leq  1-\frac{b}{2}+a$, whenever $b<3/4$ and $a>0$. Now, for sufficiently small $t$,
	\beas
		R(t)-r(t)= \left(\frac{(\sqrt{t}-t/2\mathfrak q^2)^4}{2\mathfrak q^8}-\frac{t^2}{4\mathfrak q^8}-C_{g,D}\de^{\frac{\alpha}{2}}\right)			
		\geq \left(\frac{t^2}{8\mathfrak q^8}-C_{g,D}\de^{\frac{\alpha}{2}}\right).
	\eeas
Thus, there are $a_{D}, A_{g,D}, t_{g,D}>0$ so that $R(t)-r(t)\geq a_Dt^2\bde(w)$ for all $\de<A_{g,D}t^{4/\alpha}$, $t<t_{g,D}$. 

Now, let $\pi_{bD}(\xi)$ be the (uniquely determined) projection of $\xi$ onto $bD$ for $\xi$ sufficiently close to $bD$. Define	$T:=T_{R(t)\bde(w)}(S(w;s\bde(w))) \setminus T_{r(t)\bde(w) }(S(w;s\bde(w)))$,
where $T_*(\cdot)$ is as in Definition~\ref{D:geom}.  By \eqref{E:depthest1}, we have that 
\beas
	T_{R(t)\bde(w)}\big(S(w;s\bde(w))\big) &\subset& TC(w;\bde(w)),\\ 
	T_{r(t)\bde(w)}\big(S(w;s\bde(w))\big) &\supset& 
	\bigcup_{\zt\in bD\setminus S(w;t\bde(w))}\left\{\xi\in TC(\zt;\bde(\zt))
	:\pi_{bD}(\xi)\in S(w;s\bde(w))\right\}. 
\eeas
Thus,  $T  \subset TC(w;\bde(w))\setminus\bigcup_{\zt\in bD\setminus S(w;t\bde(w))}TC(\zt;\bde(\zt))$, and it suffices to produce a lower bound for $\lambda(T)$. We will use Roccaforte's formula, i.e., Theorem~\ref{thm:rf} for this.

From \eqref{eq:cap},  $\sigma(S(w;s\bde(w)))\geq \wt a_D(t^2\bde(w))^d$ for some $\wt a_D>0$.  Moreover, $|R(t)|, |r(t)|\leq 2$ for all $t < t_{g,D}$, and the functions $s_1,...,s_{2d-1}$ as in Theorem \ref{thm:rf} are bounded on $bD$.  Thus, by Theorem \ref{thm:rf} and the above estimates, there are constants $b_D,B_{g,D}>0$ so that 
				\beas
	\lambda(T)&=&\bde(w)(R(t)-r(t)) \int\limits_{S(w;s\de)} \left(1-\sum_{j=1}^{d-1}\frac{(-\bde(w))^{j+1}}{j+1}
		\frac{R(t)^{j+1}-r(t)^{j+1}}{R(t)-r(t)}s_j(z)\right)d\sigma(z)\\
		&\geq & b_D \bde(w)(R(t)-r(t))\sigma(S(w;s\bde(w))) 
			\geq  b_D(t^2\bde(w))^{d+1}
	\eeas
for all $\de<B_{g,D}t^{4/\alpha}$ and $t<t_{g,D}$. 
\end{proof}


\section{Proofs of limit theorems}
\label{s:proofsmain}
In this section,  we present the  proofs of the main results of this paper, namely those stated in Section~\ref{s:poissonrandpoly}. We first collect some useful observations regarding the notions introduced in Definition~\ref{D:geom}.
We use the notation introduced in Definition~\ref{D:geom}.

Let $D \subset\Cd$ be a strongly $\C$-convex $\cont^2$-domain as in Definition~\ref{D:geom}, $\varphi = \{w^1,\ldots,w^m\} \subset bD$ be a source set, and $\bde : bD \to (0,\infty)$ be a size function. Since $bD$ and $\{z\in\Cd:|L(z,w)|=\de\}$, $w\in bD$, $\de>0$, are smooth real hypersurfaces in $\Cd$, they have zero Lebesgue volume. Thus,
	\be\label{E:equalvol}
		\lambda(D\setminus P(\varphi;\bde))=\lambda\left(\overline D\setminus P(\varphi;\bde)\right)=\lambda\left(D\setminus\overline{P(\varphi;\bde)}\right).
	\ee
Note that
	\be\label{E:unioncuts}
		D\setminus \overline {P(\varphi;\bde)}=\bigcup_{j=1}^m\left\{z\in D:|L(z,w^j)|< \bde(w^j)\right\}= \bigcup_{j=1}^m C(w^j;\bde(w^j)).
	\ee
On the other hand, recalling that $\overline{C(w;\de)}\supset TC(w;\de)$, we have that 
	\be\label{E:uniontubcuts}
		\overline D\setminus P(\varphi;\bde)=\bigcup_{j=1}^m\left\{z\in\overline D:|L(z,w^j)|\leq \bde(w^j)\right\}\supset \bigcup_{j=1}^m TC(w^j;\bde(w^j)).
	\ee
Setting
	\bes
		C(\varphi;\bde):= \bigcup_{j=1}^m C(w^j;\bde(w^j))\qquad 
\text{and}\qquad
		TC(\varphi;\bde):= \bigcup_{j=1}^m TC(w^j;\bde(w^j)),
	\ees
we obtain from \eqref{E:equalvol}, \eqref{E:uniontubcuts}, and \eqref{E:unioncuts} that 
	\be\label{E:upperlowerbounds}
		\lambda\left(TC(\varphi;\bde)\right)\leq \lambda\left(D\setminus P(\varphi;\bde)\right)
		=\lambda\left(C(\varphi;\bde)\right)=\lambda\left(\overline{C(\varphi;\bde)}\right).
	\ee
In order to compute $\lambda\left(TC(\varphi;\bde)\right)$ using the tube formula in Theorem~\ref{thm:rf}, we express $TC(\varphi;\bde)$ as a $g$-tube of $bD$ for some non-negative $g$. For this, set $r^+(z)=\max\{r(z),0\}$, where $r(z):=\max_{1\leq j\leq m}r_{w^j}(z;\bde(w^j))$. Then, $
T_{r}=TC(\varphi;\bde)$ and $T_{r^+}=TC(\varphi;\bde)\cup bD$.
Thus, 
\be\label{E:tubevol}
\lambda(TC(\varphi;\bde))=\lambda(T_r)=\lambda(T_{r^+}).
\ee 
Finally, we note that \eqref{E:upperlowerbounds} helps estimate $\delta_V(P(\varphi ; \bde),D)$ in terms of volumes of cuts since 
\begin{eqnarray}
\label{e:delvvolp}
\delta_V(P(\varphi ; \bde),D)  -  \lambda(D \setminus P(\varphi ; \bde)) &=& \lambda(P(\varphi ; \bde) \cap D) \1[P(\varphi ; \bde) \not \subset D] \nonumber \\&\leq&  \lambda(D) \1[P(\varphi ; \bde) \not \subset D].
\end{eqnarray}

In all the proofs below, we first handle the case when the source set of the random polyhedron is $\eta_n$, a Poission point process. Subsequently, the changes required to tackle the case of the source set being $\beta_n$, a binomial point process, is discussed. For brevity, we set
	\bes
		\de_V(n):=\de_V\left(D,P(\varphi_n,\bde_n)\right),
	\ees
where $\varphi_n$ is either $\eta_n$ or $\beta_n$, depending on the context. We also denote the probability measure $f\sigma$ by $\sigma_f$.

\subsection{Theorem~\ref{thm:cippoi}: convergence in probability}
\label{s:proofcvgprob}

\begin{proof}[Proof of Theorem \ref{thm:cippoi}]
Let $\eta_n$ be a Poisson point process on $bD$ such that \eqref{eq:covdelta} holds, i.e.,
	\bes
		\mathbb P(P(\eta_n;\bde_n)\subset D)\rightarrow 1\ \text{as}\ n\rightarrow\infty. 
	\ees

\noindent {\em Step 1.  Approximating by tubular cuts}. First, we observe that by the triangle inequality 
\bea\label{e:trineq}
		\left|\frac{\delta_V(n)}{\delta_n}-\int_{bD}g(z)d\sigma(z) \right |
	\leq\left|\frac{\delta_V(n)-\lambda(C(\eta_n;\bde_n))}{\delta_n}\right|
+
\left|\frac{\lambda(C(\eta_n;\bde_n))-\lambda(TC(\eta_n;\bde_n))}{\delta_n}\right|
&&\notag \\
 +\left|\frac{\lambda(TC(\eta_n;\bde_n))}{\delta_n}-\int_{bD}g(z)d\sigma(z) \right|.&&
	\eea 
Now, from \eqref{E:upperlowerbounds} and \eqref{e:delvvolp},  we have that for any $\epsilon > 0$,
\begin{align*}
 \BP\left( \left|\frac{\delta_V(n) - \lambda(C(\eta_n;\bde_n))}{\delta_n} \right| > \epsilon \right)=
 \BP\left( \frac{\delta_V(n) - \lambda(C(\eta_n;\bde_n))}{\delta_n} > \epsilon \right)& \leq \BP\left(P(\eta_n;\bde_n) \not \subset D \right).
\end{align*}
Thus, by the containment assumption on $P(\eta_n;\bde_n)$,
\begin{equation}
\label{e:eqcvgprob0}
\left|\frac{\delta_V(n) - \lambda(C(\eta_n;\bde_n))}{\delta_n} \right| \stackrel{p}{\to} 0\ \text{as}\ n\rightarrow\infty.
\end{equation}
Next, invoking \eqref{E:upperlowerbounds} and \eqref{eq:approxcut}, we have that
\be\label{e:C-TC}
0\leq \lambda(C(\eta_n;\bde_n)) - \lambda(TC(\eta_n ; \bde_n)) \leq \sum_{i=1}^{|\eta_n|} \lambda\big(C(W^i ; \bde_n) \setminus TC(W^i ; \bde_n)\big) \leq C_D |\eta_n| \|\bde_n\|_{\infty}^{2d+1},  
\ee
for some $C_D>0$. Thus, taking expectation on both sides, we have that
\be
\label{e:volcutapproxcut}
\BE\left|\frac{\lambda(C(\eta_n;\bde_n)) - \lambda(TC(\eta_n ; \bde_n)) }{\delta_n}\right| \leq C_D \|g\|_{\infty}^{2d} n\delta_n^{2d}
=C_D\|g\|_{\infty}^{2d}\frac{(\log n)^{2}}{n} \to 0
\ee
as $n \to \infty$.  Thus, by Markov's inequality, \eqref{e:trineq}, \eqref{e:eqcvgprob0}, and \eqref{e:volcutapproxcut}, \eqref{e:cipdvpoi} follows if
\begin{equation}
\label{e:cipCAn}
\BE\left | \frac{ \lambda(TC(\eta_n ; \bde_n))}{\delta_n} - \int_{bD}g(z)\md \sigma(z) \right|  \rightarrow 0\ \text{as}\ n\rightarrow\infty. 
\end{equation}
This reduces to estimating the expectation of a depth function associated to $P(\eta_n;\bde_n)$. 
\medskip 

\noindent {\em Step 2. Replacing volume by depth.} For $z \in bD$, let  $r_n(z) := \max_{1\leq j\leq |\eta_n|}r_{W^j}(z ; \bde_n(W^j))$ and $r_n^+(z) = \max \{r_n(z), 0\}.$ Then, by \eqref{E:tubevol} the statement \eqref{e:cipCAn} is equivalent to 
\begin{equation}
\label{e:expfncvg}
\BE \left | \frac{ \lambda(T_{r^+_n})}{\delta_n} -  \int_{bD} g(z)  \md \sigma(z) \right | \to 0\ \text{as}\ n\rightarrow\infty. 
\end{equation}
Using the tube formula in Theorem \ref{thm:rf} we have that
\begin{equation}
\label{e:tubeformCAn}
\lambda(T_{r^+_n}) =  \sum_{j=0}^{d-1}\frac{(-1)^j}{j+1}\int_{bD} r^+_n(z)^{j+1}s_j(z)\, \md \sigma(z). 
\end{equation}
Now, recalling that $r_n^+(z)\leq r_\infty(z;\bde_n)$ for all $z\in bD$, and that $r_\infty(z;\bde_n) \leq C \delta_n$ (see \eqref{E:maxdepth}), we have that for any $j \geq 1$,
$$ \frac{1}{\delta_n}\left| \int_{bD} r^+_n(z)^{j+1}s_j(z)\md \sigma(z)\right| \leq  \int_{bD} \frac{r_{\infty}(z;\bde_n)^{j+1}}{\delta_n} |s_j(z)|\,\md \sigma(z)  \to  0\ \text{as}\ n\rightarrow\infty.
$$
Thus, since $s_0 \equiv 1$, to prove \eqref{e:expfncvg}, it suffices to show that
\begin{equation}
\label{e:intrncvg}
\BE \left | \frac{1}{\delta_n} \int_{bD} r^+_n(z) \,\md \sigma(z) - \int_{bD} g(z)  \md \sigma(z) \right |\to 0\ \text{as}\ n\rightarrow\infty. 
\end{equation}
Now, by \eqref{E:maxdepth}, $
 r^+_n(z) \leq r_\infty(z;\bde_n)\leq  g(z)\delta_n + C_D \parallel g \parallel_\alpha^2\delta_n^{1+\frac{\alpha}{2}}$ for some $C_D>0$. 
Therefore,
\begin{align*}
\left | \frac{1}{\delta_n} r^+_n(z) - g(z) \right | & \leq \left | \frac{1}{\delta_n} r^+_n(z) - g(z) -  C_D \parallel g \parallel_\alpha^2\delta_n^{\frac{\alpha}{2}}\right | + C_D \parallel g \parallel_\alpha^2\delta_n^{\frac{\alpha}{2}}
\\
&= g(z) -\frac{1}{\delta_n} r^+_n(z) + 2C_D \parallel g \parallel_\alpha^2\delta_n^{\frac{\alpha}{2}}.
\end{align*}
Thus, by the triangle inequality, Fubini's theorem, and the dominated convergence theorem, \eqref{e:intrncvg} follows if we show that for all $z \in bD$,
\begin{equation}
\label{e:rncvg}
{ {
 \lim_{n \to \infty} \frac{\BE[r^+_n(z)]}{\delta_n} = g(z).
 }}
\end{equation}
\medskip

\noindent{\em Step 3. Expectation of depth.} Fix $z \in bD$. Since $\{r_n(z) \geq s\} = \{r^+_n(z) \geq s\}$ for $s > 0$, we have that,
$$  \BE[r^+_n(z)]  = \int_0^{r_{\infty}(z;\bde_n)} \BP(r_n(z) \geq s) \md s.$$
Thus, since $r_\infty(z;\bde_n)\geq g(z)\de_n$, we have that
\begin{equation}
\label{eq:exprnzbds}
 0\leq \BE[r^+_n(z)] -  \int_0^{g(z)\delta_n} \BP(r_n(z) \geq s) \md s\leq (r_{\infty}(z;\bde_n) - g(z)\delta_n).
\end{equation}
By \eqref{E:maxdepth}, it suffices to prove \eqref{e:rncvg} with $\BE[r^+_n(z)]$ replaced by the integral in \eqref{eq:exprnzbds}.

Now, recall that $\{w\in bD:r_w(z;\bde(w)) \geq s\}  = G_s(z ; \bde)$, the $(\bde,s)$-visibility region at $z$. Thus,
\be\label{e:vis}
\BP\left(r_{W^i}(z;\bde(W^i)) < s\right)
	 = \BP(W^i \notin  G_s(z ; \bde)) = 1 -  \sigma_f(G_s(z ; \bde)),
\ee
where recall $\sigma_f = f \md \sigma$ is the probability distribution of the $W^i$'s. Therefore,
\beas
\BP(r_n(z) \geq s) 
= 1 -  \BP(r_n(z) < s) 
&=&1 - \BP\left(\bigcap_{i=1}^{|\eta_n|}\{r_{W^i}(z ; \bde_n(W^i)) < s\}\right) \\
&=&1 -   \BE\left[ \prod_{i=1}^{|\eta_n|}\BP\Big(r_{W^i}(z ; \bde_n(W^i)) < s\Big)\right], 
\eeas
where, in the last step, we have conditioned on  $|\eta_n|$ and used the independence of $|\eta_n|$ and $W^i$'s. Now, by \eqref{e:vis}, and the fact that $\BE[t^{|\eta_n|}] = e^{ -n (1 - t)}$ for $t  \in \R$, we have that
\bea
\int_0^{g(z)\delta_n} \BP(r_n(z) \geq s) \md s 
&=& \int_0^{g(z)\delta_n}  \left(1 -  \BE\Big[ (1 -  \sigma_f(G_s(z ; \bde_n)))^{|\eta_n|}\Big] \right) \md s \notag\\
&=& \int_0^{g(z)\delta_n} \big( 1 - e^{-n\sigma_f(G_s(z ; \bde_n))} \big) \md s  \notag\\
&=& g(z)\delta_n \int_0^1   \left( 1 - e^{-n\sigma_f(G_{tg(z)\delta_n}(z ; \bde_n))} \right) \md t,  \label{e:rnzintbd}
\eea
where, in the final step, we use the change of variable $s\mapsto tg(z)\delta_n$.
Now, setting $c_f := \inf_{w \in bD}f(w)$, we have that
$$ c_f \sigma(G_{tg(z)\delta_n}(z ; \bde_n)) \leq \sigma_f(G_{tg(z)\delta_n}(z ; \bde_n)) \leq \|f\|_{\infty} \sigma(G_{tg(z)\delta_n}(z ; \bde_n)).$$
Now, recall that \eqref{eq:inhomvis} grants constants $C_D,K_D>0$ such that for $n$ large and $t \leq 1 - C_D \sqrt{\delta_n}$,  

$$ |  \sigma(G_{tg(z)\delta_n}(z ; \bde_n))  - h_d(t)k_d(z)g(z)^d \delta_n^d| \leq K_D \delta_n^{d+\alpha/2}.$$
Therefore, since $\delta_n\rightarrow 0$ as $n\rightarrow\infty$, we obtain that for any $t< 1$,    
   \begin{equation} \label{eq:nsf1} n \sigma_f(G_{tg(z)\delta_n}(z ; \bde_n)) \to \infty\ \text{as}\ n \to \infty.
     \end{equation}
In other words, $1 - e^{-n\sigma_f(G_{tg(z)\delta_n}(z ; \bde_n))} \to 1$ as $n \to \infty$ for any $t< 1$. Thus, by \eqref{e:rnzintbd} and the dominated convergence theorem, we have that 
\be\label{eq:rneq}
\lim_{n \to \infty}\frac{1}{\delta_n}\int_0^{g(z)\delta_n} \BP(r_n(z) \geq s) \md s
=\lim_{n \to \infty}g(z) \int_0^1   \left( 1 - e^{-n\sigma_f(G_{tg(z)\delta_n}(z ; \bde_n))} \right) \md t 
= g(z).
\ee
Substituting \eqref{eq:rneq} into \eqref{eq:exprnzbds} yields \eqref{e:rncvg}.  This completes the proof in the case of $\eta_n$.  
\medskip

Now, assume that $\beta_n$ is a binomial point process satisfying (the analogue of) \eqref{eq:covdelta}. The proof of \eqref{e:cipdvpoi} for $\beta_n$ is identical to the above argument until \eqref{eq:exprnzbds} as one does not rely on the properties of Poisson point processes until that stage. For the rest of the proof, it suffices to show that
\be\label{e:limit}
\BP(r_n(z) \geq tg(z)\de_n)\rightarrow 1\ \text{as}\ n\rightarrow\infty
\ee
for any $t<1$, where $r_n(z)=\max_{1\leq j\leq n}r_{W^j}(z;\bde_n(W^j))$. Now,
\beas
\BP(r_n(z) < tg(z)\de_n) &= \BP( \cap_{i=1}^{n}\{r_{W^i}(z ; \bde_n(W^i)) < tg(z)\de_n\})
= \big(1 - \sigma_f(G_{tg(z)\de_n}(z;\bde_n)) \big)^n,
\eeas
which converges to $0$ as $n\rightarrow\infty$ for any $t<1$ by \eqref{eq:nsf1}. Thus, \eqref{e:limit} holds, and the proof is complete.
\end{proof}

\subsection{Theorem~\ref{t:expvarpoi}: $L^1$-convergence and variance bounds}

\label{s:proofexpvar}

\begin{proof}[Proof of  Theorem \ref{t:expvarpoi}] Let $\eta_n$ be a Poisson process on $bD$ satisfying \eqref{e:covexpvarpoi}, i.e., 
\bes
	 \BP(P(\eta_n; \bde_n) \not\subset  D) = o\left(n^{-\left(1+\frac{2}{d}+\epsilon\right)}\right)\ \text{as}\ n\rightarrow\infty,
\ees
for some $\eps>0$. Recall that $\delta_V(n)=\delta_V(P(\eta_n; \bde_n),D)$. \\

\noindent{\em Part 1.  $L^1$-Convergence.} We take expectations on both sides of the inequality \eqref{e:trineq}. The expectations of the second and third terms of the right-hand side of \eqref{e:trineq} converge to $0$ as $n\rightarrow\infty$ as proved in Section~\ref{s:proofcvgprob}; see \eqref{e:volcutapproxcut} and \eqref{e:cipCAn}. Moreover, by \eqref{e:delvvolp} and \eqref{e:covexpvarpoi},  we have that

$$ \BE \left|\frac{\delta_V(n) - \lambda(C(\eta_n;\bde_n))}{\delta_n} \right | \leq \lambda(D) \frac{\BP(P(\eta_n ; \bde_n) \not \subset D) }{\delta_n} =o\left(n^{-\left(1+\frac{1}{d}+\eps\right)}(\log n)^{-\frac{1}{d}}\right)
$$
as $n\to\infty$. Thus, \eqref{e:expasymptotpoi} follows.
 
\noindent{\em Part 2.  Variance bounds.}  The rest of the proof is devoted to establishing \eqref{e:varasymptotpoi}.  This will be done in three steps.  First,  we approximate the variance of $\delta_V(n)$ by that of $\lambda(TC(\eta_n;\bde_n))$,  the volume of tubular cuts.  This approximation is done by  assuming \eqref{e:varasymptotpoi} for $\lambda(TC(\eta_n;\bde_n))$.  Next,  we  derive variance upper bounds for the volume of tubular cuts.  Finally, we derive variance lower bounds for the volume of tubular cuts.  Thus,  we prove \eqref{e:varasymptotpoi} for $\lambda(TC(\eta_n;\bde_n))$ and complete the proof.
\medskip

\noindent{\em Step 1.  Approximating by tubular cuts.} First, we recall the standard variance-covariance inequality. Let $X,Y$ be random variables such that $\BE[X^2], \BE[Y^2] < \infty$.  Then
\begin{align}
| \BV[X] - \BV[Y] | & =  |\BV[X-Y] + 2\BC(X-Y,Y)| \nonumber \\
\label{eq:varcovbd} & \leq  \BV[X-Y]   +  2 \sqrt{ \BV[X-Y]  \BV[Y] }
\end{align}

Now, by \eqref{eq:varcovbd} and \eqref{e:delvvolp},
\begin{align}
& \left|\BV[\delta_V(n)] -  \BV[ \lambda(C(\eta_n;\bde_n))]\right|   \no \\
&\no\\
 & \leq \BV[\delta_V(n)- \lambda(C(\eta_n;\bde_n))]+  2 \sqrt{\BV[\lambda(C(\eta_n;\bde_n))]\BV[\delta_V(n)- \lambda(C(\eta_n;\bde_n))]} \no \\
 &\no\\
 &  \leq \BE[|\delta_V(n)- \lambda(C(\eta_n;\bde_n))|^2] + 2 \sqrt{\BV[ \lambda(C(\eta_n;\delta_n))]\BE[|\delta_V(n)- \lambda(C(\eta_n;\bde_n))|^2]} \no \\
 &\no\\
\label{e:vardvvolc} & \leq \lambda(D)^2 \BP(P(\eta_n ; \bde_n) \not \subset D)  +  2\lambda(D)\sqrt{\BV[ \lambda(C(\eta_n;\bde_n))] \BP(P(\eta_n ; \bde_n) \not \subset D)}.
 \end{align}

We will now complete the proof of \eqref{e:varasymptotpoi}  assuming  that there exist $C_1,C_2>0$, such that
\begin{equation}
\label{e:varasymptofnpoi}
C_1 \frac{1}{n^{(1+2/d)}(\log n)^{2(1+1/d)}} \leq \BV[\lambda(TC(\eta_n;\bde_n))] \leq C_2\frac{(\log n)^{2(1+1/d)}}{n^{(1+2/d)}}.
\end{equation}
We defer the proof  of \eqref{e:varasymptofnpoi} to Step 2 and Step 3 below. Since $\BV[X]\leq \BE[X^2]$, by \eqref{e:C-TC}, we have that
    \bes
        \BV[\lambda(C(\eta_n;\bde_n)) - \lambda(TC(\eta_n;\bde_n))]
        \leq Cn^2\de_{n}^{4d+2},
    \ees
for some $C=C_{g,D}$. Combining this with the lower bound in (\ref{e:varasymptofnpoi}) yields that  %
  \beas 
\frac{\BV[\lambda(C(\eta_n;\bde_n)) - \lambda(TC(\eta_n;\bde_n))]}{\BV[\lambda(TC(\eta_n;\bde_n))]} 
&\leq& Cn^2\de_{n}^{4d+2}\left({\frac{n\delta_n^{2(d+1)}}{(\log n)^{4(d+1)/d}}}\right)^{-1}
 \\
   &=& C\frac{(\log n)^{6+\frac{4}{d}}}{n}.
     \eeas
Thus, we have established
\begin{equation} \BV[\lambda(C(\eta_n;\bde_n)) - \lambda(TC(\eta_n;\bde_n))]  = o( \BV[\lambda(TC(\eta_n;\bde_n))]){\quad \text{as}\ n\rightarrow\infty}. \label{eq:lon}
\end{equation}
Now, \eqref{eq:lon} along with the variance-covariance bound \eqref{eq:varcovbd} implies that
\begin{equation}
\label{e:varfnvarfna}
\lim_{n \to \infty} \frac{\BV[\lambda(C(\eta_n;\bde_n))]}{\BV[\lambda(TC(\eta_n;\bde_n))]} = 1.
\end{equation}
Then, using assumption \eqref{e:covexpvarpoi} in \eqref{e:vardvvolc} and combining it with \eqref{e:varasymptofnpoi} and \eqref{e:varfnvarfna},  \eqref{e:varasymptotpoi} follows.
\medskip

\noindent{\em Step 2. Proof of upper bound for  $\BV[\lambda(TC(\eta_n;\bde_n))]$ in \eqref{e:varasymptofnpoi}.} We use the Poincar\'e inequality i.e.,  the upper bound in \eqref{e:variancebounds}.  Now, 
\begin{align*}
D_z\lambda(TC(\eta_n;\bde_n)) & =   \lambda(TC(\eta_n \cup \{z\};\bde_n) \setminus TC(\eta_n;\bde_n)) =  \lambda(TC(z;\bde_n(z)) \setminus TC(\eta_n;\bde_n)) .\nonumber \end{align*}
Thus, by  \eqref{eq:cut}, there is a constant $C>0$ (independent of $z$) such that%
\begin{align}
\label{e:1diffopbounds}  |D_z\lambda(TC(\eta_n;\bde_n))|^2  \leq \lambda(TC(z ; \bde_n(z)))^2 \leq \lambda(C(z ; \bde_n(z)))^2  \leq C\delta_n^{2(d+1)}.
\end{align}
Since $\eta_n$ has intensity measure $nf\sigma$,  we have from  \eqref{e:variancebounds} that
\begin{align*}
\BV[\lambda(TC(\eta_n;\bde_n))] & \leq C n \delta_n^{2(d+1)} \int_{bD}f(z) \md \sigma(z) \leq C n \delta_n^{2(d+1)}.
\end{align*}
The upper bound in \eqref{e:varasymptofnpoi} follows since $\delta_n=(\log n/n)^{1/d}$.
\medskip

\noindent{\em Step 3. Proof of lower bound for $\BV[\lambda(TC(\eta_n;\bde_n))]$ in \eqref{e:varasymptofnpoi}.}  We shall show that there is a $C >0 $ such that
\begin{equation}
\label{eq:varfnalbeasy}
 \BV[\lambda(TC(\eta_n;\bde_n))] \geq  C\frac{n\delta_n^{2(d+1)}}{(\log n)^{4(d+1)/d}}.
\end{equation}    
The lower bound in \eqref{e:varasymptofnpoi} will then follow since $\delta_n=(\log n/n)^{1/d}$.

We use the lower bound in \eqref{e:variancebounds}, for which we must estimate the appropritate difference operator.  Given any $s>0$, we have that 
\begin{align} 
\label{e:dzis}
D_z\lambda(TC(\eta_n;\bde_n)) &= \lambda(TC(\eta_n  \cup \{z\}; \bde_n)  \setminus TC(\eta_n ; \bde_n)) \nonumber\\
&\geq  \lambda(TC(\eta_n \cup \{z\}; \bde_n) \setminus TC(\eta_n; \bde_n))  \1[\eta_n \cap S(z,s\bde_n(z)) = \emptyset]. 
\end{align}
By Proposition \ref{P:NN2}, there exist constants $t,B,c>0$ (depending only on $g$ and $D$) such that for any $s < t$, if $\delta_n< Bs^{4/\alpha}$, $z \in bD$,  $\varphi=\{w^1,...,w^m\}\subset bD$ and $\varphi\cap S(z;s\bde_n(z)) = \emptyset$, then 
\begin{align*}
\lambda\big(TC(\phi \cup \{z\} ; \bde_n) \setminus TC(\phi;\bde_n)\big) & \geq  \lambda \left( TC(z;\bde_n(z)) \setminus \bigcup_{w \in bD \setminus S(z;s\bde_n(z))}TC(w;\bde_n(w)) \right) \\
& \geq cs^{2(d+1)} \delta_n^{d+1}.
\end{align*}
Applying this to \eqref{e:dzis}, we obtain that for any $s < t$, if $\delta_n< Bs^{4/\alpha}$, then
\be\label{e:lowdiff}
D_z\lambda(TC(\eta_n;\bde_n)) \geq cs^{2(d+1)}\delta_n^{d+1} 
 \1[\eta_n \cap S(z,s\bde_n(z)) = \emptyset]. 
\ee
 {Now,}
\begin{align*}
 \BP(\eta_n \cap S(z,s\bde_n(z)) = \emptyset)^2 &=  \exp \{-2n \sigma_f(S(z,s\bde_n(z))) \}  \\
&\geq  \exp \{-cn \sigma(S(z,s\bde_n(z))) \}\qquad  \mbox{($f$ is bounded above)} \\
&\geq   \exp \{-cn s^{d} \bde_n(z)^d \} 
\qquad \qquad \quad \mbox{(by \eqref{eq:cap})}    \\
&\geq  n^{-cg(z)^ds^{d}} 
\qquad \qquad\qquad \qquad \quad\,(\delta_n=(\log n/n)^{1/d})\\
&\geq n^{-\wt cs^d},
\end{align*}
where $\wt c=c||g||_\infty<\infty$. Thus, by \eqref{e:lowdiff}, for some $c_1>0$,
\beas
\int_{bD} \left(\BE[D_z\lambda(TC(\eta_n;\bde_n))]\right)^2 \md \sigma_f(z) 
&\geq&  c s^{4(d+1)}n\delta_n^{2(d+1)}  \int_{bD} \BP(\eta_n \cap S(z,s\bde_n(z)) = \emptyset)^2   \md \sigma_f(z)\\
&\geq& c_1s^{4(d+1)}n^{1-cs^{d}}\delta_n^{2(d+1)},
\eeas
if $\delta_n< Bs^{4/\alpha}$ and $n$ is sufficiently large. 
Thus, invoking the lower bound in \eqref{e:variancebounds}, 
\begin{equation}
\label{eq:varfnalbeasy1}
 \BV[\lambda(TC(\eta_n;\bde_n))] \geq c_1s^{4(d+1)}n^{1-cs^{d}}\delta_n^{2(d+1)},
\end{equation}
if $\delta_n< Bs^{4/\alpha}$ and $n$ is sufficiently large. Since $\de_n=(\frac{\log n}{n})^{\frac{1}{d}}$, \eqref{eq:varfnalbeasy1} holds when $s= \left( \frac{4(d+1)}{cd\log(n)}\right)^{\frac{1}{d}}$ and $n$ is sufficiently large, and yields \eqref{eq:varfnalbeasy}. Though we have not used the fact, it can be verified that this choice of $s$ is the optimal one.   \\

Now, assume that $\beta_n$ is a binomial point process satisfying (the analogue of) \eqref{e:covexpvarpoi}.  The proof of $L^1$-convergence, i.e., \eqref{e:expasymptotpoi} for $\beta_n$ follows as in Step 1 above. The upper bound on the variance in \eqref{e:varasymptotpoi} can be derived as done in Step 2 above with the following modification. Instead of the Poincar\'e inequality, apply the Efron--Stein inequality \eqref{e:ESsimple} to $F(\cdot)=\lambda(C(\cdot,\bde_n))$, and use the fact that  \eqref{e:1diffopbounds} holds for $\lambda(C(\beta_n,\bde_n))$ as well. 
\end{proof}

\subsection{Theorem \ref{t:cltpoi}: central limit theorem}
\label{s:proofclt}

\begin{proof}[Proof of  Theorem \ref{t:cltpoi}] The strategy of the proof is to reduce \eqref{e:normapproxdvpoi} to a central limit theorem for $\lambda(C(\varphi_n;\delta_n))$, where $\varphi_n$ is either $\eta_n$ or $\beta_n$. The latter is then handled using second-order Poincar\'e inequalities (Theorem \ref{t:normapproxpoi} in the case of $\eta_n$, and Theorem \ref{t:normapproxbin} in the case of $\beta_n$).
\smallskip

\noindent{\em Case I. The source set is $\eta_n$.} Let $\eta_n$ be a Poisson process on $bD$ satisfying \eqref{e:covexpvarpoi}. Recall that $\delta_V(n)=\delta_V(P(\eta_n ; \bde_n),D)$, and the Wasserstein distance is as in \eqref{e:defndW}. We will often invoke the following simple observation.  
\be\label{e:wass}
	d_W(X,Y)\leq \sup_{h \in \mathrm{Lip}(1)} \BE|h(X)-h(Y)|\leq \BE|X-Y|.
\ee
By the triangle inequality,  we have that
\bes      \label{e:delvceta}
d_W\Big(\frac{\delta_V(n) - \BE[\delta_V(n)]}{\sqrt{\BV[\delta_V(n)]}}, Z \Big)\leq \text{I}+\text{II}+\text{III},
\ees
where
\beas
\text{I}&:=&d_W\left(\frac{\delta_V(n) - \BE[\delta_V(n)]}{\sqrt{\BV[\delta_V(n)]}}, \frac{\lambda(C(\eta_n ; \bde_n)) - \BE[\lambda(C(\eta_n ; 
\bde_n))]}{\sqrt{\BV[\delta_V(n)]}}\right),\\\text{II}&:=& d_W\left(\frac{\lambda(C(\eta_n ; \bde_n)) - \BE[\lambda(C(\eta_n ; \bde_n))]}{\sqrt{{\BV[\delta_V(n)]}}} , \frac{\lambda(C(\eta_n ; \bde_n)) - \BE[\lambda(C(\eta_n ; \bde_n))]}{\sqrt{\BV[\lambda(C(\eta_n ; \bde_n))]}}\right),\\
\text{III} &:=& d_W\left(\frac{\lambda(C(\eta_n ; \bde_n)) - \BE[\lambda(C(\eta_n ; \bde_n))]}{\sqrt{\BV[\lambda(C(\eta_n ; \bde_n))]}},Z\right).
\eeas

\noindent {\em Part 1. Upper bound for I}. Using\eqref{e:wass}, we have that
\beas 
\text{I}&\leq & \BE\left|
	\frac{\delta_V(n) - \BE[\delta_V(n)]}{\sqrt{\BV[\delta_V(n)]}}
	-\frac{\lambda(C(\eta_n ; \bde_n))-\BE[\lambda(C(\eta_n ; \bde_n))]}
		{\sqrt{\BV[\delta_V(n)]}}\right|\\
		&\leq& 2\lambda(D) \frac{\BP(P(\eta_n ; \bde_n) \not \subset D)}
			{\sqrt{\BV[\delta_V(n)]}}
 \qquad\qquad\qquad\mbox{(by \eqref{e:delvvolp} and \eqref{E:upperlowerbounds})}\\
&\leq& C\frac{n^{-(1+2/d +\epsilon)}}{\sqrt{n^{-(1+2/d)}(\log n)^{-2(1+1/d)}}}
\qquad\qquad\,\, \mbox{(by \eqref{e:covexpvarpoi} and \eqref{e:varasymptotpoi})}\\ 
&=&  C \frac{(\log n)^{(1+1/d)}}{n^{\epsilon + \frac{1}{2} + \frac{1}{d}}}.
\label{eq:bI}
\eeas
\medskip

\noindent {\em Part 2. Upper bound for II}. Denoting $\dfrac{\lambda(C(\eta_n ; \bde_n)) - \BE[\lambda(C(\eta_n ; \bde_n))]}{\sqrt{\BV[\lambda(C(\eta_n ; \bde_n))]}}$ by $X_n$, we have that 
\bea
\text{II}&=&d_W\left(\sqrt{\frac{\BV[\lambda(C(\eta_n ; \bde_n))]}{\BV[\delta_V(n)]}}\,X_n,X_n\right) \notag \\
&\leq & \left |\sqrt{{\frac{\BV[\lambda(C(\eta_n ; \bde_n))]}{\BV[\delta_V(n)]}}} - 1 \right | \BE|X_n|\notag \\
&=& \left | {\frac{\BV[\lambda(C(\eta_n ; \bde_n))]}{\BV[\delta_V(n)]}} - 1 \right | \frac{1}{\left| \sqrt{{\frac{\BV[\lambda(C(\eta_n ; \bde_n))]}{\BV[\delta_V(n)]}}} + 1 \right |}\BE|X_n|\notag \\
&\leq & \left | {\frac{\BV[\lambda(C(\eta_n ; \bde_n))]}{\BV[\delta_V(n)]}} - 1 \right |,\label{eq:inII}
\eea
where, in the last step, we have used the Cauchy--Schwartz inequality to observe that,
\bes
\BE|X_n|=
\BE \left |\frac{\lambda(C(\eta_n ; \bde_n)) - \BE[\lambda(C(\eta_n ; \bde_n))]}{\sqrt{\BV[\lambda(C(\eta_n ; \bde_n))]}} \right | 
\leq \sqrt{ \BE \left[  \frac{\lambda(C(\eta_n ; \bde_n)) - \BE[\lambda(C(\eta_n ; \bde_n))]}{\sqrt{\BV[\lambda(C(\eta_n ; \bde_n))]}} \right ]^2} = 1.
\ees
Using the variance-covariance bound \eqref{eq:varcovbd} as done in \eqref{e:vardvvolc}, we have that     
\bea
 &&\left | {\frac{\BV[\lambda(C(\eta_n ; \bde_n))]}{\BV[\delta_V(n)]}} - 1\right | = \frac{|  \BV[ \lambda(C(\eta_n;\bde_n))] - \BV[\delta_V(n)]|}{\BV[\delta_V(n)]} \notag \\
  &&\qquad\quad\leq  \frac{\lambda(D)^2\BP(P(\eta_n ; \bde_n) \not \subset D)  +  2\lambda(D)\sqrt{\BV[ \delta_V(n)]\BP(P(\eta_n ; \bde_n) \not \subset D)} }{\BV[ \delta_V(n)]}. \label{e:varC/vardV}
  \eea
Combining this with \eqref{eq:inII}, and invoking the bounds in \eqref{e:covexpvarpoi} and \eqref{e:varasymptotpoi}, we have that 
 
 \beas
  \text{II} &\leq & C\left({n^{-(1+2/d +\epsilon)}}{n^{(1+\frac{2}{d})} (\log n)^{2(1+ \frac{1}{d})}} +  \sqrt{{n^{-(1+2/d +\epsilon)}}{n^{(1+\frac{2}{d})} (\log n)^{2(1+ \frac{1}{d})}}} \,\right)\\
  &\leq & C\, \frac{(\log n)^{(1+ \frac{1}{d})}}{n^{\frac{\epsilon}{2}}}.
  \eeas
\medskip

\noindent {\em Part 3. Upper bound for III}. We use Theorem \ref{t:normapproxpoi}, for which must derive bounds for $T_i(\lambda(C(\eta_n ; \bde_n)))$, $i=1,2,3$, as defined in Theorem \ref{t:normapproxpoi}.  Note that in the case of $\eta_n$, the intensity measure is $\mu= n\sigma_f = nf \sigma$.  %

To compute $T_1(\lambda(C(\eta_n ; \bde_n)))$ and $T_2(\lambda(C(\eta_n ; \bde_n)))$,  we need to bound the second-difference operator of $\lambda(C(\eta_n ; \bde_n))$. Note that for $z,w \in bD$,
\bea
\quad &&\Big| D^2_{z,w}(\lambda(C(\eta_n ; \bde_n))) \Big|\notag\\  
\quad &&=   \Big| \lambda(C(\eta_n \cup \{z,w\} ; \bde_n)) -  \lambda(C(\eta_n \cup \{w\} ; \bde_n))-  \lambda(C(\eta_n \cup \{z\} ; \bde_n))  +  \lambda(C(\eta_n ; \bde_n)) \Big| \notag \\
\quad	&&\leq   \lambda(C(z;\bde_n(z)) \cap C(w;\bde_n(w))) 
	\qquad\qquad\qquad\,  \mbox{(by the inclusion-exclusion principle)} \notag \\
\quad	&&\leq C\delta_n^{d+1}\1[C(z;\bde_n(z)) \cap C(w;\bde_n(w)) \neq \emptyset] 
	\qquad  \mbox{(by \eqref{eq:cut})} \notag \\
\quad &&\leq C \delta_n^{d+1}\1[C(z;\|g\|_{\infty}\delta_n) \cap C(w;\|g\|_{\infty}\delta_n) \neq \emptyset]. \label{e:2diffopbounds} 
\eea

Thus, 
\begin{align*}
A(z^1,z^2,z^3):=\, & \BE\left[\big(D^2_{z^1,z^2}\lambda(C(\eta_n ; \bde_n))\big)^2\big( D^2_{z^2,z^3}\lambda(C(\eta_n ; \bde_n))\big)^2\right]  \\
& \leq \delta_n^{4(d+1)}\1\left[z^1\in N(z^2;||g||_\infty\de_n)\right]\1\left[z^3\in N(z^2;||g||_\infty\de_n)\right].  
\end{align*} 
where
	$N(z;\delta):= \{w : C(w;\delta) \cap C(z;\delta) \neq \emptyset\}.$ Also, from \eqref{e:1diffopbounds}, we have that
\bes
B(z^1,z^2):=\BE[(D^1_{z^1}\lambda(C(\eta_n ; \bde_n)))^2(D^1_{z^2}\lambda(C(\eta_n ; \bde_n)))^2] \leq  C \delta_n^{4(d+1)}.
\ees
Therefore, by Proposition~\ref{P:NN1} and the fact that $||g||_\infty<\infty$,
\beas
 T_1(\lambda(C(\eta_n ; \bde_n)))^2 &=&
\int_{(bD)^3}A(z^1,z^2,z^3)^{1/2}B(z^1,z^2)^{1/2}d\mu^3(z^1,z^2,z^3)
\\
&\leq& Cn^3  \delta_n^{4(d+1)} 
\int_{bD} \sigma\left(N(z;||g||_\infty\de_n)\right)^2 \md \sigma_f(z) \\
&\leq&  Cn^3 \delta_n^{6d+4}.
\eeas
In a similar fashion, we have that
\beas
T_2(\lambda(C(\eta_n ; \bde_n)))^2 &=&
\int_{(bD)^3}A(z^1,z^2,z^3)d\mu^3(z^1,z^2,z^3)
\\ &\leq& Cn^3 \delta_n^{4(d+1)} \int \sigma(N(z;\|g\|_{\infty}\delta_n))^2 \md \sigma_f(z) \leq  Cn^3  \delta_n^{6d+4}. 
\eeas
Finally, an upper bound for $T_3(\lambda(C(\eta_n ; \bde_n)))$ follows easily from \eqref{e:1diffopbounds}:
\begin{equation}
\label{e:t3fn}
T_3(\lambda(C(\eta_n ; \bde_n))) \leq Cn \delta_n^{3(d+1)}.
\end{equation}
Now, using that $\de_n=((\log n)/n)^{1/d}$, and noting that the lower bound in \eqref{e:varasymptofnpoi} holds for $\BV[\lambda(C(\eta_n ; \bde_n))]$ because of \eqref{e:varfnvarfna},  we have that

\bea
\label{e:t1fnfinal}
\quad \frac{T_1(\lambda(C(\eta_n ; \bde_n)))}{{{\BV[\lambda(C(\eta_n ; \bde_n))]}}},
\frac{T_2(\lambda(C(\eta_n ; \bde_n)))}{{ {\BV[\lambda(C(\eta_n ; \bde_n))]}}} 
 \leq C \frac{\left(\frac{ (\log n)^{6 + \frac{4}{d}}}{n^{3+ \frac{4}{d}}}\right)^\frac{1}{2}}{n^{-1-\frac{2}{d}}{{(\log n)^{-2-\frac{2}{d}}}}} &\leq&  C \frac{(\log n)^{5+ \frac{4}{d}}}{n^{1/2}},\\
\label{e:t3fnfinal}
\frac{T_3(\lambda(C(\eta_n ; \bde_n)))}{{ {\BV[\lambda(C(\eta_n ; \bde_n))]}}^{3/2}} 
\leq C \frac{\frac{(\log n)^{3+ \frac{3}{d}}}{n^{2+\frac{3}{d}}}}{\left(n^{-1-\frac{2}{d}}{{(\log n)^{-2-\frac{2}{d}}}}\right)^\frac{3}{2}} &\leq& C\frac{(\log n)^{6+6/d}}{n^{1/2}}.
\eea
We now apply \eqref{e:2ndPoincare} from Theorem \ref{t:normapproxpoi} to $F(\cdot)=\lambda(C(\cdot;\bde_n))-\BE(\lambda(C(\cdot;\bde_n)))$, and substitute the bounds \eqref{e:t1fnfinal} and \eqref{e:t3fnfinal} to obtain that
\beas
 \text{III}
&\leq& C \left[{{\frac{(\log n)^{5+\frac{4}{d}}}{n^{1/2}}}} + {{ \frac{(\log n)^{5+\frac{4}{d}}}{n^{1/2}}}} +{{\frac{(\log n)^{6+\frac{6}{d}}}{n^{1/2}}}} \right] \\
 &\leq& C\, \frac{(\log n)^{6+ 6/d}}{n^{1/2}}.
\eeas
Finally, combining the bounds for I, II, and III, we have that
\begin{align} 
d_W\left(\frac{\delta_V(n)-\BE[\delta_V(n)]}{\sqrt{\BV[\delta_V(n)]}}, Z\right) &\leq  C \left(\frac{(\log n)^{(1+1/d)}}{n^{\epsilon + \frac{1}{2} + \frac{1}{d}}}+\frac{(\log n)^{(1+1/d)}}{n^{\frac{\epsilon}{2}}} + \frac{(\log n)^{6+6/d}}{n^{1/2}}\right).
\end{align}
Thus, \eqref{e:normapproxdvpoi} holds for the Poisson process.
\medskip

\noindent{\em Case II. The source set is $\beta_n$.} Let $\beta_n$ be a binomial process on $bD$ satisfying \eqref{e:covexpvarpoi}, and let $\delta_V(n)=\delta_V(P(\beta_n ; \bde_n),D)$. Once again, by the triangle inequality, we may write
\bes
d_W\left(\frac{\delta_V(n) - \BE[\delta_V(n)]}{\sqrt{\BV[\delta_V(n)]}}, Z\right)
\leq \text{I}_B+\text{II}_B+\text{III}_B,
\ees
where
\beas
  \text{I}_B &:=&d_W\left(\frac{\delta_V(n) - \BE[\delta_V(n)]}{\sqrt{\BV[\delta_V(n)]}}, \frac{\lambda(C(\beta_n ; \bde_n)) - \BE[\lambda(C(\beta_n ; \bde_n))]}{\sqrt{\BV[\delta_V(n)]}}\right),\\
   \text{II}_B &:=& d_W\left(\left(\frac{\lambda(C(\beta_n ; \bde_n)) - \BE[\lambda(C(\beta_n ; \bde_n))]}{\sqrt{{\BV[\delta_V(n)]}}} \right), \frac{\lambda(C(\beta_n ; \bde_n)) - \BE[\lambda(C(\beta_n ; \bde_n))]}{\sqrt{\BV[\lambda(C(\beta_n ; \bde_n))]}}\right),\\
 \text{III}_B &:=&d_W\left(\frac{\lambda(C(\beta_n ; \bde_n)) - \BE[\lambda(C(\beta_n ; \bde_n))]}{\sqrt{\BV[\lambda(C(\beta_n ; \bde_n))]}},Z\right). 
\eeas
Owing to the assumed lower bound for $\BV[\delta_V(n)]$, the bound for I$_B$ is the same as that for I, namely, for some $C>0$,
\beas 
\text{I}_B &\leq & C \frac{(\log n)^{(1+1/d)}}{n^{\epsilon + \frac{1}{2} + \frac{1}{d}}}.
\eeas
{Next, since \eqref{e:varC/vardV} holds for $\beta_n$ as well, the assumed lower bound for $\BV[\delta_V(n)]$ yields the same bound for $\text{II}_B$ as that for $\text{II}$, i.e., for some $C>0$,
\bes
\text{II}_B \leq C \frac{(\log n)^{(1+1/d)} }{n^{	\frac{\epsilon}{2}}}.
\ees}

To obtain a bound on $\text{III}_B$, we use Theorem \ref{t:normapproxbin}. First, note that%
 \be\label{e:boundD1}
	|D_j\lambda(C(\bfZ;\bde_n))|  \leq \lambda(C(W^j;\bde_n(W^j))) \leq C \delta_n^{d+1},\quad {Z\in Rec(\bfW)},\ j=1,2.
\ee
From the inclusion-exclusion principle, we have that 
\be\label{e:D12}
D_{1,2}(\lambda(C(\bfW;\bde_n))) \neq 0\ \text{implies that}\ C(W^1;\bde_n(W^1)) \cap C(W^2;\bde_n(W^2)) \neq \emptyset. 
\ee
Thus, since $\bfY\overset{d}{=}\bfW$ and \eqref{e:boundD1} holds, we have that
\bea
T'_1\Big(\lambda(C(\bfW;\bde_n))\Big) 
&\leq& C \delta_n^{4(d+1)} \BP\Big(C(W^1;\|g\|_{\infty}\delta_n) \cap C(W^2;\|g\|_{\infty}\delta_n) \neq \emptyset \Big)\notag \\
&\leq&   C \delta_n^{4(d+1)}\BE[\sigma_f(N(W^1;\|g\|_{\infty}\delta_n))] \leq C \delta_n^{5d+4}, \label{e:t1bin}
\eea
where, in the final step, we have used Proposition~\ref{P:NN1} to estimate $\sigma\left(N(W^1;\|g\|_{\infty}\delta_n)\right)$. Invoking \eqref{e:D12} again, we have the following upper bound for $T'_2\Big(\lambda(C(\bfW;\bde_n))\Big)$:
\bea
&&C\delta_n^{4(d+1)}\sup_{\bfY,\bfZ \in Rec(\bfW)}\!\!\BP\Big(C(Y^1;\bde_n(Y^1))\cap C(Y^2;\bde_n(Y^2)) \neq \emptyset,\notag\\ 
&&\qquad \qquad\qquad\qquad\qquad C(Z^1;\bde_n(Z^1))\cap C(Z^3;\bde_n(Z^3)) \neq \emptyset \Big).\label{e:bd1T2}
\eea
To further bound \eqref{e:bd1T2}, note that by the definition of $\bfY$ and $\bfZ$, $Y^1,Y^2, Z^3$ are independent and, either $Z^1$ is independent of all of them, or $Z^1 = Y^1$. Thus,
\bea
&&  \qquad  T'_2\Big(\lambda(C(\bfW;\bde_n))\Big)\notag\\
&&   \qquad \leq C \delta_n^{4(d+1)}\max \left\lbrace \BP\Big(C(W^1;\bde_n(W^1)) \cap C(W^2;\bde_n(W^2)) \neq \emptyset \Big)^2,   \right.  \notag\\ 
&& \qquad  \left.  \BP\Big(C(W^1;\bde_n(W^1)) \cap C(W^2;\bde_n(W^2)) \neq \emptyset,   C(W^1;\bde_n(W^1)) \cap C(W^3;\bde_n(W^3)) \neq \emptyset \Big)  \right\rbrace \notag\\
&&  \qquad  \leq  C \delta_n^{4(d+1)} \max \left\{{\BE\left[\sigma_f(N(W^1;\|g\|_{\infty}\delta_n))\right]^2},  \BE[\sigma_f(N(W^1;\|g\|_{\infty}\delta_n))^2] \right\} \notag\\
&& \qquad  \leq C \delta_n^{6d+4}. \label{e:t2bin}
\eea
Next, it is immediate from \eqref{e:boundD1} that
 \begin{equation}
 \label{e:t3t4bin}
 T'_3\Big(\lambda(C(\bfW;\bde_n))\Big)  \leq C \delta_n^{4(d+1)}  \quad \text{and}\quad  T'_4\Big(\lambda(C(\bfW;\bde_n))\Big)  \leq C \delta_n^{3(d+1)}. 
 \end{equation}
{Lastly, since \eqref{e:varC/vardV} holds for $\beta_n$ as well, the assumed lower bound for $\BV[\delta_V(P(\beta_n; \bde_n),D)]$ is also a lower bound for $\BV[\lambda(C(\beta_n; \bde_n),D)]$.} Now, we apply \eqref{e:2ndPBinom} in Theorem~\ref{t:normapproxbin} to $F(\cdot)=\lambda(C(\cdot;\bde_n))-\BE(\lambda(C(\cdot;\bde_n)))$, and substitute the bounds \eqref{e:t1bin}, \eqref{e:t2bin} and  \eqref{e:t3t4bin}, and the lower bound for {$\BV[\lambda(C(\beta_n; \bde_n),D)]$} to obtain that
\begin{align*}
\text{III}_B
& 
\leq C\,\frac{\sqrt{n^2 \delta_n^{5d+4}} + \sqrt{n^\frac{3}{2}\delta_n^{6d+4} } + \sqrt{n\delta_n^{4(d+1)}}}{ n^{-(1+2/d)}{ {(\log n)^{-2(1+1/d)}}}} 
+ \frac{Cn \delta_n^{3(d+1)}}{\left(n^{-(1+2/d)}{ {(\log n)^{-2(1+1/d)}}}\right)^\frac{3}{2}}\\
&\leq C\frac{(\log n)^{6 + \frac{6}{d}}}{ \sqrt{n}}.
\end{align*}

Finally, combining the bounds for I$_B$, II$_B$, and III$_B$, we have that
$$  d_W\left(\frac{\delta_V(n) - \BE[\delta_V(n)]}{\sqrt{\BV[\delta_V(n)]}}, Z \right) \leq C \left({{ \frac{(\log n)^{(1+1/d)}}{n^{\epsilon + \frac{1}{2} + \frac{1}{d}}} + \frac{(\log n)^{(1+1/d)}}{n^{\frac{\epsilon}{2}}}+    \frac{(\log n)^{6 + 6/d}}{n^{1/2}}}}\right).$$
Thus, \eqref{e:normapproxdvpoi} holds for $\beta_n$ as well, and the proof is complete.   
\end{proof}

\subsection{Proposition~\ref{lem:rdelta}: conditions for coverage}
\label{s:proofcov}
\begin{proof}[Proof of Proposition \ref{lem:rdelta}] The strategy of the proof is to use Lemma~\ref{L:inc=cov} to convert containment probabilities into converage probabilities. We handle the latter by dividing $bD$ suitably and using a classical balls into bins approach for coverage problems. 
\medskip

 \noindent {\em Proof of $(a)$.} Let $\eta_n$ be a Poisson process on $bD$. We claim that it suffices to show the existence of a $c_D>0$ such that
\begin{equation}
\label{e:noncovcondn}
\lim_{n \to \infty} \BP\Big( S(z;\eps) \setminus S(\eta_n; c_D\delta_n) \neq \emptyset \Big) = 1,\qquad \text{{for some}}\  z \in bD, \eps>0.
\end{equation}
To see this, assume \eqref{e:noncovcondn} and suppose $\min_{z \in bD}g(z) < c_D$. Then, there exist $z_0\in bD$ and $\eps_0>0$ such that $\max_{w \in S(z_0;2\eps_0)}g(w) < c_D$. Moreover, since $||g||_\infty<\infty$ and $\de_n\rightarrow 0$ as $n\rightarrow\infty$, there is an $n_{g,\eps}$ large enough so that if $n\geq n_{g,\eps}$, then $S(z_0;\eps_0)\cap S(w;\bde_n(w))= \emptyset$ whenever $w\notin S(z_0;2\eps_0)$. Now, suppose $S(z_0;\eps_0) \setminus S(\eta_n; c_D\delta_n) \neq \emptyset$ for some $n\geq n_{g,\eps}$. By the monotonicity of cuts, $S(\eta_n \cap S(z_0;2\eps_0) ; \bde_n) \subset S(\eta_n; c_D \delta_n)$. Thus, 
	\bes
		S(z_0;\eps_0) \setminus S(\eta_n; \bde_n)= 
			S(z_0;\eps_0) \setminus S(\eta_n \cap S(z_0;2\eps_0) ; \bde_n) \neq \emptyset.
	\ees
In other words, for any $n\geq n_{g,\eps}$, $S(z_0;\eps_0) \setminus S(\eta_n; c_D\delta_n) \neq \emptyset$ implies that $S(z_0;\eps_0) \setminus S(\eta_n ; \bde_n) \neq \emptyset$, which (by Lemma \ref{L:inc=cov}) further implies that $P(\eta_n ; \bde_n) \not \subset D$. Thus, \eqref{e:noncovcondn} implies $(a)$.

We now establish \eqref{e:noncovcondn}. Let $m\in\mathbb N$ and $\{p^1,\ldots,p^m\}\subset bD$ be as in Lemma \ref{L:Vitali} corresponding to $\delta = \delta_n$.  Note that both $m$ and the $p^j$'s change with $n$, but we do not indicate this in the notation for convenience. Now, given $z \in bD$ and $\eps>0$,  set
$$ \ell_n :=  \ell_{\delta_n,\eps,z} = | \{j:  p^j \in S(z;\epsilon) \} |.$$
Without loss of generality,  we assume that $\{1,...,\ell_n\}=\{j:p^j\in S(z;\eps)\}$. We claim that, in order to prove \eqref{e:noncovcondn}, it suffices to show the existence of a $b=b_D>0$ such that for all $z \in bD$ and $\eps>0$, with high probability there exists a $j\in\{1,...\ell_n\}$ such that $\eta_n\cap S(p^j;b\mathfrak K \de_n)=\emptyset$, i.e., 
\begin{equation}
\label{e:ncovoneempty}
\lim_{n \to \infty}\BP\left(\bigcup_{j=1}^{\ell_n} \left\{\eta_n \cap S(p^j ; b\mathfrak K\delta_n) = \emptyset \right\}\right)  = 1.
\end{equation}
To see this, set $z=z_0$ and $\eps=\eps_0$. Now, suppose $\eta_n\cap S(p^j;b\mathfrak K\de_n)=\emptyset$ for some $j\in\{1,...,\ell_n\}$, where $\ell_n=	\ell_{\de_n,\eps_0,z_0}$. Then, by the quasi-symmetry of the quasimetric $\mathtt d$,  there is a $c_D>0$ depending only on $\mathfrak K$ and $b$ such that $\min\{\mathtt d(p^j,W):W\in\eta_n\}>c_D\delta_n$, i.e., $p^j\notin S(\eta_n;c_D \delta_n)$. Thus, $S(z_0;\eps_0)\setminus S(\eta_n;c_D \delta_n) \neq \emptyset$. In other words, \eqref{e:ncovoneempty} implies \eqref{e:noncovcondn}. 

Finally, it remains to prove \eqref{e:ncovoneempty}. Let $z\in bD$ and $\eps>0$. Let $b\in (0,1)$. Using that the caps $S(p^j ; c\mathfrak K\delta_n)$, $j=1,...,\ell_n$, are pairwise disjoint, the Poisson process is completely independent, $||f||_\infty<\infty$, and \eqref{eq:cap},  we have that for large enough $n$, 
\bea
\BP\left(\bigcap_{j=1}^{\ell_n} \left\{ \eta_n \cap S(p^j ; b\mathfrak K\delta_n) \neq \emptyset\right\}\right) 
&=& \prod_{j=1}^{\ell_n} \BP(\eta_n \cap S(p^j ; b\mathfrak K\delta_n) \neq \emptyset ) \notag \\
& =& \prod_{j=1}^{\ell_n} \left(1 - e^{-n\sigma_f(S(p^j ; b\mathfrak K\delta_n))}\right)  \notag\\
& \leq& \prod_{j=1}^{\ell_n} \left(1 - e^{-nC_{f,D}(b\mathfrak K\delta_n)^d}\right)
\notag\\
&=& \left(1 - n^{-C_{f,D}(b\mathfrak K)^d}\right)^{\ell_n}.  \label{e:probnonemptycell}
\eea
 We assume that $n$ is large enough so that \eqref{e:probnonemptycell} holds and $S(w;\delta_n) \cap S(z;\eps /2)=\emptyset$ whenever $w\notin S(z;\eps )$. Recalling that $bD\subset\cup_{j=1}^mS(p^j;\delta_n)$ and $S(p^j;\delta_n)\cap S(z;\eps)=\emptyset$ for $j=\ell_n+1,...,m$, we thus obtain that $S(z;\eps /2) \subset \cup_{j=1}^{\ell_n} S(p^j;\delta_n)$. Thus, by \eqref{eq:cap}, there is a $C_D>0$ such that $\sigma(S(z;\eps /2)) \leq \ell_n\left(C_D\delta_n^d\right)$, i.e., 
	\bes
		\ell_n\geq C_D^{-1}\de_n^{-d}\sigma(S(z;\eps /2))\geq C_\eps\de_n^{-d}
		=C_\eps\left(\frac{n}{\log n}\right),
	\ees
where $C_{\eps } := C_D^{-1}\min\{\sigma(S(z;\eps /2)):{z \in bD}\}$. Thus, for sufficiently large $n=n_\eps$, we have that $\ell_n\geq n^\beta$ for some $\beta<1$. Now, choosing $b<1$ so that $C_{f,D}b^d\mathfrak K^d < \beta$,  we have that the right-hand side of \eqref{e:probnonemptycell} is bounded above by $(1 - n^{-C_{f,D}b^d\mathfrak K^d})^{n^{\beta}}$, which converges to $0$ as $n\rightarrow\infty$. This proves \eqref{e:ncovoneempty} and, therefore, (a) for $\eta_n$.
\medskip

\noindent {\em Proof of $(b)$.} Let $\eta_n$ be a Poisson process on $bD$. Let $\{p^1,...,p^m\}$ as in the proof of $(a)$ above. Recall that $m$ depends on $n$, so we denote it by $m_n$ in this proof. We claim that it suffices to prove that for any $\alpha>0$, there is a $b\geq 1$ so that
\begin{equation}
\label{e:covnoempty}
\BP\left( \bigcup_{j=1}^{m_n} \{ \eta_n \cap S(p^j ; b\delta_n) = \emptyset \}\right) =\mathcal O(n^{-\alpha})\ \text{as}\ n\rightarrow\infty.
\end{equation}
To see this, suppose  for some $b\geq 1$, $\eta_n \cap S(p^j ; b\delta_n) \neq \emptyset$ for all $j=1,...,m_n$. Then, by the quasi-symmetry and the quasi-triangle inequality satisfied by $\mathtt d$,  we obtain that for for some $C_D>0$, $S(p^j; \delta_n) \subset S(\eta_n ; C_D \delta_n)$ for all $j=1,...,m_n$. Now, if $\min_{z \in bD}g(z) \geq C_D$, then for all $j$,  $S(p^j;\delta_n) \subset S(\eta_n ;  \bde_n)$.  Thus,  from Lemma \ref{L:Vitali}(a) and \ref{L:inc=cov},   we have that $P(\eta_n ; \bde_n) \subset D$. Thus, \eqref{e:covnoempty} implies $(b)$. 

It now remains to prove \eqref{e:covnoempty}.
So,  using the union bound and similar arguments as in the proof of $(a)$ above,  we have that
\beas
\BP\left(\bigcup_{j=1}^{m_n} \{ \eta_n \cap S(p^j ; b\delta_n) = \emptyset\}\right)
& \leq& \sum_{j=1}^{m_n} \BP\left(\eta_n \cap S(p^j ; b\delta_n) = \emptyset\right)\\
&=& \sum_{j=1}^{m_n} e^{-n\sigma_f(S(p^j ; b\delta_n))} \leq m_n\left( n^{-C_{f,D}b^d}\right).
\eeas
Now, since $S(p^j ; \mathfrak K\delta_n)$'s are mutually disjoint, we have from \eqref{eq:cap} a $C>0$ such that $m_nC\mathfrak K^d\delta_n^d \leq \sum_{j=1}^{m_n}\sigma\left(S(p^j ; \mathfrak K\delta_n)\right)\leq \sigma(bD)$. Modifying $C$ as needed, we get  $m_n \leq C\delta_n^{-d} = Cn/(\log n) \leq Cn$. Thus, 
\bes
 \BP\left(\bigcup_{j=1}^{m_n} \{ \eta_n \cap S(p^j ; b\delta_n)=\emptyset\}\right) \leq C n^{1 - C_{f,D}b^d}.
\ees
Now, \eqref{e:covnoempty} follows by choosing $b$ suitably large, thus completing the proof of $(b)$ for $\eta_n$. 
\medskip

\noindent {\em Proof of $(c)$.} The proofs of $(a)$ and $(b)$ for a binomial process $\beta_n$ are the same as those for $\eta_n$ --- except in the computation of the probabilities of $\cap_{j=1}^{\ell_n} \{ \beta_n \cap S(p^j ; c\delta_n) \neq \emptyset \}$ and $\cup_{j=1}^{m_n} \{ \beta_n \cap S(p^j; c\delta_n)= \emptyset \}$.  

For the proof of $(a)$ for $\beta_n$, we couple $\beta_n$ with a suitable Poisson process to bound $\BP(\cap_{j=1}^{\ell_n} \{ \beta_n \cap S(p^j ; c\delta_n) \neq \emptyset \} )$.  Let $N'_n$ be a Poisson random variable with mean $(1+a)n$ for some $a > 0$. Let $\eta'_n := \{W^1,\ldots,W^{N'_n}\}$ be a Poisson process with intensity measure $(1+a)nf$.  Recall that $W^i$'s are i.i.d.  with density $f$ and $\beta_n = \{W^1,\ldots,W^n\}$. Observe that if $N'_n \geq n$, then $\beta_n \subset \eta'_n$. Therefore,
\begin{align*}
\BP\left(\bigcap_{j=1}^{\ell_n} \{ \beta_n \cap S(p^j ; c\delta_n) \neq \emptyset \} \right) 
& \leq \BP\left(\bigcap_{j=1}^{\ell_n} \{ \beta_n \cap S(p^j ; c\delta_n) \neq \emptyset \} \cap \{N'_n \geq n\} \right) + \BP(N'_n < n) \\
& \leq  \BP\left(\bigcap_{j=1}^{\ell_n} \{ \eta'_n \cap S(p^j ; c\delta_n) \neq \emptyset \}\right) + \BP(|N'_n - (1+a)n| \geq an) .
\end{align*}
On the right-hand side above,  the first term converges to $0$ as $n\rightarrow\infty$ due to the bound in \eqref{e:probnonemptycell} and the subsequent argument. The second term  converges to $0$ as $n\rightarrow\infty$ by Chebyshev's inequality.  Thus, we obtain the analogue of \eqref{e:ncovoneempty} for $\beta_n$. The rest of the proof of $(a)$ for $\beta_n$ can be repeated verbatim. 

To prove $(b)$ for $\beta_n$, we use the obvious inequality $1-x \leq e^{-x}$, $x>0$, to obtain that 
$$ \BP(\beta_n \cap S(p^j ; b\delta_n) = \emptyset) = \left(1 - \sigma_f(S(p^j ; b\delta_n))\right)^n  \leq e^{ -n \sigma_f(S(p^j; b\delta_n))}$$
for all $j=1,...,m_n$. The rest of the proof remains the same as in the case of $\eta_n$. This concludes the proof of $(c)$.
\end{proof}
\appendix
\section{Background on Poisson and Binomial processes}
\label{s:probtool}
We now collect some notions and results on Poisson and binomial processes that are used in the article; see \cite{last2017lectures,peccati2016stochastic} for more details.

{Let $D \subset \Cd$ be} as defined above, and ${\mathcal B} := \mathcal{B}(bD)$ be the Borel $\sigma$-field on
$bD$.   Let $\bN$ be the set of finite counting measures on $bD$,
and ${\mathcal N}$ be the evaluation $\sigma$-field i.e.,  the $\sigma$-field of subsets of $\bN$ generated by
the sets of the form $\{ \varphi \in \bN : \varphi(B)=k\}$, with $B \in {\mathcal
  B}$, $k \in \N \cup \{0\}$.
\begin{definition}
Let $(\Omega, {\mathcal F}, {\mathbb P})$ be a probability space. A {\em point
process} $\eta$ on $bD$ is a random element of $(\bN, {\mathcal N})$,
i.e. $\eta: \Omega \rightarrow \bN$ is measurable. The intensity measure of a point process $\eta$ is the measure $\lambda$ defined as $\lambda(B) = \E[\eta(B)],  B \in {\mathcal B}$.
\end{definition}
Let $\lambda$ be a finite measure. A point process $\eta$ is said to be {\em a Poisson process on $bD$ with intensity measure $\lambda$} if
\begin{enumerate}
\item[(a)] for all $ B \in \mathcal{B}$, the random variable $\eta(B)$ has a Poisson distribution with mean $\lambda(B)$; 
\item[(b)] for all $m \geq 1$ and all pairwise disjoint subsets $B_1,B_2, \ldots, B_m \in \mathcal{B}$,  the random variables $\eta(B_1), \ldots, \eta(B_m)$ are independent ({\em complete independence}).
\end{enumerate}
An elementary way to construct a Poissson process with a finite intensity measure $\lambda$ on $bD$ is as follows.  Let $W^n,  n \geq 1$, be i.i.d.  random variables with distribution $\frac{\lambda(\cdot)}{\lambda(bD)}$ and $N$ be an independent Poisson random variable with mean $\lambda(bD)$.  Then $\eta = \{W^1,\ldots,W^N\}$ is a Poisson process with intensity measure $\lambda$.  We also represent $\eta$ as a counting measure, i.e.,   $\eta = \sum_{i=1}^N \delta_{W^i}.$  We always work with this explicit construction of the Poisson process for convenience. 	

{We recall the so-called difference operators \cite[Section 18.1]{last2017lectures}.  Let $z \in bD,  \phi \in \bN$ and $F : \bN \to \R$ be a measurable function.  The first and second-order difference operators are defined as}
\beas
D_zF(\phi) &:=& F(\phi \cup \{z\}) - F(\phi), \\
D^2_{z,w}F(\phi) &:=& D_z(D_wF(\phi)) = D_w(D_zF(\phi))\\
 &= &F(\phi \cup \{z,w\}) - F(\phi \cup \{w\}) - F(\phi \cup \{z\}) + F(\phi).
\eeas

Let $F : \bN \to \R$ be a measurable and square-integrable function with respect to $\eta$, i.e.,  $\BE[F(\eta)^2]  < \infty$.  From \cite[Exercise 18.8 and Theorem 18.7]{last2017lectures},  we have the following bounds on variance 
\begin{equation}
\label{e:variancebounds}
\int ( \BE[D_zF(\eta)])^2 \md \lambda(z) \leq \BV(F(\eta)) \leq \int \BE[(D_zF(\eta))^2] \md \lambda(z). 
\end{equation}
{The upper bound is the well-known Poincar\'e inequality.   We now state a normal approximation result in the Wasserstein metric \eqref{e:defndW} for $F(\eta)$.}
\begin{theorem}(Second-order Poincar\'e inequality,  \cite[Theorem 1.1]{last2016normal})
\label{t:normapproxpoi}
Let $F : \bN \to \R$ be a measurable function with $\BE[F(\eta)] = 0$ and $\BE[F(\eta)^2]  < \infty$.  Define
\begin{align*}
T_1(F) &:= \left[ \int \left( \BE[(D^2_{z^1,z^2}F)^2(D^2_{z^2,z^3}F)^2] \right)^{1/2}\left( \BE[(D_{z^1}F)^2(D_{z^2}F)^2] \right)^{1/2} \md \lambda^3(z^1,z^2,z^3) \right]^{1/2}, \\
T_2(F) &:=  \left[ \int \BE[(D^2_{z^1,z^2}F)^2(D^2_{z^1,z^3}F)^2]  \md \lambda^3(z^1,z^2,z^3)   \right]^{1/2}, \\
T_3(F) &:=  \int \BE[|D_zF|^3] \md \lambda(z),
 \end{align*}
where $\eta$ has been omitted in the argument of $F$ for convenience. Then
\be\label{e:2ndPoincare}
 d_W\left(\frac{F(\eta)}{\BV[F(\eta)]^{1/2}}, Z\right) \leq  \frac{2T_1(F) + T_2(F)}{\BV[F(\eta)]}  + \frac{T_3(F)}{\BV[F(\eta)]^{3/2}}.
\ee
\end{theorem}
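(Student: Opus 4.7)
The plan is to combine Stein's method for normal approximation with the Malliavin calculus on Poisson space, following the Malliavin--Stein template of Nourdin--Peccati (extended to the Poisson setting by Peccati--Sol\'e--Taqqu--Utzet). For each $h\in \mathrm{Lip}(1)$, Stein's equation $f_h'(x)-xf_h(x)=h(x)-\BE[h(Z)]$ admits a solution $f_h$ with $\|f_h'\|_\infty\le 1$ and $\|f_h''\|_\infty\le 2$. Setting $\sigma^2:=\BV[F(\eta)]$, Stein's lemma gives
\bes
d_W\bigl(F(\eta)/\sigma,\,Z\bigr)\le \sup_{h\in \mathrm{Lip}(1)}\bigl|\BE[f_h'(F(\eta)/\sigma)-(F(\eta)/\sigma)f_h(F(\eta)/\sigma)]\bigr|,
\ees
so the task reduces to bounding this Stein discrepancy in terms of the difference operators appearing in $T_1$, $T_2$, and $T_3$.

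The next step is to apply the integration-by-parts formula for Poisson functionals. Using the Mecke equation and the chaos decomposition of $L^2(\BP)$-functionals of $\eta$, one has, for centered $F$ in the domain of $L$ and sufficiently smooth $g$,
\bes
\BE[F(\eta)\,g(F(\eta))]=\BE\!\left[\int g'(F(\eta))\,(D_zF)\,(-D_zL^{-1}F)\,\md\lambda(z)\right]+\mathcal E(g),
\ees
where $L$ is the Ornstein--Uhlenbeck generator on Poisson space and $\mathcal E(g)$ is controlled up to a universal constant by $\|g''\|_\infty\int\BE[|D_zF|^3]\md\lambda(z)$, via a second-order Taylor expansion of $g$ combined with the Mecke formula. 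Applied with $g=f_h$, substituting $F(\eta)/\sigma$ for $F$, and using $\|f_h''\|_\infty\le 2$, the Stein discrepancy is bounded up to universal constants by
\bes
\frac{1}{\sigma^2}\,\BE\Bigl|\sigma^2-\int (D_zF)\,(-D_zL^{-1}F)\,\md\lambda(z)\Bigr|+\frac{1}{\sigma^3}\int\BE[|D_zF|^3]\,\md\lambda(z),
\ees
whose second summand is a constant multiple of $T_3(F)/\sigma^3$.

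The crux is thus to bound the $L^1$-deviation of $\int (D_zF)(-D_zL^{-1}F)\,\md\lambda(z)$ from $\sigma^2$. By Cauchy--Schwarz, it suffices to control the standard deviation of this Poisson functional, which I would attack using the first-order Poincar\'e inequality \eqref{e:variancebounds}. This requires computing $D_w[(D_zF)(-D_zL^{-1}F)]$, for which I would exploit that $-D_wL^{-1}F$ admits a representation through the Poisson Ornstein--Uhlenbeck semigroup --- an $L^2$-contraction --- allowing one to propagate $D_w$ inside $-L^{-1}$. A discrete Leibniz expansion then splits the result into contributions in which $D_w$ hits $D_zF$ (producing the second-order difference $D^2_{w,z}F$, coupled to a factor of $-D_zL^{-1}F$) and contributions in which $D_w$ acts on $-D_zL^{-1}F$ (controlled, via the semigroup representation and another chaos-preserving bound, by integrals of further second-order differences). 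Two applications of Cauchy--Schwarz over the resulting triple integrals with respect to $\md\lambda^3$ regroup these contributions precisely as $T_1(F)^2$ and $T_2(F)^2$, yielding the bound.

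The main obstacle, and the source of the theorem's depth, is the pathwise control of the pseudoinverse $L^{-1}$: propagating the difference $D_w$ past $L^{-1}$ without losing information on chaos order is what forces the appearance of the second-order differences $D^2_{w,z}F$ and is the reason this is a \emph{second-order} Poincar\'e inequality, strictly stronger than the first-order one recorded in \eqref{e:variancebounds}. A careful accounting on the chaos decomposition of $F$, together with the contractivity of the Poisson Ornstein--Uhlenbeck semigroup, makes this precise; assembling all bounds and normalising by $\sigma$ delivers \eqref{e:2ndPoincare}.
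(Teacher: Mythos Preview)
The paper does not prove this theorem at all: it is stated in Appendix~\ref{s:probtool} purely as background, with a citation to \cite[Theorem~1.1]{last2016normal}, and is then invoked as a black box in the proof of Theorem~\ref{t:cltpoi}. So there is no ``paper's own proof'' to compare against.

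Your sketch is a reasonable outline of how the result is actually established in \cite{last2016normal}: Stein's equation, the Malliavin integration-by-parts on Poisson space producing the term $\int (D_zF)(-D_zL^{-1}F)\,\md\lambda(z)$, a Taylor remainder controlled by $T_3$, and then the Poincar\'e inequality applied to the carr\'e-du-champ term with the semigroup/chaos machinery to pass from $L^{-1}$ to second-order differences. That is the right architecture. If you were to flesh this out into a self-contained proof, the part requiring the most care is exactly the one you flag: showing that the moments of $-D_zL^{-1}F$ and $D_w(-D_zL^{-1}F)$ are dominated by those of $D_zF$ and $D^2_{w,z}F$, which in \cite{last2016normal} is done via the Mehler-type representation of the Ornstein--Uhlenbeck semigroup and Jensen's inequality rather than a direct chaos-by-chaos estimate. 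But for the purposes of this paper, a citation suffices.
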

We now state similar results for the Binomial process $\beta_n$.  Recall that $\beta_n = \{W^1,\ldots,W^n\}$ with $n$ i.i.d.  points sampled according to the distribution $f\sigma$ where $f$ is a probability density.  Let $F : \bN \to \R$ be a measurable function.  From the Efron--Stein inequality (\cite[Remark 2.5]{lachieze2017new}),  one can derive the following standard variance upper bound for $F(\beta_n)$ satisfying $\BE[F(\beta_n)^2] < \infty$. 
\begin{equation}
\label{e:ESsimple}
\BV(F(\beta_n)) \leq n \int \BE[(D_zF(\beta_{n-1}))^2] f(z) \md \sigma(z).
\end{equation}
We now state a normal approximation theorem for functionals of binomial process, after introducing the necessary notation.  The theorem is obtained by combining the bounds in \cite[Theorem 2.2]{chatterjee2008new} and \cite[Theorem 5.1]{lachieze2017new} (see also \cite[Remark 5.4]{lachieze2017new}).  Let $\bfW = (W^1,\ldots,W^n)$ and let $\bfW',  \bfW''$ be independent copies of $\bfW$.   We say that $\bfZ = (Z^1,\ldots,Z^n)$ is {\em a recombination of $\bfW$}  if ${Z^i} \in \{W^i,  (W')^i,  (W'')^i\}$ for each $1 \leq i \leq n$. We denote the set of possible recombinations of $\bfW$ as $Rec(\bfW)$.  {Let $\bfW^{(i)} = ( \ldots,W^{i-1},W^{i+1},\ldots), 1 \leq i \leq n$, be the vector obtained by dropping the $i$th co-ordinate from $\bfW$. Similarly, let $\bfW^{(i,j)} = (\bfW^{(i)})^{(j)},  1 \leq i \neq j \leq n$, be the vector obtained by dropping the $i$th and $j$th co-ordinates from $\bfW$. We abuse notation and set $F(\mathbf V) = F(\{V^1,\ldots,V^m\})$ for any vector $\mathbf V=(V^1,...,V^m)$.}  In other words,  we view vectors as point sets when applying $F$. For $1 \leq i \neq j  \leq n$,  define
\begin{align*}
D_iF(\bfW) &:= F(\bfW) - F(\bfW^{(i)}), \\
D_{i,j}F(\bfW) =  D_{j,i}F(\bfW) & := F(\bfW) - F(\bfW^{(i)}) - F(\bfW^{(j)}) + F(\bfW^{(i,j)}).
\end{align*}  
\begin{theorem}(\cite[Lemma 2.3]{thale2018random})
\label{t:normapproxbin}
Let $F : \bN \to \R$ be a measurable function with $\BE[F(\beta_n)] = 0$ and $\BE[F(\beta_n)^2]  < \infty$.  Define
\begin{align*}
T'_1(F) &:=\sup\left\{\BE\left[\1[D_{1,2}F(\bfY) \neq 0]D_1f(\bfZ)^2D_2f(\bfZ')^2\right]:\bfY,\bfZ,\bfZ' \in Rec(\bfW)\right\},  \\
T'_2(F) &:=   \sup\left\{\BE\left[\1[D_{1,2}F(\bfY) \neq 0, D_{1,3}F(\bfY') \neq 0]D_2f(\bfZ)^2D_2f(\bfZ')^2\right]:\bfY,\bfZ,\bfZ' \in Rec(\bfW)\right\}, \\
T'_3(F) &:= \BE[|D_1F(\bfW)|^4], \\
T'_4(F) &:=  \BE[|D_1F(\bfW)|^3].
 \end{align*}
Then, there is a finite constant $C>0$ such that  
\be\label{e:2ndPBinom} d_W\left(\frac{F(\beta_n)}{\BV[F(\beta_n)]^{1/2}}, Z\right) \leq C\frac{\sqrt{n^2T'_1(F)} + \sqrt{n^{3/2}T'_2(F)} + \sqrt{nT'_3(F)} }{\BV(F(\beta_n))} + \frac{nT'_4(F)}{\BV(F(\beta_n))^{3/2}}.
\ee
\end{theorem}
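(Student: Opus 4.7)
The plan is to derive the Wasserstein bound via Stein's method of exchangeable pairs, following the framework introduced by Chatterjee for non-linear functionals of independent samples. Write $\sigma^2 := \BV[F(\beta_n)]$ and $W := F(\beta_n)/\sigma$. The Stein characterization bounds $d_W(W,Z)$ by $\sup_f |\BE[f'(W) - W f(W)]|$ over test functions associated with the Stein equation; the strategy is to construct an exchangeable pair for which the right-hand side decomposes into an $L^2$ term (leading to $T'_1,T'_2,T'_3$) and an $L^3$ term (leading to $T'_4$).

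First, I would build the exchangeable pair by drawing $I$ uniform on $\{1,\ldots,n\}$ independent of $\bfW$, and an independent copy $\tilde W^I$ of $W^I$, and set $\bfW' := (W^1,\ldots,W^{I-1},\tilde W^I,W^{I+1},\ldots,W^n)$, $W' := F(\bfW')/\sigma$. By conditioning on $I$ and using independence of the coordinates, one checks exactly $\BE[F(\bfW') - F(\bfW) \mid \bfW] = -\tfrac{1}{n}(F(\bfW) - \BE F(\beta_n))$, which is Chatterjee's linear regression property with $\lambda = 1/n$. The fundamental identity then yields
\[
d_W(W,Z) \;\leq\; \frac{n}{2\sigma^2}\sqrt{\BV\bigl(\BE[(F(\bfW')-F(\bfW))^2 \mid \bfW]\bigr)} \;+\; \frac{n}{4\sigma^3}\,\BE|F(\bfW')-F(\bfW)|^3.
\]
By exchangeability in $I$, the third-moment term equals $\tfrac{n}{4\sigma^3}\BE|D_1 F(\bfW)|^3 = \tfrac{n}{4\sigma^3} T'_4(F)$, reproducing the last term in the claimed bound.

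The substantial work lies in the first term. Write $G(\bfW) := \sum_{i=1}^n \BE[(D_i F(\bfW;\tilde W^i))^2 \mid \bfW]$, so $\BE[(F(\bfW')-F(\bfW))^2 \mid \bfW] = G(\bfW)/n$; I would then apply the Efron--Stein inequality (\ref{e:ESsimple}) to $G$, obtaining $\BV(G) \leq \sum_{j=1}^n \BE[(D_j G)^2]$. Expanding $D_j G$ term by term, each summand of $G$ has its $j$-th coordinate either matching or not matching the index of the squared first-difference, producing two kinds of contributions: diagonal pieces $(D_j F)^2$ bounded via Cauchy--Schwarz by $T'_3$, and off-diagonal cross terms of the form $(D_{i,j}F)(D_iF + D_iF^{\text{tilde}})$ that, after splitting by the indicator $\mathbf 1[D_{i,j}F \neq 0]$, match exactly the integrands defining $T'_1$ (one second-difference paired with two first-differences) and $T'_2$ (two second-differences sharing an index).

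The main obstacle is the combinatorics underlying the appearance of the recombination vectors $\bfY,\bfZ,\bfZ' \in Rec(\bfW)$. When one computes $\BV(G)$ honestly, each expanded expectation involves up to four instances of the original coordinates interleaved with their independent copies $\tilde W^i$; the notation $Rec(\bfW)$ is precisely the minimal combinatorial vocabulary to record which coordinates across different difference operators may be treated as independent versus shared. Careful bookkeeping of the double sum $\sum_{i,j}$ produces the factor $n^2$ in front of $T'_1(F)$ and, after using Cauchy--Schwarz to symmetrize a further factor over the resampling index, the factor $n^{3/2}$ in front of $T'_2(F)$; the diagonal $i=j$ contribution contributes the $nT'_3(F)$ piece. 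Combining, taking a square root, and dividing by $\sigma^2$ as required by the Chatterjee estimate, yields the first half of the bound. The delicate point throughout is ensuring that all the independence used to separate $D_{i,j}F$ factors survives the conditional expectations implicit in the definition of $G$, which is exactly what the $Rec(\bfW)$ supremum guarantees.
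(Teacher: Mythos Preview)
The paper does not prove this theorem: it is quoted in the appendix as a background result, attributed to \cite[Lemma 2.3]{thale2018random} and described as ``obtained by combining the bounds in \cite[Theorem 2.2]{chatterjee2008new} and \cite[Theorem 5.1]{lachieze2017new}''. There is therefore no paper proof to compare against.

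That said, your sketch is a faithful outline of the argument in those sources. Chatterjee's bound via the exchangeable pair $(\bfW,\bfW')$ with regression parameter $1/n$ gives exactly the two-term decomposition you wrote, and the third-moment term is immediately $nT'_4/\sigma^3$. The variance term is handled, as you say, by applying Efron--Stein to the conditional second moment $G(\bfW)$; the expansion of $D_jG$ into diagonal and off-diagonal pieces, together with the bookkeeping of which coordinates are shared versus independently resampled, is precisely what generates the $Rec(\bfW)$ suprema and the powers $n^2, n^{3/2}, n$ in front of $T'_1, T'_2, T'_3$. One point you leave implicit is that the $n^{3/2}$ factor for $T'_2$ comes from bounding a triple sum $\sum_{i,j,k}$ with two shared indices via Cauchy--Schwarz (not just a double sum), and that the passage from indicators $\1[D_{i,j}F\neq 0]$ to the stated form requires the symmetry of the $W^i$'s to reduce to indices $1,2,3$; but these are routine once the structure is set up.
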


While we have  only used bounds in terms of the first and second-order difference operators,  we note that higher-order difference operators may be defined recursively and are important ingredients in the stochastic analysis of Poisson and binomial processes; see  \cite{last2017lectures,peccati2016stochastic,lachieze2017new}. 

\section{The M{\" o}bius geometry of strongly $\C$-convex domains}\label{SS:curvature}

In Section~\ref{S:GeomEst}, we often use the unitary invariance of strong $\C$-convexity and induced polyhedra to simplify certain computations. We give here the proofs of Lemmas~\ref{L:deffn} and \ref{L:Model}, which are key to such simplifications. {In fact, we prove a slight generalization of Lemma~\ref{L:deffn} to demonstrate why the class of polyhedra considered in this paper are invariant under the group of complex affine transformations, but not the full group of linear fractional transformations.} We also elaborate on the curvature function $\nu_D$ that appears as a constant in the geometric estimates. Most of these results are implicitly present in the literature, but we gather them here for easy reference.  

The transformation law in Part $(b)$ of Lemma~\ref{L:deffn} is a special case of a more general transformation law, for which we need the following definition.
 
\begin{definition} A {\em linear fractional transformation} (or {\em M{\" o}bius transformation}) on $\Cd$ is a map of the form 
		\be\label{E:LFT}
		\M_\beta:z\mapsto\left(\frac{b_{10} +b_{11}z_1+\cdots b_{1d}z_d}{b_{00}+b_{01}z_1+\cdots+ b_{0d}z_d},...,\frac{b_{d0}+b_{d1}z_1+\cdots+ b_{dd}z_d}{b_{00}+b_{01}z_1+\cdots+ b_{0d}z_d}\right),
\ee
where $\beta=(b_{jk})_{0\leq j,k\leq d}\in \GL(d+1;\C)$. The function $(z_1,...,z_d)\mapsto b_{00}+b_{01}z_1+\cdots +b_{0d}z_d$ is denoted by $\beta_0$, and $\ell_\beta=\{z\in\Cd:\beta_0(z)=0\}$ is the set of indeterminacy of $\M_\beta$. 
\end{definition}

Any affine transformation on $\Cd$ is a linear fractional transformation. Lemma~\ref{L:deffn} is thus a corollary of the following result, which is implicitly present in \cite[Proposition 4.1]{LaSt} and \cite[Theorem 3]{Bo05}.

\begin{lemma} Let $D$ and $\rho$ be as in Definition~\ref{D:geom}. 
\begin{itemize}
\item [$(a)$] For any defining function $\wt\rho$ of $D$,  $L_{\wt\rho}(z,w)=L_{\rho}(z,w)$ for all $(z,w)\in\Cd\times bD$.
\item [$(b)$] Suppose $\beta\in \GL(d+1;\C)$ such that $\overline D\cap \ell_\beta=\emptyset$. Then, for $\rho^*=\rho\circ\M_\beta^{-1}$, 
	\be\label{eq:MobiusonL}
		L_{\rho}(z,w)=\frac{||\bdy\rho^*( \M_\beta(w))||}{||\bdy\rho(w)||}
		\frac{\beta_0(z)}{\beta_0(w)}
		L_{\rho^*}(\M_\beta(z),\M_\beta(w)),\quad z\in \Cd\setminus \ell_\beta, w\in bD. 
	\ee
\end{itemize}  
\end{lemma}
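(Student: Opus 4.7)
For Part $(a)$, the plan is to use the standard fact that any two $\cont^2$ defining functions of $D$ differ by a positive $\cont^1$ factor. That is, $\wt\rho=h\rho$ for some positive $h\in\cont^1(\Cd)$ (on a neighborhood of $bD$). Applying the Leibniz rule gives $\bdy\wt\rho(w)=h(w)\bdy\rho(w)+\rho(w)\bdy h(w)$, and since $\rho(w)=0$ for $w\in bD$, we obtain $\bdy\wt\rho(w)=h(w)\bdy\rho(w)$. Dividing by the norm, $\bdy\wt\rho(w)/\|\bdy\wt\rho(w)\|=\bdy\rho(w)/\|\bdy\rho(w)\|$ (positivity of $h$ eliminates any sign ambiguity), and hence $L_{\wt\rho}(z,w)=L_\rho(z,w)$.

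For Part $(b)$, the strategy is to reduce everything to two ingredients: $(i)$ the chain rule for $\rho=\rho^*\circ\M_\beta$, and $(ii)$ a rational identity expressing the secant $\M_\beta(w)-\M_\beta(z)$ in terms of the complex Jacobian $J\M_\beta(w)$ acting on $w-z$. Since $\M_\beta$ is holomorphic, the chain rule in complex coordinates yields $\bdy\rho(w)=(J\M_\beta(w))^T\bdy\rho^*(\M_\beta(w))$. For the second ingredient, writing $\M_\beta(\xi)_j=M_j(\xi)/\beta_0(\xi)$ with $M_j(\xi)=b_{j0}+\sum_k b_{jk}\xi_k$, direct expansion gives
\begin{equation*}
M_j(w)\beta_0(z)-M_j(z)\beta_0(w)=\sum_k\bigl(b_{jk}\beta_0(w)-b_{0k}M_j(w)\bigr)(w_k-z_k),
\end{equation*}
while $\partial_k(M_j/\beta_0)(w)=\bigl(b_{jk}\beta_0(w)-b_{0k}M_j(w)\bigr)/\beta_0(w)^2$. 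Comparing these two expressions produces the key identity
\begin{equation*}
\M_\beta(w)-\M_\beta(z)=\frac{\beta_0(w)}{\beta_0(z)}J\M_\beta(w)(w-z),\qquad z\notin\ell_\beta.
\end{equation*}

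To conclude, I would pair these two ingredients: using the chain-rule identity and then the secant identity,
\begin{equation*}
\bigl\langle\bdy\rho^*(\M_\beta(w)),\,\M_\beta(w)-\M_\beta(z)\bigr\rangle=\tfrac{\beta_0(w)}{\beta_0(z)}\bigl\langle\bdy\rho^*(\M_\beta(w)),J\M_\beta(w)(w-z)\bigr\rangle=\tfrac{\beta_0(w)}{\beta_0(z)}\bigl\langle\bdy\rho(w),w-z\bigr\rangle.
\end{equation*}
Dividing both sides by $\|\bdy\rho^*(\M_\beta(w))\|$ and rearranging yields \eqref{eq:MobiusonL}. Lemma~\ref{L:deffn}$(b)$ then follows by specializing to $\beta\in\GL(d;\C)$ viewed as an affine map (so $\beta_0\equiv\mathrm{const}$, making the rational factor $\beta_0(z)/\beta_0(w)=1$); unitarity further forces $\|\bdy\rho^*(M(w))\|=\|\bdy\rho(w)\|$.

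The main obstacle is the secant identity, which is where all the projective geometry is hidden: it is the statement that Möbius maps are "linear up to rescaling by $\beta_0$." The rest of the proof is essentially bookkeeping with the chain rule and a division. Once this identity is in hand, the transformation law \eqref{eq:MobiusonL} drops out cleanly, and the hypothesis $\overline D\cap\ell_\beta=\emptyset$ is used solely to ensure $\beta_0(w)\neq 0$ on $bD$ so that the rational factors are meaningful.
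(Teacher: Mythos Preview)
Your proof is correct and follows essentially the same approach as the paper: Part~(a) is identical, and Part~(b) in both cases amounts to the chain rule $\bdy\rho(w)=\bdy\rho^*(\M_\beta(w))\cdot D_\C\M_\beta(w)$ combined with a direct computation of how $D_\C\M_\beta(w)$ acts on $w-z$. Your presentation is slightly cleaner in that you isolate the ``secant identity'' $\M_\beta(w)-\M_\beta(z)=\tfrac{\beta_0(w)}{\beta_0(z)}J\M_\beta(w)(w-z)$ as a standalone fact, whereas the paper writes out the Jacobian explicitly in block form and expands the product line by line; the underlying algebra is the same.
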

\begin{proof} To prove $(a)$, note that there exists a positive $\cont^1$ function $h$ defined on some neighborhood $U$ of $b D$, such that, $\wt\rho=h\rho$. Thus, since $\rho$ vanishes on $bD$, $\bdy\wt\rho(w)=h(w)\bdy\rho(w)$ for any $w\in bD$, and the claim follows. 

For $(b)$, first observe that {$\M_\beta(w)=(\M^1_\beta(w),...,\M^d_\beta(w))=({\beta_0(w)})^{-1} \Big({\boldsymbol b+w\cdot\wt\beta^\tr}\Big)$, and 
	\bes
		(D_\C\M_\beta)(w):=\left(\partl{\M^j_\beta(w)}{z_k}\right)_{1\leq j,k\leq d}=\frac{1}{\beta_0(w)}\wt \beta
			-\frac{1}{\beta_0(w)^2}\big(\boldsymbol b^\tr+\wt \beta\cdot
w^\tr\big)\cdot (b_{01},...,b_{0d}),
	\ees
where $\boldsymbol b=(b_{10},...,b_{d0})$ and $\wt \beta$ is the principal minor $(b_{jk})_{1\leq j,k\leq d}$. Now, setting $w^*=\M_\beta(w)$ and denoting the product of two matrices $A$ and $B$ by $A\cdot B$, we get that}
	\beas
		||\bdy\rho(w)|| L_{\rho}(z,w)&=&{\bdy(\rho^*\circ \M_\beta)(w)\cdot (w-z)^\tr}\\
		&=&{\bdy\rho^*(\M_\beta(w))\cdot  (D_\C\M_\beta)(w)\cdot (w-z)^\tr}\\
		&=&{\bdy\rho^*(w^*)}\cdot
		 \left( \frac{1}{\beta_0(w)}\wt \beta-\frac{1}{\beta_0(w)^2}
			\big(\boldsymbol b^\tr+\wt \beta\cdot w^\tr\big)\cdot 
				(b_{01},...,b_{0d})\right)\cdot(w-z)^\tr\\
		&=&{{\bdy\rho^*(w^*)}\cdot\left(
		 \frac{1}{\beta_0(w)}\wt \beta \cdot(w^\tr-z^\tr)-\frac{\beta_0(w)-\beta_0(z)}{\beta_0(w)^2}
			\big(\boldsymbol b^\tr+\wt \beta\cdot w^\tr\big)\right)}\\
		&=&{\bdy\rho^*(w^*)}\cdot
		 \left(-\frac{1}{\beta_0(w)}(\boldsymbol b^\tr
				+\wt\beta \cdot z^\tr)+\frac{\beta_0(z)}{\beta_0(w)^2}
					\big(\boldsymbol b^\tr+\wt \beta \cdot w^\tr\big)\right)\\
	&=&{||\bdy\rho^*(w^*)||}\frac{\beta_0(z)}{\beta_0(w)}
	\left<\frac{\bdy\rho^*(w^*)}{||\bdy\rho^*(w^*)||},
		\M_\beta(w)-\M_\beta(z)\right>,
	\eeas
which is precisely \eqref{eq:MobiusonL}. 
\end{proof}

Next, as an analogue of the Gaussian curvature for hypersurfaces in $\rl^d$, we consider the determinant of the (self-adjoint) linear operator $\pi\circ S:\mathcal H_wM\rightarrow \mathcal H_wM$, where $S:\mathcal T_wM\rightarrow \mathcal T_wM$ is the shape operator, and $\pi:\mathcal T_wM\rightarrow  \mathcal H_wM$ is the orthogonal projection with respect to the standard Euclidean inner product on $\mathcal T_wM$ (viewed as a subspace of $\rl^{2d}$).   A more computationally convenient definition of this second-order quantity is as follows. 

\begin{definition}\label{D:curvature} Let $D\subset\Cd$ and $\rho$ be as above. For any $w\in bD$, we define the {\em complex-restricted curvature} of $bD$ at $w$ as 
	\bes
		\nu_D(w)=(-1)^{d+1}\frac{\det \mathcal Q[\rho](w)}{||\nabla \rho(w)||^{2d+2}},
	\ees
where $\mathcal Q[\rho]$ is the $(2d+2)\times(2d+2)$-matrix function
\bes
\mathcal Q[\rho]=
\left(\begin{array}{c | c|c|c}
		0 & 0 & \rho_k & 0\\
		\hline
		0 & 0 & 0 & \rho_{\, \overline k}\\
		\hline
		\rho_j& 0 & \rho_{j,k} & \rho_{j,\overline k}\\
		\hline 0 & \rho_{\,\overline j} & 	
		\rho_{\, \overline j, k} & \rho_{\, \overline j,\overline k} 
 			\end{array}\right)_{1\leq j,k\leq d},
\ees
where $\rho_j=\smpartl{\rho}{z_j}$, $\rho_{j,\overline k}=\smsecpartl{\rho}{z_j}{\zbar_k}$, for $1\leq j,k\leq d$.
\end{definition}
 
It is easy to check that $\nu_D$ is independent of the choice of defining function, a positive function for strongly $\C$-convex domains, and invariant under unitary transformations; see \cite{Gu21}. We now prove Lemma~\ref{L:Model}, which employs a standard trick in the local study of smooth hypersurfaces; see \cite[Lemma 4.1]{We77}, and the comments preceding Remark 8 in \cite{Ba16}.


\noindent{\em Proof of Lemma~\ref{L:Model}.}  Recall that $A\cdot B$ denotes the product of two matrices $A$ and $B$. First, we choose an $\eps_D>0$ so that $B_w(\eps_D)\cap bD$ is a graph over the tangent space at $w$ for all $w\in bD$. Now, after a unitary transformation, we may assume that $w=0$, $\mathcal T_0bD = \{z\in\Cd:\ima z_d=0\}$ and $\hat\eta(0)=(0,...,0,-i)$. Then, $B_0(\eps_D)\cap D=\{z\in B_0(\eps_D):\rho(z)<0\}$, where
 \bes
		\rho(z)=-y_d+\sum_{j,k=1}^{d-1} a_{j,k}z_j\zbar_k+\rea\sum_{j,k=1}^{d-1} b_{j,k}z_jz_k+\ima\sum_{j=1}^{d-1}c_jz_jx_d+ex_d^2+O||(z',x_d)||^3,
	\ees
for some $a_{j,k}, b_{j,k}, c_j\in\C$, $j,k=1,...,d-1$, $e\in\rl$, such that
	\begin{itemize}
\item $A=(a_{j,k})$ is Hermitian,
\item $B=(b_{j,k})$ is symmetric,
\item $|\rea(z '\cdot B\cdot z'^{\, \tr})|<\zbar '\cdot A\cdot z'^{\, \tr}$ for all nonzero $z'\in\C^{d-1}$. 
\end{itemize}

Now, let $U$ be a unitary matrix in $\GL(d-1;\C)$ such that  $U^*\cdot A\cdot 	U$ is diagonal, and the entries of the symmetric matrix $U^\tr\cdot B\cdot U$ are real. Then, after applying the unitary transformation $U_1=\left(\begin{smallmatrix}U^{-1} &  \boldsymbol {0}_{d-1\times 1}    \\ 0& 1 \end{smallmatrix}\right)$, we obtain that $B_0(\eps_D)\cap U_1(\Om)=\{z\in B_0(\eps_D):\rho_1(z)<0\}$ for
	\bes
		\rho_1(z)=
		-y_d+\sum_{j=1}^{d-1} \alpha_j|z_j|^2+\rea\sum_{j,k=1}^{d-1} \wt b_{j,k}z_jz_k+\ima\sum_{j=1}^{d-1}\wt c_jz_ju_d+eu_d^2+O||(z',x_d)||^3,
	\ees
where $\alpha_1,...,\alpha_{d-1}>0$ are the eigenvalues of $A$, $\wt B=(\wt b_{j,k})$ is a real symmetric matrix, and $\wt c=(\wt c_1,...,\wt c_{d-1})=(c_1,...,c_{d-1})\cdot ~U$. Finally, let $R$ be a real orthogonal matrix that diagonalizes $\wt B$. Letting  $U_w=\left(\begin{smallmatrix}R^{-1} &  \boldsymbol {0}_{d-1\times 1}    \\ 0& 1 \end{smallmatrix}\right)\cdot U_1$, we get that $B_0(\eps_D)\cap U_w(\Om)=\{\rho_2(z)<0\}$ for 
	\bes
		\rho_2(z)=
		-y_d+\sum_{j,k=1}^{d-1} (\alpha_j|z_j|^2+\beta_j(\rea z_j^2))+\ima\sum_{j=1}^{d-1}\gamma_jz_jx_d+ex_d^2+O||(z',x_d)||^3,
	\ees
where $\beta_1,...,\beta_{d-1}$ are the eigenvalues of $\wt B$, and $(\gam_1,...,\gam_{d-1})=\wt c\cdot R$. Since unitary transformations preserve strong $\C$-convexity, we have that $0\leq \beta_j<\alpha_j$ for all $j=1,...,d-1$. 

Now, by the unitary invariance of $\nu_D$, $\nu_D(w)=\nu_{B_0(\eps_D)\cap U_w(D)}(0)$, and a direct computation shows that the latter is equal to $\frac{1}{16}\prod_{j=1}^{d-1}(\alpha_j^2-\beta_j^2)$.
\qed

\section{Area and volumes estimates for model domains}\label{SS:Models} In the proof of Theorem~\ref{T:CutsCapsGeneral}, the first-order terms in the estimates come from the model domains
	\bes
D_{\balp,\bbet}:=\left\{(z',x_d+iy_d)\in\Cd:y_d>\sum_{j=1}^{d-1}\alpha_j|z_j|^2+	\beta_j\rea(z_j^2)\right\},
	\ees
where $\balp=(\alpha_1,...\alpha_{d-1})$ and $\bbet=(\beta_1,...,\beta_{d-1})$ satisfy $0\leq \beta_j<\alpha_j$, $j=1,...,d-1$. The computations for these domains is carried out here. Recall that the dependence of cuts, tubular cuts, caps and visibility regions on the underlying domain is indicated via a superscript, as in $C^D(w;\de)$, $S^D(w;\de)$, etc., and the projection $\pi_{2d-1}(C)$ of $C\subset\Cd$ onto $\C^{d-1} \times \R$ is denoted by $\wt C$. 

\begin{theorem}\label{T:modelcomp}
Let $D_{\balp,\bbet}$ be as above, and $\de>0$. Then, there exists a $C_{\balp,\bbet}>0$ such that
	\bea
		&&\lambda\big(C(0;\de)\big)=
		 \dfrac{h_{d+1}\,\kappa_{2d-2}}{d\,\sqrt{v_{\balp,\bbet}}}\,\de^{d+1},
\label{E:modelcut}\\
		&&\left|\sigma\big(S(0;\de)\big)- \dfrac{h_{d}\kappa_{2d-2}}{\sqrt{v_{\balp,\bbet}}}\,\de^{d}\right|\leq C_{\alpha,\bbet}\:\de^{d+1}, \label{E:modelcap}
	\eea
where $v_{\balp,\bbet}=\prod\limits_{j=1}^{d-1}(\alpha^2_j-\beta_j^2)$, and $\kappa_d$ and $h_d$ are as in Conjecture~\ref{conj}. 

\end{theorem}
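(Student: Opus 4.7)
The plan is to reduce both computations to elementary one-dimensional integrals after a single diagonalizing change of variables. First, I would set up coordinates: using Remark~\ref{R:simplification} applied to $D_{\balp,\bbet}$ with defining function $\rho(z) = f_{\balp,\bbet}(z') - y_d$, one has $L(z,0) = iz_d$, so $C(0;\delta) = \{z \in D_{\balp,\bbet} : |z_d| < \delta\}$ and $S(0;\delta)$ is the graph of $f_{\balp,\bbet}$ over $\wt S(0;\delta) = \{(z',x_d) : x_d^2 + f_{\balp,\bbet}(z')^2 \leq \delta^2\}$. Writing $z_j = u_j + iv_j$, I would rewrite $f_{\balp,\bbet}(z') = \sum_{j=1}^{d-1}(\alpha_j+\beta_j)u_j^2 + (\alpha_j-\beta_j)v_j^2$, which is positive definite since $0 \leq \beta_j < \alpha_j$, and substitute $\tilde u_j = \sqrt{\alpha_j+\beta_j}\,u_j$, $\tilde v_j = \sqrt{\alpha_j-\beta_j}\,v_j$. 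Under this substitution, $f_{\balp,\bbet}(z')$ becomes $\|\tilde z'\|^2$ on $\R^{2d-2}$ with Jacobian $v_{\balp,\bbet}^{-1/2}$.

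For the cut, I would apply Fubini with the $y_d$-integration taken first over $f_{\balp,\bbet}(z') < y_d < \sqrt{\delta^2 - x_d^2}$, yielding
\[
	\lambda(C(0;\delta)) = \int \bigl(\sqrt{\delta^2 - x_d^2} - f_{\balp,\bbet}(z')\bigr)_+ \, d\lambda_{2d-1}(z',x_d).
\]
After the diagonalizing substitution and polar coordinates $r = \|\tilde z'\|$, the inner integral over $\tilde z'$ (with $R = \sqrt{\delta^2 - x_d^2}$) reduces to $(2d-2)\kappa_{2d-2}\int_0^{\sqrt R}(R - r^2)r^{2d-3}\,dr = \kappa_{2d-2} R^d/d$. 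Finally, substituting $x_d = \delta\sin\theta$ in $\int_{-\delta}^\delta (\delta^2 - x_d^2)^{d/2}\,dx_d$ gives exactly $h_{d+1}\delta^{d+1}$, proving \eqref{E:modelcut}.

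For the cap, the Euclidean area element on $bD_{\balp,\bbet}$ above $(z',x_d)$ is $\sqrt{1 + \|\nabla f_{\balp,\bbet}(z')\|^2}\,d\lambda_{2d-1}$ (the gradient being independent of $x_d$). Integrating first in $x_d$ over $|x_d| \leq \sqrt{\delta^2 - f_{\balp,\bbet}(z')^2}$ contributes a factor $2\sqrt{\delta^2 - f_{\balp,\bbet}(z')^2}$. For the leading term, I would drop the $\|\nabla f_{\balp,\bbet}\|^2$ factor; after the diagonalizing substitution and polar coordinates, the remaining integral $2\int_{\|\tilde z'\|^2 \leq \delta}\sqrt{\delta^2 - \|\tilde z'\|^4}\,d\lambda_{2d-2}(\tilde z')$, after the change of variable $r^2 = \delta\sin\phi$, becomes $(2d-2)\kappa_{2d-2}\delta^d\int_0^{\pi/2}\sin^{d-2}\phi\cos^2\phi\,d\phi$. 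A beta-function manipulation using $\Gamma(\tfrac{d+1}{2}) = \tfrac{d-1}{2}\Gamma(\tfrac{d-1}{2})$ yields the identity $(2d-2)\int_0^{\pi/2}\sin^{d-2}\phi\cos^2\phi\,d\phi = 2\int_0^{\pi/2}\cos^d\theta\,d\theta = h_d$, giving the claimed leading constant.

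For the error in \eqref{E:modelcap}, I would use $\sqrt{1+t} \leq 1 + t/2$ to bound the contribution of $\|\nabla f_{\balp,\bbet}(z')\|^2$ by a constant multiple of $\|z'\|^2$ (as $\nabla f_{\balp,\bbet}$ is a linear function of $z'$). On $\wt S(0;\delta)$ the positive-definiteness of $f_{\balp,\bbet}$ forces $\|z'\|^2 \leq C_{\balp,\bbet}\delta$, so the remainder is at most $O(\delta)$ times the leading-order integral, i.e.\ $O(\delta^{d+1})$, with a constant depending only on $\balp,\bbet$. The only real obstacle is bookkeeping -- verifying the beta-function identity cleanly and tracking the constant in the error estimate uniformly -- but no genuinely delicate step intervenes, as the reductions to standard one-dimensional integrals are direct consequences of the simultaneous diagonalization of $f_{\balp,\bbet}$.
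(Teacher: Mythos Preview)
Your proof is correct and follows the same overall strategy as the paper --- direct integration after diagonalizing $f_{\balp,\bbet}$ --- but the execution differs in two places. First, the paper treats the case $\balp=\boldsymbol{1}$ separately and then reduces the general case to it via the complex scaling $(z_1,\dots,z_d)\mapsto(\sqrt{\alpha_1}z_1,\dots,\sqrt{\alpha_{d-1}}z_{d-1},z_d)$, invoking the transformation law for $L$ in Lemma~\ref{L:deffn}; you instead diagonalize $f_{\balp,\bbet}$ directly in real coordinates. Your route is more self-contained, while the paper's makes explicit use of the complex-affine invariance of cuts and caps. Second, for the cap leading term the paper integrates first in the radial variable $r$ and then in $x_d$, which produces $h_d$ directly as $2\int_0^{\pi/2}\cos^d\theta\,d\theta$; you reverse the order of integration, obtaining $(2d-2)\int_0^{\pi/2}\sin^{d-2}\phi\cos^2\phi\,d\phi$ and then needing the beta-function identity $\Gamma(\tfrac{d+1}{2})=\tfrac{d-1}{2}\Gamma(\tfrac{d-1}{2})$ to match constants. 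Both orderings are fine, but the paper's avoids that extra step. The error estimate for \eqref{E:modelcap} is handled identically in both.
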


\begin{proof}First, we assume that $\balp=\boldsymbol 1:=(1,...,1)$, and denote $D_{\balp,\bbet}$ by $D_\bbet$. We claim that
\bea
\lambda\left(C(0;\de)\right)&=&
\dfrac{h_{d+1}\,\kappa_{2d-2}}{d\,\sqrt{v_{\boldsymbol 1,\bbet}}}\,\de^{d+1},\\
\lambda_{2d-1}\left(\wt C(0;\de)\right)&=&
\dfrac{h_{d}\kappa_{2d-2}}{\sqrt{v_{\boldsymbol 1,\bbet}}}\,\de^{d},\label{E:projcut}
\eea
for any $\de>0$. 
To prove this, we set $f_\bbet(z',x_d)=\sum_{j=1}^{d-1}(|z_j|^2+\beta_j\rea z_j^2)$, and observe that 
\beas
	C(0;\de)&=&\left\{(z',x_d+iy_d)\in\Cd: (z',x_d)\in\wt C(0;\de),\ 
		f_\bbet(z',x_d) < y_d < \sqrt{\de^2-x_d^2} \right\},\\		
		\wt C(0;\de)&=&\left\{(z',x_d)\in\C^{d-1}\times\rl:
			f_\bbet(z',x_d)^2
				+x_d^2<\de^2\right\}.
\eeas
Using spherical coordinates on each slice $\left\{\sum_{j=1}^{d-1}(|z_j|^2+\beta_j\rea z_j^2)=r^2\right\}$, $r>0$, we have that 
	\beas
		\lambda\left(C(0;\de)\right)&=&\int_{\wt C(0;\de)}\sqrt{\de^2-x_d^2}
			-\sum_{j=1}^{d-1}(|z_j|^2+\beta_j\rea z_j^2)\, d\lambda_{2d-1}(z',x_d)\\
		&=&
\frac{(2d-2)\kappa_{2d-2}}{\sqrt{1-\bbet^2}}\int\limits_{-\de}^{\de}
		\int\limits_{0}^{(\de^2-x^2_d)^{\frac{1}{4}}}
			\left(\sqrt{\de^2-x_d^2}-r^2\right) r^{2d-3}\,
		dr\,dx_d =\dfrac{h_{d+1}\,\kappa_{2d-2}}{d\sqrt{v_{\boldsymbol 1,\bbet}}}\,\de^{d+1},\\
		\lambda_{2d-1} \left(\wt C(0;\de)\right)&=&
		\int_{\wt C(0;\de)} d\lambda_{2d-1}(z',x_d)
			=\frac{(2d-2)\kappa_{2d-2}}{\sqrt{1-\bbet^2}}\int\limits_{-\de}^{\de}
		\int\limits_{0}^{(\de^2-x^2_d)^{\frac{1}{4}}} r^{2d-3}\, dr\,dx_d
			= \frac{h_d\kappa_{2d-2}}{\sqrt{v_{\boldsymbol 1,\bbet}}}\de^{d}.
	\eeas
Thus, we have \eqref{E:modelcut} in the special case of $D_\bbet$.

To prove \eqref{E:modelcap} for $D_\bbet$, we observe that $\wt S(0;\de)=\wt C(0;\de)$, and for $(z',x_d)\in\wt C(0;\de)$, 
	\be\label{eq:grad_est}
		||\nabla f_\bbet(z',x_d)||^2=4\sum_{j=1}^{d-1}|z_j+\beta_j\zbar_j|^2
				\leq 2\sum_{j=1}^{d-1}(|z_j|^2+\beta_j\rea \zbar_j^2)
					<2\de.
	\ee
Thus, since $S(0;\de)$ is the graph of $f_\bbet$ over $\wt C(0;\de)$, we have that
\beas
		\sigma\big(S(0;\de)\big)=\int_{\wt C(0;\de)}
	\left(1+||\nabla f_\bbet(z',x_d)||^2\right)^{\frac{1}{2}}
		\, d\lambda_{2d-1}(z',x_d),
\eeas
and, therefore, 
\bes
\left|\sigma\left(S(0;\de)\right)-\lambda_{2d-1}\left(\wt C(0;\de)\right)\right|\leq \de\lambda_{2d-1}\left(\wt C(0;\de)\right)\ees
for all $\de>0$. Equation \eqref{E:modelcap} now follows from \eqref{E:projcut} in the special case of $D_\bbet$. 

For the general case, consider the map $A:(z_1,...,z_d)\mapsto (\sqrt{\alpha_1}z_1,...,\sqrt{\alpha_{d-1}}z_{d-1},z_d)$, which biholomorphically maps $D_{\balp,\bbet}$ onto $D_{\bbet/\balp}$, where
$\bbet/\balp=(\beta_1/\alpha_1,...,\beta_{d-1}/\alpha_{d-1})$. Now, invoking the transformation law for cuts and caps (see Lemma~\ref{L:deffn}), and noting that $A$ and $\left(A|_{\C^{d-1}\times\rl}\right)$ are diagonal transformations, we obtain that for all $\de>0$,
	\beas	
	&&\lambda\big(C^{D_{\balp,\bbet}}(0;\de)\big)
	=\left(\prod_{j=1}^{d-1}\alpha_j^{-1}\right)\lambda\big(C^{D_{\bbet/\balp}}(0;\de)\big),\\
	&& \left|\sigma(S^{D_{\balp,\bbet}}(0;\de))-\left(\prod_{j=1}^{d-1}\alpha_j^{-1}\right)
	\lambda_{2d-1}\big(S^{D_{\bbet/\balp}}(0;\de)\big)\right|\leq
\de\left(\prod_{j=1}^{d-1}\alpha_j^{-1}\right)\lambda_{2d-1}\big(S^{D_{\bbet/\balp}}(0;\de)\big). 
	\eeas
\end{proof}

\begin{theorem}\label{T:modelcomp2} Let $D_{\balp,\bbet}$ be as above, $\de>0$, and $t\in[0,1)$.Then, there is a $C_{\balp,\bbet}>0$ such that for $\de<\frac{1}{2\max_j\alpha_j}$,
	\beas
	 \frac{\kappa_{2d-2}}{\sqrt{v_{\balp,\bbet}}}h_d(t)\de^d\leq \sigma(G_{t\de}(0;\de))\leq 
		 \frac{\kappa_{2d-2}}{\sqrt{v_{\balp,\bbet}}}h_d(t)\de^d+C_{\balp,\bbet}\de^{d+1},
	\eeas
where $h_d(t)=2\int_0^{\sin^{-1}\sqrt{1-t^2}}(\cos\theta-t)^{d-1}\cos\theta\, d\theta$. 
\end{theorem}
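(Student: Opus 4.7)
The plan is to derive an explicit integral formula for $\sigma(G_{t\de}(0;\de))$ from the defining function of $D_{\balp,\bbet}$ and then bound it from below and above by separate arguments. Taking the defining function $\rho(z) = f_{\balp,\bbet}(z') - y_d$ one has $\hat\eta(0) = (0,\ldots,0,i)$ in complex coordinates; a direct calculation of $L(s\hat\eta(0),w) = \langle\bdy\rho(w)/\|\bdy\rho(w)\|,\, w - s\hat\eta(0)\rangle$ for a boundary point $w = (w', u_d + iv_d)$ with $v_d = f_{\balp,\bbet}(w')$ yields
\[ |L(s\hat\eta(0),w)| = \frac{1}{n^{\balp,\bbet}(w')}\sqrt{(f_{\balp,\bbet}(w') + s)^2 + \Big(u_d + 2\sum_{j=1}^{d-1}\beta_j\ima w_j^2\Big)^2}, \]
where $n^{\balp,\bbet}(w')^2 = 1 + 4\sum_j|\alpha_j\bar w_j + \beta_j w_j|^2$. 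Since $f_{\balp,\bbet}\geq 0$, this quantity is nondecreasing in $s\geq 0$, so the condition $|L(s\hat\eta(0),w)|\leq \de$ for all $s\in[0,t\de]$ reduces to its value at $s = t\de$. Viewing $bD_{\balp,\bbet}$ as the graph of $f_{\balp,\bbet}$ over $\C^{d-1}\times\rl$ and integrating out $u_d$ (whose allowed range for fixed $w'$ is an interval of length $2\sqrt{\de^2 (n^{\balp,\bbet})^2 - (f_{\balp,\bbet}+t\de)^2}$ centered at $-2\sum_j\beta_j\ima w_j^2$) produces
\[ \sigma(G_{t\de}(0;\de)) = \int_{\Omega(\de,t)} \sqrt{1+\|\nabla f_{\balp,\bbet}\|^2}\cdot 2\sqrt{\de^2(n^{\balp,\bbet})^2 - (f_{\balp,\bbet}+t\de)^2}\,\md\lambda_{2d-2}(w'), \]
with $\Omega(\de,t) := \{w'\in\C^{d-1} : f_{\balp,\bbet}(w') + t\de \leq \de\,n^{\balp,\bbet}(w')\}$.

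For the lower bound, I would restrict to the smaller region $\Omega_0 := \{f_{\balp,\bbet}\leq (1-t)\de\}\subseteq\Omega(\de,t)$, where $n^{\balp,\bbet}\geq 1$ and $\sqrt{1+\|\nabla f\|^2}\geq 1$ immediately give
\[ \sigma(G_{t\de}(0;\de)) \geq \int_{\Omega_0} 2\sqrt{\de^2 - (f_{\balp,\bbet}+t\de)^2}\,\md\lambda_{2d-2}(w'). \]
Since $f_{\balp,\bbet}$ is a positive-definite quadratic form on $\rl^{2d-2}$ with determinant $v_{\balp,\bbet}$, one has $\lambda_{2d-2}\{f_{\balp,\bbet}\leq s\} = \kappa_{2d-2}s^{d-1}/\sqrt{v_{\balp,\bbet}}$; the coarea formula then converts the above into a one-dimensional integral in $s = f_{\balp,\bbet}(w')$, and the substitution $s + t\de = \de\cos\theta$ followed by a single integration by parts (using $\tfrac{d}{d\theta}(\cos\theta - t)^{d-1} = -(d-1)\sin\theta(\cos\theta - t)^{d-2}$, whose boundary term vanishes at $\theta = \cos^{-1}t$) produces exactly $\kappa_{2d-2}h_d(t)\de^d/\sqrt{v_{\balp,\bbet}}$, giving the lower bound.

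For the upper bound, the clean move is to rescale $w_j = \sqrt{\de}\,\tilde w_j$ for $j < d$. By the degree-two homogeneity of $f_{\balp,\bbet}$ and of $\sum\beta_j\ima w_j^2$, this produces $f_{\balp,\bbet}(w') + t\de = \de(f_{\balp,\bbet}(\tilde w') + t)$, $(n^{\balp,\bbet}(w'))^2 - 1 = O(\de|\tilde w'|^2)$, $\|\nabla f_{\balp,\bbet}(w')\|^2 = O(\de|\tilde w'|^2)$, and $\md\lambda_{2d-2}(w') = \de^{d-1}\md\lambda_{2d-2}(\tilde w')$. Crucially, positive-definiteness $f_{\balp,\bbet}(\tilde w')\geq c_{\balp,\bbet}|\tilde w'|^2$ together with the hypothesis $\de < 1/(2\max_j\alpha_j)$ forces $|\tilde w'|^2 \leq C_{\balp,\bbet}$ uniformly in $\de$ and in $t\in[0,1)$ on the rescaled domain, so each $O(\de|\tilde w'|^2)$ error is a genuine $O(\de)$. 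Dominating the integrand by $(1+C\de)\cdot 2\sqrt{1+C\de - (f_{\balp,\bbet}+t)^2}$ on $\{(f_{\balp,\bbet}+t)^2\leq 1+C\de\}$ and then substituting $\tilde s = s/\sqrt{1+C\de}$, $\tilde t = t/\sqrt{1+C\de}$ in the resulting coarea integral evaluates the upper bound as $(1+C\de)^{d/2}\kappa_{2d-2}h_d(t/\sqrt{1+C\de})/\sqrt{v_{\balp,\bbet}}$. A differentiation under the integral sign (with the boundary contribution vanishing because $\cos\theta - t = 0$ at $\theta = \cos^{-1}t$) shows that $h_d$ is Lipschitz on $[0,1]$ with constant $(d-1)\pi$, so $h_d(t/\sqrt{1+C\de}) = h_d(t) + O(\de)$ and the upper bound becomes $\kappa_{2d-2}h_d(t)\de^d/\sqrt{v_{\balp,\bbet}} + O(\de^{d+1})$.

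The main obstacle is the upper bound: in the original coordinates the square-root integrand degenerates on the boundary of $\Omega(\de,t)$, and the $t\de$-shift inside $(f_{\balp,\bbet}+t\de)^2$ blocks any direct scaling of the defining inequality, so naive pointwise estimates on $n^{\balp,\bbet}-1$ produce errors of order $\de^{d+1/2}$ rather than $\de^{d+1}$. The $\sqrt\de$-rescaling is the device that moves the entire estimate to an $O(1)$ domain on which all perturbative corrections are uniformly $O(\de)$ and the Lipschitz estimate on $h_d$ closes the argument uniformly in $t\in[0,1)$.
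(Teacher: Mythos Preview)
Your argument is correct and follows essentially the same route as the paper: both reduce the visibility region to the set $\{(t\de+f_{\balp,\bbet})^2+(x_d+q_2)^2\le \de^2(n^{\balp,\bbet})^2\}$, bound $(n^{\balp,\bbet})^2-1$ by $O(\de)$ on that set (you via the $\sqrt{\de}$-rescaling and bounded rescaled domain, the paper via $q_3\le 2Aq_1$), sandwich by sets of the form $K_{t\de,\eta}$, evaluate the resulting integral as $\kappa_{2d-2}v_{\balp,\bbet}^{-1/2}\eta^d h_d(t\de/\eta)$, and close with the Lipschitz bound $h_d'=-(d-1)h_{d-1}$. The only organizational difference is that you integrate out $x_d$ first and then use the coarea formula on the level sets of $f_{\balp,\bbet}$ (recovering $h_d$ after a short integration by parts), whereas the paper keeps $x_d$ and lands directly on the defining integral for $h_d$ via the change of variables $\xi_j=\sqrt{\alpha_j+\beta_j}\,\rea z_j+i\sqrt{\alpha_j-\beta_j}\,\ima z_j$, $\tau=x_d+q_2$.
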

\begin{proof} Since $G_{t\de}(0;\de)\subset bD$, it is the graph of $f_{\balp,\bbet}$ over  $\wt G_{t\de}(0;\de)$, which is given by 
	\beas
\left\{(z',x_d)\in \C^{d-1}\times\rl:(t\de+q_1(z'))^2+\left(x_d+q_2(z')\right)^2<\de^2(1+4q_3(z'))\right\},
	\eeas
where $q_1(z')~=~\sum_{j=1}^{d-1}\left(\alpha_j|z_j|^2+\beta_j\rea\zbar_j^2\right)$, $q_2(z')=2\sum_{j=1}^{d-1}\beta_j\ima z_j^2$, and $q_3(z')= \sum_{j=1}^{d-1}|\alpha_jz_j+\beta_j\zbar_j|^2$. Note that $q_1,q_2,q_3\geq 0$. 

Now, setting $A=\max_j \alpha_j$ and recalling that $\beta_j<\alpha_j$, $j=1,...,d-1$, we have that $q_3(w')\leq 2Aq_1(w')$. Thus, if $z\in G_{t\de}(0;\de)$,
	\bes
		q_3(z')^2\leq 4A^2\de^2(1+4q_3(z')),
	\ees
i.e., $q_3(z')\leq 2A\de+16A^2\de^2$. Thus, for $\de<1/(2A)$, $K_{t\de,\de}	\subseteq\wt G_{t\de}(0;\de)
			\subseteq K_{t\de,\de(1+8A\de)}$,
where,
	\bes
		K_{\eps,\eta}=\left\{(z',x_d)\in\C^{d-1}\times\rl:
		(\eps+q_1(z'))^2+\left(x_d+q_2(z')\right)^2<\eta^2\right\},\quad 0<\eps<\eta.
	\ees
Consequently, for $\de<1/(2A)$, 
	\be\label{eq:areabounds}
		\int\limits_{K_{t\de,\de}}d\lambda_{2d-1}(z',x_d)\leq 
	\int\limits_{\wt G_{t\de}(0;\de)}\sqrt{1+4q_3(w')}\,d\lambda_{2d-1}(z',x_d)
		\leq\int\limits_{K_{t\de,\de+8A\de^2}} (1+8A\de)d\lambda_{2d-1}(z',x_d),
	\ee
where the term in the center is $\sigma\big(G_{t\de}(0;\de)\big)$. It now suffices to compute $\lambda_{2d-1}(K_{\eps,\eta}\big)$. Now, setting 
	$\xi_j=\sqrt{\alpha_j+\beta_j}\, \rea z_j+i\sqrt{\alpha_j-\beta_j}\, \ima z_j$, $1\leq j\leq d-1$, and $\tau=x_d+2\sum_{j=1}^{d-1}\beta_j\ima w_j^2$,
we obtain that 
\beas
	\lambda_{2d-1}(K_{\eps,\eta})&=&\frac{1}{v_{\balp,\bbet}}\lambda_{2d-1}
		\left\{(\xi,\tau)\in\C^{d-1}\times\rl:(\eps+||\xi||^2)^2+\tau^2<\eta^2
			\right\}\\
	&=&\frac{2\kappa_{2d-2}}{\sqrt{v_{\balp,\bbet}}}\int\limits_{0}^{\sqrt{\eta^2-\eps^2}}
			(\sqrt{\eta^2-\tau^2}-\eps)^{d-1}\,d\tau
	=\frac{\kappa_{2d-2}}{\sqrt{v_{\balp,\bbet}}}\,\eta^{d} h_d\left(\frac{\eps}{\eta}\right),
	\eeas
where $h_d(t)=2\int_0^{\sin^{-1}\sqrt{1-t^2}}(\cos\theta-t)^{d-1}\cos\theta \,d\theta$. 

A few comments about $h_d$ are in order. Note that $h_1(t)=2\sqrt{1-t^2}$, $t\in[0,1]$. When $d\geq 2$, $h_d$ is a decreasing Lipschitz function on $[0,1]$ with $h_d(1)=0$. This follows from observing that $h'_d=-(d-1)h_{d-1}(t)$, which is negative and bounded (in absolute value) on $(0,1)$. Thus, for any $t\in(0,1)$, there is an $r\in(t(1+8A\de)^{-1},t)$ such that
	\bes
		h_d\left(\frac{t\de}{\de+8A\de^2}\right)
		=h_d(t)-h_d'(r)\frac{8At\de}{1+8A\de}
		\leq h_d(t)+(d-1)h_{d-1}(0)\frac{8At\de}{1+8A\de}.
	\ees
Combining this with \eqref{eq:areabounds} gives that
	\beas
		\frac{\kappa_{2d-2}}{\sqrt{v_{\balp,\bbet}}} h_d(t) \de^{d}\leq
			 \sigma\big(G_{t\de}(0;\de)\big)&\leq&			
	\frac{\kappa_{2d-2}}{\sqrt{v_{\balp,\bbet}}} \de^{d}(1+8A\de)^d\left(h_d(t)+(d-1)h_{d-1}(0)\frac{8At\de}{1+8A\de}\right),
		\eeas
which yields our claim. 
\end{proof}
\section*{Acknowledgements}
SA's research was partially supported by CPDA from the Indian Statistical Institute and Infosys Chair Professor position at the Chennai Mathematical Institute, Chennai.  PG was partially supported by the Infosys Young Investigator Award. DY's research was partially supported by SERB-MATRICS Grant MTR/2020/000470 and CPDA from the Indian Statistical Institute.   DY is thankful to Matthias Reitzner for comments on the literature of random polytopes and to Srikanth Iyer for discussions related to Conjecture \ref{conj}.  DY would also like to thank Joe Yukich for very helpful comments on a first draft of the article.  

\bibliographystyle{plainnat}
\bibliography{random_poly_rc-spy}

\end{document}